 \definecolor{dark-red}{rgb}{0.4,0.15,0.15}
\newcommand{\NN}{\mathbb{N}}
\DeclareMathOperator{\Tr}{Tr}
\newcommand{\End}{\operatorname{End}}
\DeclareMathOperator{\Cat}{Cat}
\newcommand{\Sub}{\operatorname{Sub}}
\numberwithin{equation}{section} 
\theoremstyle{plain}
\newaliascnt{theorem}{equation}  
\newtheorem{theorem}[theorem]{Theorem}  
\newtheorem*{theorem*}{Theorem}
\newaliascnt{dodeca}{equation}  
 \theoremstyle{definition}
\newaliascnt{prop}{equation}  
\newtheorem{prop}[prop]{Proposition}
\newaliascnt{lemma}{equation}  
\newtheorem{lemma}[lemma]{Lemma}
\newaliascnt{corollary}{equation}  
\newtheorem{corollary}[corollary]{Corollary}
\newaliascnt{claim}{equation}  
\newaliascnt{conjecture}{equation}  
\newaliascnt{question}{equation}  
\newaliascnt{defn}{equation}  
\newtheorem{defn}[defn]{Definition}
\newaliascnt{example}{equation}  
\newtheorem{example}[example]{Example}
\theoremstyle{remark}
\newaliascnt{remark}{equation}  
\newtheorem{remark}[remark]{Remark}
\newaliascnt{convention}{equation}  
\theoremstyle{plain}
\begin{document}
\title[Transfer systems for rank two elementary Abelian groups]{Transfer systems for rank two elementary Abelian groups: characteristic functions and matchstick games}

\author[Bao]{Linus Bao}
\author[Hazel]{Christy Hazel}
\author[Karkos]{Tia Karkos}
\author[Kessler]{Alice Kessler}
\author[Nicolas]{Austin Nicolas}
\author[Ormsby]{Kyle Ormsby}
\author[Park]{Jeremie Park}
\author[Schleff]{Cait Schleff}
\author[Tilton]{Scotty Tilton}

\begin{abstract}
We prove that Hill's characteristic function $\chi$ for transfer systems on a lattice $P$ surjects onto interior operators for $P$. Moreover, the fibers of $\chi$ have unique maxima which are exactly the saturated transfer systems. In order to apply this theorem in examples relevant to equivariant homotopy theory, we develop the theory of saturated transfer systems on modular lattices, ultimately producing a ``matchstick game'' that puts saturated transfer systems in bijection with certain structured subsets of covering relations. After an interlude developing a recursion for transfer systems on certain combinations of bounded posets, we apply these results to determine the full lattice of transfer systems for rank two elementary Abelian groups.
\end{abstract}

\maketitle

\setcounter{tocdepth}{1}
\tableofcontents

\section{Introduction}\label{sec:intro}
\subsection{Motivation, context, and results}
In topology, there is a unique $E_\infty$ operad up to homotopy. Moreover, localization of ring spectra preserves $E_\infty$-structures. These simple facts underpin derived --- or, more precisely, spectral --- algebraic geometry. Indeed, it is very hard to mimic algebraic geometry without a well-behaved theory of localization.

Fix a finite group $G$. In $G$-equivariant topology, Blumberg and Hill \cite{BlumbergHill} have introduced the notion of a $G$-$N_\infty$ operad. These are operads in $G$-spaces encoding homotopy coherent ring structure and compatible families of \emph{norms}: multiplicative wrong-way maps $N_H^G R\to R$ where $H$ is a subgroup of $G$ and $N_H^G$ is left adjoint to the forgetful functor from $G$-spectra to $H$-spectra. These norms are a fundamental feature of $G$-ring spectra and are essential to the computational work following from the Hill--Hopkins--Ravenel resolution of the Kervaire invariant one problem \cite{hhr}.

Unlike the situation in classical topology, there are \emph{many} homotopically distinct $G$-$N_\infty$ operads and localization of $G$-ring spectra does \emph{not} preserve $N_\infty$-structure. This makes the prospect of equivariant derived algebraic geometry both daunting and excitingly new.

Given this challenge, there has been significant recent interest in uncovering the structure of the category of $G$-$N_\infty$ operads \cite{gw18,Rubin,bp21,bbr,fooqw,hmoo,bmo:CD,lifting}. The homotopy category of $G$-$N_\infty$ operads is equivalent to the category of so-called $G$-transfer systems (see \autoref{defn:tr}). These are combinatorial structures on the subgroup lattice of $G$ which can be investigated with traditional enumerative, algebraic, and asymptotic methods.

In this work, we determine the structure of the partially ordered set (in fact, lattice) of $G$-transfer systems for $G$ a rank $2$ elementary Abelian group (\emph{i.e.}, $G\cong C_p\times C_p$ for $C_p$ a cyclic group of prime order $p$):

\begin{theorem*}[\autoref{thm:ranktwo}]
Let $p$ be a prime number. There are exactly $2^{p+2}+p+1$ transfer systems for $C_p\times C_p$, and the lattice of transfer systems has an explicit decomposition in terms of two Boolean lattices on $p+1$ elements along with $p+1$ intermediate elements (see \autoref{prop:interatedfusion} for lattice structure details).
\end{theorem*}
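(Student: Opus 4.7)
The plan is to apply the three tools developed earlier in the paper --- the characteristic function surjection onto interior operators, the matchstick game for saturated transfer systems on modular lattices, and the recursion for transfer systems on combinations of bounded posets --- to the subgroup lattice $P$ of $G = C_p \times C_p$. This lattice is the diamond $M_{p+1}$: a bottom $e$, a top $G$, and $p+1$ atoms $H_1, \ldots, H_{p+1}$ enumerating the order-$p$ subgroups (equivalently, the lines in $\mathbb{F}_p^2$). Crucially $M_{p+1}$ is modular, so the matchstick game applies.

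First I would enumerate the saturated transfer systems on $P$ by running the matchstick game on $M_{p+1}$. The $2(p+1)$ covering relations split into $p+1$ lower covers $e \lessdot H_i$ and $p+1$ upper covers $H_i \lessdot G$, and a matchstick configuration is a subset of these satisfying the rules imposed by modularity. Because $H_i \wedge H_j = e$ and $H_i \vee H_j = G$ for all $i \neq j$, selecting two or more upper covers should force the inclusion of every lower cover, and dually. I therefore expect the legal configurations to be indexed by pairs $(U, L)$ of subsets of $\{1, \ldots, p+1\}$ subject to the compatibility condition ``$|U| \geq 2 \Rightarrow L = \{1, \ldots, p+1\}$'' and its dual. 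Enumerating the solutions to these constraints yields the saturated transfer systems explicitly.

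Next I would pass from saturated to arbitrary transfer systems via the fiber theorem of the introduction. Each fiber of $\chi$ has a unique saturated maximum, so the total count of transfer systems is the sum over saturated $T$ of $|\chi^{-1}(\chi(T))|$. The recursion for transfer systems on combinations of bounded posets lets me reduce the fiber calculations on $M_{p+1}$ to those on the two ``half-fans'' $e \leq H_i$ and $H_i \leq G$, whose transfer systems are easily enumerated and whose collections of transfer systems should recover the two Boolean lattices of rank $p+1$ named in the theorem. Gluing via the recursion then accounts for the $p+1$ intermediate elements interpolating between the two Boolean strata: these arise from configurations in which exactly one upper cover and exactly one lower cover (necessarily through the same atom $H_i$) are selected. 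Summing yields $2\cdot 2^{p+1} + (p+1) = 2^{p+2} + p + 1$.

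The main obstacle is not the count but verifying the explicit lattice structure promised by \autoref{prop:interatedfusion}. Computing joins and meets of transfer systems requires tracking how interior operators glue across the two half-fans under the recursion, and confirming that the $p+1$ intermediate transfer systems sit between the two Boolean strata in exactly the prescribed pattern. I would address this by giving generator-and-relation descriptions of meets and joins in terms of matchstick data on the saturated maximum together with interior-operator data on each half, and then matching them against the statement of \autoref{prop:interatedfusion}. A useful sanity check at the end is to verify the small cases $p = 2, 3$ against existing enumerations in the literature cited in the introduction.
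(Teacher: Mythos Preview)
Your route is genuinely different from the paper's and, as written, has a real gap.

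\textbf{What the paper actually does.} The proof of \autoref{thm:ranktwo} does not use the matchstick game or the fibers of $\chi$ at all. It identifies $\Sub(C_p\times C_p)\cong[2]^{*(p+1)}$ and applies the fusion recursion (\autoref{thm:fusion}) once to $[2]^{*n}*[2]$. The four terms are computed directly: $[2]^{*n}\smallsetminus\{\top\}$ and $[2]^{*n}\smallsetminus\{\perp\}$ are fans with $2^n$ transfer systems each, the non-extremal minimal-fibrant terms contribute $n+1$, and the formula $2^{n+2}+n+1$ drops out. The lattice structure is then described by hand. The characteristic function and matchstick game appear only in a closing example illustrating this structure, not in the proof.

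\textbf{The gap in your matchstick step.} Your constraint ``$|U|\ge 2\Rightarrow L=\{1,\ldots,p+1\}$ and its dual'' is wrong: the saturated-cover axioms on $M_{p+1}$ are \emph{not} self-dual. Condition~(1) says an upper cover $H_i\to G$ forces $e\to H_j$ for every $j\ne i$, but no axiom forces upper covers from lower ones; in particular $U=\varnothing$, $L=\{1,\ldots,p+1\}$ is a legal saturated cover (it is $\max B$ in the paper's notation). The correct case analysis is: $U=\varnothing$ with $L$ arbitrary ($2^{p+1}$ covers); $|U|=1$ with $U=\{i\}$ forcing $L=\{1,\ldots,p+1\}\smallsetminus\{i\}$ by restriction plus 3-out-of-4 ($p+1$ covers); and $U=L$ full ($1$ cover). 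Your description of the $p+1$ intermediate elements as ``exactly one upper cover and exactly one lower cover through the same atom'' is therefore also wrong --- each has one upper cover through $H_i$ and $p$ lower covers through the \emph{other} atoms.

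\textbf{The gap in your fiber step.} The fusion theorem counts $|\Tr(P*Q)|$; it does not compute fibers of $\chi$, so ``reduce the fiber calculations to the two half-fans via the recursion'' does not parse. What you would actually need to show directly is that every saturated cover except the full one is already minimal in its $\chi$-fiber (hence a singleton), while the full one tops a fiber of size $2^{p+1}$ consisting of all transfer systems with $\perp$ related to everything. That is easy once stated, but it is a separate argument from the recursion.

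With those two fixes your approach does yield $2^{p+1}+(p+1)+2^{p+1}=2^{p+2}+p+1$, giving an alternative proof. The paper's direct use of \autoref{thm:fusion} is shorter and avoids both detours.
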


This theorem represents only the fourth infinite class of groups whose transfer systems have been determined in either closed or recursive form. Previously, Balchin--Barnes--Roitzheim \cite{bbr} proved that $\Tr(C_{p^n})$ is isomorphic to the Tamari lattice with $\Cat(n+1) = \frac{1}{n+2}\binom{2n+2}{n+1}$ elements. In \cite{bmo:CD}, Balchin--MacBrough--Ormsby give a recursion for transfer systems on dihedral groups of order $2p^n$ and cyclic groups of order $qp^n$, $p\ne q$ primes; they do not give a closed formula for the number of transfer systems, and they do not investigate the lattice structure.

While it is likely possible to prove the above theorem from first principles, we take a roundabout route that allows us to expose and develop several important structural properties of transfer systems in general. These split into three categories: characteristic functions, saturated covers on modular lattices, and categorical properties of transfer systems.

The idea of a characteristic function for lattices is due to Hill, and may be viewed as a relativization of the notion of minimal fibrancy exploited in \cite{bmo:CD}. Let $P$ be a finite lattice. There is a simple extension of the notion of a transfer system from a subgroup lattice to an arbitrary lattice (see \autoref{defn:tr}). In our formulation, the characteristic function takes the form of an antitone (order-reversing) map
\[
  \chi\colon \Tr(P)\longrightarrow \End(P)
\]
where $\Tr(P)$ is the lattice of transfer systems on $P$, and $\End(P)$ is monotone endomorphisms of $P$. (See \autoref{sec:chi} for details on the construction.)

Let $\End^\circ(P)\subseteq \End(P)$ denote the set of interior operators on $P$, \emph{i.e.}, idempotent, contractive, monotone maps $P\to P$. We prove the following:

\begin{theorem*}[\autoref{thm:maxsat} and \autoref{thm:minchi}]
Let $P$ be a finite lattice. Hill's characteristic function $\chi$ has image $\End^\circ(P)$, and each fiber of $\chi$ is an interval $[R,R'] = \{R''\in \Tr(P)\mid R\le R''\le R'\}$ in $\Tr(P)$. The set of maximal elements of fibers of $\chi$ is exactly the set of saturated transfer systems.
\end{theorem*}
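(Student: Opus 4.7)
The plan is to split the theorem into four claims and tackle them in order: (a) the image of $\chi$ lies in $\End^\circ(P)$; (b) $\chi$ is surjective onto $\End^\circ(P)$ via a canonical preimage construction; (c) each fiber $\chi^{-1}(f)$ is an interval in $\Tr(P)$; and (d) the maxima of fibers are exactly the saturated transfer systems.

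For (a), I would unpack Hill's definition of $\chi(R)$ and verify the three properties of an interior operator (monotonicity, contractiveness, idempotence) directly from the transfer-system axioms (refinement of $\le$, reflexivity/transitivity, and restriction stability). Monotonicity and contractiveness should follow essentially from the construction; idempotence is the subtler point and should exploit transitivity together with restriction stability, since iterating $\chi(R)$ corresponds to composing $R$-relations which remain inside $R$.

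For (b) and (c), a natural candidate preimage for a given interior operator $f$ is the tautological relation $R_f$ defined by $y \mathrel{R_f} x$ iff $y \le f(x)$. The three transfer-system axioms for $R_f$ should line up cleanly with the three defining properties of $f$: idempotence of $f$ yields transitivity of $R_f$, monotonicity yields restriction stability, and contractiveness ensures refinement of $\le$. A short calculation should then recover $\chi(R_f)=f$, proving (b). For (c), antitonicity of $\chi$ gives $\chi(R_1\vee R_2)\le f$ whenever $R_1,R_2$ lie in the fiber over $f$; the reverse inequality should come directly from the definition of $\chi$, which only decreases when genuinely new data is introduced. A dual argument handles meets, so each fiber is closed under both operations and is therefore an interval.

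The main obstacle is (d). My plan is to identify the fiber maximum $R_{\max}$ of $\chi^{-1}(f)$ with the tautological preimage $R_f$ from step (b), and then to show that this coincides with the saturation of any $R$ in the fiber. In one direction, verify saturation of $R_{\max}$ axiomatically: any relation demanded by the saturation condition must already be present in $R_{\max}$, since otherwise adjoining it would yield a strictly larger transfer system still in the same fiber, contradicting maximality. In the other direction, if $R$ is saturated but strictly below $R_{\max}$ in its fiber, produce a relation in $R_{\max}\setminus R$ and use the saturation hypothesis together with the explicit description of $R_{\max}$ to derive the contradiction that the relation was already in $R$. The matching between saturation (a closure condition phrased in terms of cover relations and joins in $P$) and the algebraic structure of the tautological $R_f$ is the technical heart of the argument and is where I expect most of the effort to be spent, likely requiring a case analysis indexed by covering relations in $P$.
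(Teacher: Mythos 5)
Your step (b) as written does not work, and the error propagates into (c) and (d). The relation $R_f$ defined by $y \mathrel{R_f} x$ iff $y\le f(x)$ has the inequality pointing the wrong way. First, its characteristic function cannot be $f$: since $\perp\le f(x)$ for every $x$, we would get $\chi^{R_f}(x)=\min\{y\mid y\le f(x)\}=\perp$ for all $x$, i.e.\ the constant-$\perp$ operator rather than $f$. Second, $R_f$ is not in general a transfer system, contrary to your claim that monotonicity yields restriction stability: in the divisor lattice of $12$ with interior operator $f$ whose fixed points are $\{1,4,12\}$ (so $f(12)=12$, $f(6)=1$), we have $4\mathrel{R_f}12$ and $6\le 12$, but $4\wedge 6=2$ and $2\not\le f(6)=1$, so the restriction $(2,6)$ is missing. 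Closing up under restriction then adds relations $(y\wedge z, z)$ with $y\wedge z$ strictly below $f(z)$, which destroys the identity $\chi^{\langle R_f\rangle}=f$. The same misidentification undermines (d), where you propose $R_{\max}=R_f$.

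The fix is to reverse the inequality: set $y\mathrel{R_f}x$ iff $f(x)\le y\le x$. Your three correspondences then actually hold (restriction follows from $f(z)\le f(x)\wedge z$ for $z\le x$, transitivity from idempotence plus monotonicity), one checks directly that $\chi^{R_f}=f$ and that $R_f$ is saturated, and every $R$ in the fiber over $f$ satisfies $R\le R_f$ by definition of $\chi$, so $R_f$ is the fiber maximum; the sandwich $R\le R_1\vee R_2\le R_f$ then gives join-closure of fibers without further work, and a short argument using restriction plus the 2-out-of-3 condition shows any saturated member of the fiber already contains $R_f$. With that correction your route is genuinely different from, and in places cleaner than, the paper's: the paper instead generates the \emph{minimal} preimage from the sparse set $\{(f(x),x)\mid x\in P\}$ (its \autoref{thm:interior} and \autoref{thm:minchi}), proves join-closure of fibers by analyzing transitive closures via Rubin's generation theorem (\autoref{lem:chijoins}), and identifies the maximum with the saturated hull by a separate contradiction argument (\autoref{thm:maxsat}); no case analysis over covering relations is needed in either version. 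As written, however, the proposal has a genuine gap at its central construction.
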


Here \emph{saturated transfer systems} (\autoref{defn:sat}) are transfer systems satisfying an additional 2-out-of-3 condition. They have appeared previously in the literature because the transfer system associated with any linear isometries operad is saturated \cite[Proposition 5.1]{Rubin}. (It is not the case, though, that every saturated transfer system is realized by some linear isometries operad.) The above theorem highlights the important structural role that saturated transfer systems play independent of their affiliation with linear isometries.

The above theorem places saturated transfer systems in bijection with interior operators, well-known objects of study from order theory which are cryptomorphically equivalent to several other structures (submonoids of $(P,\vee)$ and comonads on $P$, to name a couple). It is the authors' hope that the transfer system perspective on interior operators will lead to new insights in this classical subject.

Saturated transfer systems have been enumerated on rectangular lattices by Hafeez--Marcus--Ormsby--Osorno \cite{hmoo}. They reduce their study to a ``matchstick game'' consisting of collections of covering relations satisfying certain rules. We generalize this method to all modular lattices, \emph{i.e.}, lattices satisfying the modular law
\[
  a\le b\implies a\vee(x\wedge b) = (a\vee x)\wedge b.
\]
Such lattices are of particular interest since subgroup lattices of Abelian groups are modular. In this setting, we call ``matchstick games'' \emph{saturated covers} (\autoref{defn:satcov}) and prove that they are in bijection with saturated transfer systems (\autoref{thm:matchstick}).

Our final main contribution in this paper is to study some categorical properties of transfer systems. Given finite lattices $P,Q$, we may form their \emph{fusion} $P*Q$ (\autoref{defn:fusion}) which, loosely speaking, takes a disjoint union of posets and then glues extremal elements together. \autoref{thm:fusion} enumerates transfer systems on $P*Q$ in terms of (subposets of) $P$ and $Q$. If $[2] = \{0<1<2\}$, then the sugroup lattice of $C_p\times C_p$ is isomorphic to the iterated fusion $[2]^{*(p+1)}$ of $[2]$ with itself, and this leads to the enumeration of \autoref{thm:ranktwo}.

\subsection{Transfer systems}

For reference throughout the rest of this document, we now recall the basic definitions and theorems on which we build. The reader familiar with transfer systems may easily skip this subsection and refer back if needed. We assume throughout that the reader is familiar with the basics of partially ordered sets and lattices; the textbook \cite{davey_priestley_2002} may act as a handy reference.

\begin{defn}\label{defn:tr}
Let $(P,\le)$ be a finite lattice. A \emph{transfer system} on $P$ is a partial order $R$ on $P$ that \emph{refines} $\le$ (\emph{i.e.}, $x~R~y\implies x\le y$) and is closed under restriction:
\[
  x~R~z\text{ and }y\le z\implies (x\wedge y)~R~y.
\]
If $G$ is a finite group, then a \emph{$G$-transfer system} is a transfer system on $\Sub(G)$, the subgroup lattice of $G$, which is further closed under conjugation. We write $\Tr(P)$ for the collection of transfer systems on $P$, and $\Tr(G)$ for the collection of $G$-transfer systems.
\end{defn}

In categorical language, we see that a transfer system is the same thing as a wide subcategory of (the category induced by) $P$ that is closed under pullbacks.

There is a natural refinement order on transfer systems where $R\le R'$ if and only if $x~R~y\implies x~R'~y$. Under this order, $(\Tr(P),\le)$ is a finite poset that admits meets: if $R,R'\in \Tr(P)$, then $R\wedge R'$ is given by intersecting the relations in $R$ and $R'$ so that $x~(R\wedge R')~y$ if and only if both $x~R~y$ and $x~R'~y$. Since $\Tr(P)$ is a finite poset admitting meets that has a greatest element (namely, the original partial order on $P$), $\Tr(P)$ also admits joins:
\[
  R\vee R' = \bigwedge_{R''\ge R,R'} R''.
\]
This yields the following proposition.
\begin{prop}
If $P$ is a finite lattice, then $\Tr(P)$ is a finite lattice under refinement. The same is true for $\Tr(G)$ when $G$ is a finite group.\hfill\qedsymbol
\end{prop}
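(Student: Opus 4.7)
The plan is to formalize the three-step argument sketched in the paragraph immediately preceding the proposition: refinement is a partial order on the finite set $\Tr(P)$, binary meets exist via intersection of relations, and the ambient order $\le$ on $P$ is itself a transfer system and so serves as a top element. Standard lattice theory (a finite meet-semilattice with top is a complete lattice) then supplies all joins via the displayed formula.

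The central verification is that for $R,R'\in\Tr(P)$, the pointwise intersection $R\cap R'\subseteq P\times P$ again lies in $\Tr(P)$. Reflexivity, antisymmetry, and transitivity of $R\cap R'$ are inherited from those of $R$ and $R'$, and refinement of $\le$ is immediate since $R\cap R'\subseteq R$. The only clause requiring attention is closure under restriction: if $x\,(R\cap R')\,z$ and $y\le z$, then applying the restriction axiom to $R$ gives $(x\wedge y)\,R\,y$, and applying it to $R'$ gives $(x\wedge y)\,R'\,y$, whence $(x\wedge y)\,(R\cap R')\,y$. This simultaneously establishes that intersection gives the meet in the poset of transfer systems under refinement.

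Finally I would observe that $\le$ itself qualifies as a transfer system, since $x\le z$ with $y\le z$ forces $x\wedge y\le y$ automatically, and $\le$ is by assumption a partial order refining itself. Combined with meets this yields a top element and hence a lattice, with joins given by the formula $R\vee R'=\bigwedge_{R''\ge R,R'}R''$ over a collection that is nonempty because it contains $\le$. For the group case, one only needs to note that conjugation-invariance is preserved by intersection and that the full subgroup order on $\Sub(G)$ is itself conjugation-invariant, so the identical argument applies verbatim to $\Tr(G)$. There is no genuine obstacle here: the proposition packages routine bookkeeping that is essentially stated in the surrounding prose, and the only point worth writing down carefully is that the restriction axiom survives intersection, which reduces to applying the axiom separately in each factor.
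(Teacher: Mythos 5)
Your proposal is correct and follows exactly the route the paper intends: the preceding paragraph of the paper already observes that meets are given by intersection of relations, that the ambient order $\le$ is itself a transfer system serving as a top element, and that joins then come for free from the displayed formula, which is why the proposition carries no separate proof. Your added verification that the restriction axiom survives intersection, and that conjugation-invariance does too in the group case, is precisely the routine bookkeeping the paper leaves implicit.
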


Our primary interest in transfer systems comes from their role in the theory of $N_\infty$ operads. These are operads $\mathscr O$ in $G$-spaces introduced and defined in \cite{BlumbergHill} and having the feature that for $H\le G\times \mathfrak{S}_n$, $\mathscr{O}(n)^H$ is either empty or contractible. Their basics are reviewed in \cite[\S 3]{fooqw}, and their connection to transfer systems is as follows:

\begin{theorem}[\cite{BlumbergHill,Rubin}]
Let $G$ be a finite group. There is a functor from the category of $G$-$N_\infty$ operads to (the category induced by) the lattice $\Tr(G)$ that induces an equivalence between the homotopy category of $G$-$N_\infty$ operads and $\Tr(G)$. The $G$-transfer system $R_{\mathscr O}$ induced by a $G$-$N_\infty$-operad $\mathscr{O}$ is such that $H~R_{\mathscr O}~K$ if and only if $\mathscr O$-algebras admit $K/H$-norms.\hfill\qedsymbol
\end{theorem}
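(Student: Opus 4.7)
The plan is to factor the claimed equivalence through the intermediate notion of an \emph{indexing system}: a coefficient system of finite $G$-sets closed under Cartesian products, disjoint unions, subobjects, and self-induction. The proof then splits into three natural steps, and each corresponds to a step in the literature cited for the theorem.

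First, I would construct the functor by recording, for each $N_\infty$ operad $\mathscr{O}$, the indexing system whose value at $H\le G$ consists of those finite $H$-sets $T$ with $\mathscr{O}(T)^H\ne\emptyset$ (equivalently contractible, by the $N_\infty$ hypothesis). Closure under products, disjoint unions, subobjects, and self-induction follows from operadic composition and the structure maps built into an operad in $G$-spaces; homotopy invariance is automatic because the assignment only records non-emptiness of fixed-point subspaces that are already known to be contractible when inhabited.

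Second, I would argue that this assignment descends to an equivalence between the homotopy category of $N_\infty$ operads and the poset of indexing systems. Essential surjectivity is achieved by an explicit Boardman--Vogt-style construction producing, for any prescribed indexing system $\mathcal{I}$, an $N_\infty$ operad whose admissible sets are exactly $\mathcal{I}$. Full faithfulness reduces to showing that the space of operad maps $\mathscr{O}\to\mathscr{O}'$ is contractible whenever the indexing system of $\mathscr{O}$ refines that of $\mathscr{O}'$ and empty otherwise; this is the technical heart of \cite{BlumbergHill} (see also the revisitations in \cite{gw18} and \cite{bp21}). Finally, I would invoke Rubin's translation between indexing systems and $G$-transfer systems, namely $H~R_{\mathcal I}~K\iff K/H$ is an admissible $K$-set in $\mathcal I$: the indexing-system closure axioms translate verbatim into the refinement, restriction-closure, and conjugation-closure axioms defining a $G$-transfer system, so this step is essentially an unpacking of definitions. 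Composing gives the desired functor and equivalence, and the interpretation of $H~R_{\mathscr{O}}~K$ in terms of $K/H$-norms is immediate from the operadic description of the norm $N_H^G$ via admissible $H$-sets on $G/H$.

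The main obstacle is the full-faithfulness assertion in the middle step: one must first commit to a precise homotopy theory of $G$-operads (a model structure on symmetric operads in $G$-spaces) in which mapping spaces between cofibrant $N_\infty$ operads are computable, and then verify that these mapping spaces have the correct connectivity as a function of the admissible-set data. This requires a careful cofibrancy and fixed-point analysis to ensure that obstructions to extending partial operad maps vanish exactly when the source's admissible sets are contained in the target's. With that piece in hand, the remaining categorical bookkeeping assembling the equivalence $\mathrm{Ho}(N_\infty\text{-operads})\simeq \Tr(G)$ is routine.
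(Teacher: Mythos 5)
This theorem is imported from the literature: the paper offers no proof of its own, only the citation to \cite{BlumbergHill} and \cite{Rubin}, so there is nothing internal to compare your argument against. Your outline is a faithful reconstruction of how the result is actually established in those sources --- the functor to indexing systems, the homotopical classification of $N_\infty$ operads by their admissible sets, and Rubin's dictionary between indexing systems and transfer systems --- and you correctly identify the realization/full-faithfulness step as the technical core. The only refinement worth noting is attributional: Blumberg--Hill constructed the functor and proved it is a fully faithful embedding of homotopy categories but only \emph{conjectured} essential surjectivity; the realization of every indexing system (equivalently, every transfer system) by an $N_\infty$ operad was completed independently by Rubin, Guti\'errez--White, and Bonventre--Pereira, so the ``Boardman--Vogt-style construction'' you invoke belongs to those later works rather than to \cite{BlumbergHill} itself.
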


It is also the case that transfer systems on a finite lattice $P$ are equivalent to weak factorization systems on (the category induced by) $P$. See \cite{fooqw} for details on this surprising connection between transfer systems and abstract homotopy theory. The article \cite{boor} leverages this to enumerate model structures on categories induced by finite total orders.

In order to graphically represent transfer systems, we adopt the convention of displaying \emph{all} of the relations in the transfer system with upward oriented (but unmarked) edges. This is quite different from the standard Hasse diagram presentation of posets, which only displays covering (\emph{i.e.}, minimal) relations. This is standard practice when discussing transfer systems as it is otherwise challenging to visually inspect the restriction condition. In practice, we first arrange $P$ in a Hasse diagram, then erase the covering relations in the diagram, and finally draw in all of the non-reflexive relations present in $R\in \Tr(P)$. For instance, in \autoref{fig:tamari} we see the transfer systems on $[2] = \{0<1<2\}$ arranged in their pentagonal Hasse diagram under refinement. (See \cite{bbr} for why $\Tr([n])$ is always a Tamari lattice.)

\begin{figure}
\newcommand{\braktwo}{
        \coordinate (000) at (0:0); 
        \coordinate (010) at (90:{sqrt(2)/2}); 
        \coordinate (111) at ($(010) + (010)$);
    
        \foreach \i in {(000),(010),(111)}{
            \draw[white, fill =white] \i circle (.1);
            \draw[black,fill = black] \i circle (.035);
        }
}

\newcommand{\trlppppp}[2]{
    \draw[line width=0.1mm] (#1)--(#2);
}
\newcommand{\trcvv}[2]{
    \draw[line width=0.1mm] (#1) to[out=130,in=-130] (#2);
}

\newcommand{\betikzzzz}[1]{
    \begin{tikzpicture}[scale = 0.75]
        #1
    \end{tikzpicture}
}

\begin{center}
    \begin{tikzcd}[row sep=-0.2em, column sep=2em]
    &
    \begin{bmatrix}\betikzzzz{\braktwo\trlppppp{000}{010}\trlppppp{010}{111}\trcvv{000}{111}} \end{bmatrix}\\
    \begin{bmatrix}\betikzzzz{\braktwo\trlppppp{000}{010}\trcvv{000}{111}} \end{bmatrix}\arrow[ur]\\
    &
    &
    \begin{bmatrix}\betikzzzz{\braktwo\trlppppp{010}{111}} \end{bmatrix} \arrow[uul]\\
    \begin{bmatrix}\betikzzzz{\braktwo\trlppppp{000}{010}} \end{bmatrix} \arrow[uu]\\
    &
    \begin{bmatrix}\betikzzzz{\braktwo} \end{bmatrix} \arrow[uur] \arrow[ul]
    \end{tikzcd}
\end{center}
\caption{Transfer systems for the lattice $[2]$.}
\label{fig:tamari}
\end{figure}

At various points in our exposition, it will be important to study the transfer system generated by a subset of binary relations $Q\subseteq {\le}\subseteq P\times P$.

\begin{defn}\label{defn:gen}
Let $(P,\le)$ be a lattice and suppose $Q\subseteq {\le}$ is a collection of relations refining $\le$. The \emph{transfer system generated} by $Q$, denoted $\langle Q\rangle$, is the minimal transfer system containing $Q$:
\[
  \langle Q\rangle = \bigwedge_{\substack{R\in \Tr(P)\\ R\supseteq Q}} R.
\]
\end{defn}

The following theorem (due to Rubin) gives us a great deal of control over $\langle Q\rangle$ in terms of $Q$.

\begin{theorem}[{\cite[Theorem A.2]{Rubin}}]\label{prop:gen}
If $(P,\le)$ is a finite lattice and $Q\subseteq {\le}$ is a subset of relations of $P$, then $\langle Q\rangle$ may be constructed from $Q$ by (1) closing under reflexivity, then (2) closing under restriction, and finally (3) closing under transitivity.
\end{theorem}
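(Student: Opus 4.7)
The plan is to prove the two inclusions $\langle Q\rangle \subseteq Q_3$ and $Q_3 \subseteq \langle Q\rangle$, where $Q_1 := Q \cup \{(x,x) : x \in P\}$, $Q_2$ is the closure of $Q_1$ under restriction, and $Q_3$ is the transitive closure of $Q_2$. Minimality, i.e.\ $Q_3 \subseteq \langle Q\rangle$, is immediate: any transfer system $R \supseteq Q$ is by definition reflexive, transitive, and restriction-closed, so iterating the three operations on $Q$ cannot exit $R$; hence $Q_3 \subseteq R$, and intersecting over all such $R$ gives the inclusion.

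The other direction amounts to checking that $Q_3$ is a transfer system refining $\le$. Reflexivity and transitivity are clear from the construction. Antisymmetry follows because each of the three closure operations preserves the containment inside $\le$: adding diagonal pairs is obvious; for restriction, $x \wedge y \le y$ whenever $y \le z$, so new pairs land in $\le$; and the transitive closure of a subset of the transitive relation $\le$ remains in $\le$. Since $\le$ is antisymmetric, so is $Q_3$.

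The main obstacle, which I expect to be the only substantive step, is verifying that $Q_3$ remains closed under restriction \emph{after} the transitive closure in step (3). Suppose $x \mathbin{Q_3} z$ and $y \le z$; I want $(x \wedge y) \mathbin{Q_3} y$. I will induct on the length $n$ of a witnessing chain $x = x_0 \mathbin{Q_2} x_1 \mathbin{Q_2} \cdots \mathbin{Q_2} x_n = z$. The base case $n = 1$ is precisely the fact that $Q_2$ is restriction-closed. For $n > 1$, set $w := x_{n-1}$. Applying restriction closure of $Q_2$ to $w \mathbin{Q_2} z$ and $y \le z$ yields $(w \wedge y) \mathbin{Q_2} y$. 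The inductive hypothesis applied to the chain $x_0, \ldots, x_{n-1}$ of length $n-1$, together with $w \wedge y \le w$, gives $(x \wedge w \wedge y) \mathbin{Q_3} (w \wedge y)$. Because the chain lives inside $\le$ we have $x \le w$, so $x \wedge w = x$ and therefore $x \wedge w \wedge y = x \wedge y$. Concatenating via transitivity of $Q_3$ yields $(x \wedge y) \mathbin{Q_3} y$, completing the induction and showing that $Q_3$ is indeed a transfer system.
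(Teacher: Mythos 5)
Your proof is correct: the only substantive point is that transitive closure preserves restriction-closedness, and your induction on chain length handles it properly (the identity $x\wedge w\wedge y = x\wedge y$, valid since $x\le w$, is exactly what makes the concatenation work). The paper does not prove this statement itself but cites it to Rubin's Theorem A.2, whose argument is essentially the same three-step verification you give.
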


We will also be interested in saturated transfer systems, which satisfy a certain 2-out-of-3 property:

\begin{defn}\label{defn:sat}
A transfer system $R$ on a lattice $P$ is called \emph{saturated} when $x~R~y\le z$ and $x~R~z$ implies $y~R~z$.
\end{defn}

Note that of the five transfer systems for $[2]$ presented in \autoref{fig:tamari}, all but
\begin{tikzpicture}[scale = 0.7]
    \coordinate (0) at (0:0);
        \coordinate (1) at (0:{sqrt(2)/2}); 
        \coordinate (2) at ($(1) + (1)$);
        \foreach \i in {(0),(1),(2)}{
            \draw[white, fill =white] \i circle (0.1);
            \draw[black,fill = black] \i circle (0.045);
        }
        \draw[line width=0.1mm] (0)--(1);
        \draw[line width=0.1mm] (0) to[out=30,in=150] (2);
\end{tikzpicture}
are saturated.

In \autoref{sec:matchstick}, we will prove that saturated transfer systems on modular lattices are in bijection with certain collections of covering relations. We warn the reader that this will lead to an alternative graphical presentation of saturated transfer systems that only records the relevant covering relations from the underlying lattice.

We conclude this review by noting that saturated transfer systems are determined by their covering relations; this will be important in \autoref{sec:matchstick} when we contemplate transfer systems generated by saturated covers.

\begin{prop}[{\cite[Proposition 5.8]{Rubin}}]\label{prop:sat_det_cov}
Let $(P,\le)$ be a finite lattice, let $R$ be a saturated transfer system on $P$, and let $R_{cov}$ denote the covering relations of $P$ in $R$. Then $\langle R_{cov}\rangle = R$.
\end{prop}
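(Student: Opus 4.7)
The plan is to show the nontrivial inclusion $R \subseteq \langle R_{cov}\rangle$; the reverse inclusion is automatic because $R_{cov}\subseteq R$ and $R$ is a transfer system, so $\langle R_{cov}\rangle\subseteq R$ by minimality. For the forward direction, suppose $x~R~y$ with $x<y$, and choose any maximal (covering) chain
\[
  x = x_0 \lessdot x_1 \lessdot \cdots \lessdot x_n = y
\]
in the underlying lattice $P$ (such a chain exists since $P$ is finite and $x\le y$). The goal is to argue that each covering pair $x_{i-1}\lessdot x_i$ lies in $R$; once this is established, each pair is an element of $R_{cov}$, and then closure of $\langle R_{cov}\rangle$ under transitivity (guaranteed by \autoref{prop:gen}) yields $x~\langle R_{cov}\rangle~y$.

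To produce the relations $x_{i-1}~R~x_i$, I would first use the restriction axiom to climb down from $y$: since $x~R~y$ and $x_i\le y$, restriction gives $(x\wedge x_i)~R~x_i$, which is just $x~R~x_i$ because $x\le x_i$ forces $x\wedge x_i = x$. Thus $x~R~x_i$ holds for every $i = 0, 1, \ldots, n$. In particular, $x~R~x_{i-1}$ and $x~R~x_i$ with $x_{i-1}\le x_i$; saturation (\autoref{defn:sat}) then immediately yields $x_{i-1}~R~x_i$, as desired.

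Combining the two steps: for each $i$, the pair $(x_{i-1},x_i)$ is a covering relation of $P$ that lies in $R$, hence is in $R_{cov}$. Transitivity of $\langle R_{cov}\rangle$ therefore produces $x~\langle R_{cov}\rangle~y$, completing the proof.

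The only real content is the inductive-style extraction of covering relations, and the key conceptual point is that restriction alone guarantees $x~R~x_i$ for all intermediate $x_i$ on the chain, while saturation is precisely what is needed to pass from the two long relations $x~R~x_{i-1}$ and $x~R~x_i$ to the short covering relation $x_{i-1}~R~x_i$. Without saturation, the argument fails (and indeed the result is false, as the transfer system on $[2]$ containing only $0~R~2$ shows). No additional lattice-theoretic input is required beyond the existence of a maximal chain between two comparable elements of a finite poset, so I anticipate no serious obstacles; the subtlety is merely recognizing how tightly saturation dovetails with restriction to force covering relations into $R$.
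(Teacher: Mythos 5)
Your proof is correct. The paper does not prove this proposition itself --- it cites \cite[Proposition 5.8]{Rubin} --- but your argument (restrict $x~R~y$ down a maximal chain to get $x~R~x_i$ for every link, then apply saturation to the pair $x~R~x_{i-1}\le x_i$, $x~R~x_i$ to extract each covering relation, and finish by transitivity of $\langle R_{cov}\rangle$) is exactly the standard proof of this fact and matches Rubin's.
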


\subsection{Outline of the paper}
In \autoref{sec:chi}, we develop Hill's theory of characteristic functions for transfer systems. Given a transfer system $R$ on a lattice $P$, $\chi^R$ is the monotone function taking $x\in P$ to $\chi^R(x) = \min \{y\in P\mid y~R~x\}$. In \autoref{thm:interior}, we show that the assignment
\[
\begin{aligned}
  \chi\colon \Tr(P)&\longrightarrow \End(P)\\
  R&\longmapsto \chi^R
\end{aligned}
\]
is an antitone function with image equal to the \emph{interior operators} on $P$ (i.e.~contractive and idempotent endomorphisms of $P$). Each nonempty fiber $\chi^{-1}\{f\}$ is in fact an interval in $\Tr(P)$ with least element easily determined by $f$, and greatest element saturated. In fact, in \autoref{thm:maxsat}, we show that the set of greatest elements of fibers of $\chi$ is equal to the set of saturated transfer systems on $P$.

Given the enhanced role for saturated transfer systems afforded by \autoref{thm:maxsat}, we turn to their study in \autoref{sec:matchstick}. Here we specialize to \emph{modular lattices} (those with no pentagonal sublattices) for two reasons: first, every subgroup lattice of an Abelian group is modular, and second, modularity allows us to generalize the ``matchstick game'' for saturated transfer systems on rectangular lattices from \cite{hmoo}. After recalling some basic facts about modular lattices, we prove \autoref{thm:matchstick} which places saturated transfer systems on a modular lattice $P$ in bijection with subsets of the covering relations of $P$ satisfying a restriction rule while avoiding a ``3-out-of-4'' configuration in covering diamonds.

In \autoref{sec:cat} we make a slight detour to study some categorical properties of transfer systems. This culminates in a general recursion (\autoref{thm:fusion}) for transfer systems on the \emph{fusion} $P*Q$ of two lattices $P,Q$; this is our shorthand for the lattice formed by identifying the minimal (resp.~maximal) elements of $P$ and $Q$, retaining all of the existing relations in $P$ and $Q$, and making each non-extremal $x\in P$, $y\in Q$ incomparable.

The iterated fusion $[2]^{*n}$  of $[2] = \{0<1<2\}$ is modular for all $n\in \NN$ and is isomorphic to the subgroup lattice of the elementary Abelian group $C_p\times C_p$ for $p$ prime and $n=p+1$. In \autoref{sec:CpCp}, we combine this observation with the techniques developed earlier in the paper to fully specify and enumerate the lattice of transfer systems for $C_p\times C_p$ (see \autoref{thm:ranktwo}).

\subsection*{Acknowledgments}
This work arose from the 2023 Electronic Computational Homotopy Theory (eCHT) Research Experience for Undergraduates, supported by NSF grant DMS--2135884; the authors thank Dan Isaksen for his support and leadership within the eCHT. K.O.~was partially supported by NSF grant DMS--2204365. The authors thank Mike Hill for generously sharing his construction of the characteristic map $\chi$ on transfer systems.

\section{Characteristic functions for transfer systems}\label{sec:chi}

Given a transfer system $R$ on a lattice $P$, we now describe its \emph{characteristic function} $\chi^R\colon P\to P$ along with the properties of the assignment $\chi\colon \Tr(P)\to \End(P)$, $R\mapsto \chi^R$. We warmly thank Mike Hill for generously sharing this construction and its basic properties with us. The results in the first part of this section are due to Hill, while \autoref{thm:interior} and \autoref{thm:maxsat} are original.

Hill's construction begins by considering the downsets of transfer systems. Given a lattice $P$, a transfer system $R$ on $P$, and an element $x$ of $P$, write
\[
  x^\downarrow_R := \{y\in P\mid y~R~x\}
\]
for the $R$-downset of $x$.

\begin{prop}\label{prop:strat}
For any finite lattice $P$, $x\in P$, and $R\in \Tr(P)$, the following properties hold:
\begin{enumerate}[(a)]
\item $x_R^\downarrow$ has a unique minimal element $n$.
\item If $n\le y\le x$ and $z~R~y$, then $n\le z$.
\end{enumerate}
\end{prop}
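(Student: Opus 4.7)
The plan is to use the restriction and transitivity axioms of a transfer system together with the finiteness of $P$ to show that $x_R^\downarrow$ is closed under binary meets. This closure gives (a) immediately, and a nearly identical restriction-then-transitivity argument settles (b).

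For (a), I would first note that $x~R~x$ by reflexivity, so $x \in x_R^\downarrow$ and the set is nonempty. The core step is to verify closure under meet: given $n_1, n_2 \in x_R^\downarrow$, the refinement of $\le$ by $R$ gives $n_2 \le x$, so applying the restriction axiom to $n_1~R~x$ with $y := n_2$ yields $(n_1 \wedge n_2)~R~n_2$; composing with $n_2~R~x$ via transitivity produces $(n_1 \wedge n_2)~R~x$. Since $P$ is finite, iterating pairwise shows that the meet of all elements of $x_R^\downarrow$ lies in $x_R^\downarrow$, and this meet is the desired unique minimum $n$.

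For (b), given $n \le y \le x$ and $z~R~y$, I would apply restriction to $z~R~y$ with the comparable element $n \le y$ to obtain $(z \wedge n)~R~n$. Since $n~R~x$ by the defining property of $n$, transitivity yields $(z \wedge n)~R~x$, so $z \wedge n \in x_R^\downarrow$. Minimality of $n$ in $x_R^\downarrow$ then forces $z \wedge n = n$, \emph{i.e.}, $n \le z$.

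There is no serious obstacle here: both parts reduce to a single restriction-then-transitivity maneuver, and the only subtle point is to remember that $R$ refines $\le$ in order to verify the ordering hypothesis of the restriction axiom when pairing two elements of $x_R^\downarrow$.
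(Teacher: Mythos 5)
Your proof is correct and follows essentially the same route as the paper's: both parts come down to the restriction axiom followed by transitivity, showing $x_R^\downarrow$ is closed under meets for (a) and that $n\wedge z\in x_R^\downarrow$ for (b). The paper's version is just terser, leaving the restriction-then-transitivity step implicit in (a).
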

\begin{proof}
For (a), the restriction axiom implies that $n = \bigwedge_{y\in x_R^\downarrow} y$ works.

For (b), if $z~R~y$, then taking meet with $n$ gives $(n\wedge z)~R~((n\wedge y) = n)~R~x$. By minimality of $n$, we thus must have $n\le n\wedge z$, so in fact $n\le z$ as desired.
\end{proof}

\begin{defn}
Given a finite lattice $P$ and $R\in \Tr(P)$, the \emph{characteristic function} of $R$ is
\[
\begin{aligned}
  \chi^R\colon P&\longrightarrow P\\
  x&\longmapsto \min x^\downarrow_R.
\end{aligned}
\]
\end{defn}

By \autoref{prop:strat}(b), we see that $P$ is stratified by the subsets
\[
  \chi^R(x)^\uparrow = \{y\in P\mid \chi^R(x)\le y\}.
\]
Indeed, as $x$ varies, we get a family of nested subsets $\chi^R(x)^\uparrow$ of $P$ whose union is $P$ and such that there are no $R$-relation crossing from lower to higher strata.

The following property follows directly from definitions:
\begin{prop}\label{prop:chimono}
Given a transfer system $R$ on a finite lattice $P$, $\chi^R\colon P\to P$ is monotone.\hfill\qedsymbol
\end{prop}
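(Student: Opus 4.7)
The plan is to exploit the restriction axiom of transfer systems directly, since $\chi^R(x)$ is defined as the minimum of an $R$-downset and the restriction axiom is precisely what controls how downsets behave under $\le$. Given $x \le y$ in $P$, I would set $n := \chi^R(y)$, which by \autoref{prop:strat}(a) exists and by definition satisfies $n \, R \, y$.

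With $n \, R \, y$ and $x \le y$ in hand, the restriction axiom from \autoref{defn:tr} immediately gives $(n \wedge x) \, R \, x$. Hence $n \wedge x \in x^\downarrow_R$, and minimality of $\chi^R(x)$ in this set forces
\[
  \chi^R(x) \le n \wedge x \le n = \chi^R(y),
\]
which is the desired monotonicity. No real obstacle is anticipated: the statement is essentially a single invocation of restriction combined with the well-definedness of $\chi^R$ already established. The only point worth flagging is that the hypothesis $x \le y$ is a relation in the ambient lattice (not necessarily in $R$), and the restriction axiom is exactly the mechanism that turns such an ambient inequality into an $R$-relation on the meet with an element of $x^\downarrow_R$.
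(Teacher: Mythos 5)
Your argument is correct and is precisely the ``follows directly from definitions'' argument the paper has in mind (the paper omits the proof entirely): apply restriction to $\chi^R(y)~R~y$ and $x\le y$ to land $\chi^R(y)\wedge x$ in $x^\downarrow_R$, then invoke minimality. Nothing to add.
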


Write $\End(P)$ for the collection of (monotone) endomorphisms of $P$ equipped with the pointwise partial order: $f\le g$ if and only if $f(x)\le g(x)$ for all $x\in P$. It is not difficult to check that when a transfer system $R$ refines a transfer system $R'$ (so $R\le R'$ in the canonical partial order on $\Tr(P)$), we have $\chi^{R'}\le \chi^R$. We record this fact in the following proposition:

\begin{prop}\label{prop:antitone}
The map
\[
\begin{aligned}
    \chi\colon \Tr(P)&\longrightarrow \End(P)\\
    R&\longmapsto \chi^R
\end{aligned}
\]
is an antitone map of posets.\hfill\qedsymbol
\end{prop}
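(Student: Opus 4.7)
The plan is to unpack definitions and verify the single nontrivial claim: that $R\le R'$ in $\Tr(P)$ forces $\chi^{R'}\le \chi^R$ pointwise. (Well-definedness of $\chi$ as a map into $\End(P)$ is already handled by \autoref{prop:chimono}.)

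First, I would recall that $R\le R'$ means $R\subseteq R'$ as sets of relations, i.e.\ every $R$-relation is also an $R'$-relation. From this it follows immediately that for every $x\in P$,
\[
  x^\downarrow_R \;=\; \{y\in P\mid y\mathbin{R}x\} \;\subseteq\; \{y\in P\mid y\mathbin{R'}x\} \;=\; x^\downarrow_{R'}.
\]
Both of these sets are nonempty (they contain $x$) and have unique minima by \autoref{prop:strat}(a), so we may apply $\min$. Since the minimum of a larger subset of a poset is less than or equal to the minimum of a smaller subset (whenever both exist), we conclude
\[
  \chi^{R'}(x) \;=\; \min x^\downarrow_{R'} \;\le\; \min x^\downarrow_R \;=\; \chi^R(x),
\]
for every $x\in P$. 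This is precisely the statement $\chi^{R'}\le \chi^R$ in the pointwise order on $\End(P)$, which establishes that $\chi$ reverses the order.

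There is no real obstacle here: the whole argument is a two-line chase through the definitions, with the only substantive input being \autoref{prop:strat}(a), which guarantees that the minima being compared actually exist. The proposition is essentially a bookkeeping statement that formalizes the intuition that adding more relations to a transfer system can only shrink the minimum of each downset.
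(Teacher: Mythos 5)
Your proof is correct and is exactly the definition-chase the paper has in mind (the paper omits the argument, noting only that it ``is not difficult to check''): $R\le R'$ gives $x^\downarrow_R\subseteq x^\downarrow_{R'}$, and since \autoref{prop:strat}(a) produces a genuine least element of each downset, the least element of the larger set lies below that of the smaller one. Nothing further is needed.
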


We now study the image of $\chi$. It follows directly from definitions that each $f = \chi^R$ is an \emph{interior operator}, \emph{i.e.} $f$ is \emph{idempotent} ($f(f(x))=f(x)$ for all $x$) and \emph{contractive} ($f(x)\le x$ for all $x$). We write $\End^\circ(P)\subseteq \End(P)$ for the collection of interior operators on $P$. We show all interior operators are contained in the image, which requires the following lemma.

\begin{lemma}\label{lem:diagram}
Suppose $f$ is an interior operator on a poset $P$, $y,z\le x\in P$, and $f(x)\le y$. Then $f(z)\le y\wedge z$.
\end{lemma}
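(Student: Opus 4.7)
The plan is to observe that this is essentially a direct consequence of monotonicity and contractiveness of $f$, and actually does not require idempotence. Given the hypotheses, I would chain two inequalities and then combine them using the universal property of the meet.

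First I would note that because $z \le x$ and $f$ is monotone (as an element of $\End(P)$), we have $f(z) \le f(x)$. Combined with the hypothesis $f(x) \le y$, this yields $f(z) \le y$. Separately, since $f$ is an interior operator it is contractive, so $f(z) \le z$. Having shown $f(z) \le y$ and $f(z) \le z$, the universal property of the meet $y \wedge z$ (which exists in the ambient finite lattice throughout the paper) delivers $f(z) \le y \wedge z$, as desired.

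The step I expect to be most worth flagging is not really an obstacle but a sanity check: making sure that monotonicity is indeed part of the data (it is, since $\End(P)$ is defined in the excerpt as the poset of monotone endomorphisms), and noting that idempotence plays no role here. In other words, the lemma holds for any contractive monotone endomorphism of $P$, not only for interior operators; this is worth recording because later applications of \autoref{lem:diagram} in the proofs of \autoref{thm:interior} and \autoref{thm:maxsat} may want to cite just contractiveness and monotonicity rather than the full interior-operator hypothesis.
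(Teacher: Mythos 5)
Your argument is correct and is essentially identical to the paper's proof: both chain $f(z)\le f(x)\le y$ via monotonicity, use contractiveness for $f(z)\le z$, and conclude by the universal property of the meet. Your observation that idempotence is not needed is accurate but does not change the substance of the argument.
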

\begin{proof}
Notice that $f(z) \leq f(x) \leq y$ and $f(z)\leq z$, so $f(z) \leq y\wedge z$.
\end{proof}

\begin{theorem}\label{thm:interior}
The characteristic map $\chi\colon \Tr(P)\to \End(P)$ has image equal to $\End^\circ(P)$.
\end{theorem}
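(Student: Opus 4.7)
The plan is to verify both inclusions of subsets of $\End(P)$ separately. For the forward inclusion $\chi(\Tr(P)) \subseteq \End^\circ(P)$, \autoref{prop:chimono} already records monotonicity of $\chi^R$. Contractivity is immediate from reflexivity of $R$: since $x~R~x$, $x$ lies in its own $R$-downset, so $\chi^R(x) = \min x^\downarrow_R \leq x$. Idempotence comes from transitivity: if $y~R~\chi^R(x)$, composing with the defining relation $\chi^R(x)~R~x$ yields $y~R~x$, so $\chi^R(x) \leq y$ by the minimum-defining property of $\chi^R(x)$. Taking the minimum over such $y$ gives $\chi^R(\chi^R(x)) = \chi^R(x)$.

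For the reverse inclusion, I would produce, for each $f \in \End^\circ(P)$, a transfer system $R_f$ with $\chi^{R_f} = f$. The natural candidate is
\[
  x ~R_f~ y \iff f(y) \leq x \leq y.
\]
With this definition, $\chi^{R_f}(y) = f(y)$ is immediate: the set $\{x \in P : f(y) \leq x \leq y\}$ has $f(y)$ as its minimum, using contractivity to see that $f(y)$ itself belongs to the set. The content is in checking that $R_f$ actually is a transfer system. Reflexivity comes from contractivity, and refining $\leq$ holds by construction. Transitivity is where idempotence enters: if $x~R_f~y$ and $y~R_f~z$, then $f(z) \leq y$ combined with monotonicity and idempotence gives $f(z) = f(f(z)) \leq f(y) \leq x$, whence $x~R_f~z$. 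Restriction is precisely \autoref{lem:diagram}, applied with the lemma's triple $(x,y,z)$ taken to be $(z, x, y)$ in our setup: the hypothesis $f(z) \leq x$ (from $x~R_f~z$) together with $x, y \leq z$ yields $f(y) \leq x \wedge y$, which together with $x \wedge y \leq y$ says $(x \wedge y)~R_f~y$.

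The principal obstacle is purely conceptual: identifying the right definition of $R_f$; once this is in hand, each verification is a one-line argument. The two defining properties of an interior operator beyond contractivity play complementary roles here — idempotence makes $R_f$ transitive and monotonicity makes $R_f$ closed under restriction — which retrospectively justifies why exactly interior operators appear in the image. A quick check also shows $R_f$ is saturated: if $x~R_f~y \leq z$ and $x~R_f~z$, then $f(z) \leq x \leq y \leq z$ gives $y~R_f~z$. This suggests $R_f$ will turn out to be the maximum element of the fiber $\chi^{-1}\{f\}$, foreshadowing \autoref{thm:maxsat}.
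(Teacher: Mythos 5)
Your proof is correct, but it takes a genuinely different route from the paper's. The paper realizes $f\in\End^\circ(P)$ as $\chi^{R_*}$ where $R_*=\langle\{(f(x),x)\mid x\in P\}\rangle$ is the transfer system \emph{generated} by the graph of $f$; this forces a case analysis through \autoref{prop:gen}, tracking how relations arise from restriction closure (handled by \autoref{lem:diagram}) and then transitive closure (handled by idempotence), in order to show every relation $y~R_*~x$ satisfies $f(x)\le y$. You instead write down the explicit closed-form relation $x~R_f~y\iff f(y)\le x\le y$ and verify the transfer system axioms directly, with no closure process to analyze; your verifications of transitivity, restriction (via \autoref{lem:diagram} with the substitution you indicate), and $\chi^{R_f}=f$ all check out. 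The trade-off is instructive: the paper's $R_*$ is the \emph{least} element of the fiber $\chi^{-1}\{f\}$ (this is exactly the $\tilde R$ of \autoref{thm:minchi}, so the same computation is reused there), whereas your $R_f$ is, as you observe, saturated and hence the \emph{greatest} element of the fiber by \autoref{thm:maxsat}; your construction thus gives an independent and quite economical proof that every interior operator is hit by a saturated transfer system, effectively absorbing half of \autoref{thm:maxsat} into the surjectivity argument. Your explicit verification of the forward inclusion (contractivity from reflexivity, idempotence from transitivity plus minimality) is also slightly more careful than the paper's ``follows directly from definitions.''
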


\begin{proof}
    We have already observed that $\chi(\Tr(P))\subseteq \End^\circ(P)$, so we now check the opposite inclusion. Let $f\in \End^\circ(P)$ and define a relation $R$ on $P$ via $y~R~x$ if either $y=x$ or $f(x)=y$. Then let $R'$ be the closure of $R$ under restriction, and finally let $R_{*}$ be the closure of $R'$ under transitivity. By \autoref{prop:gen}, we get that $R_{*}=\langle R\rangle$. It now suffices to show that $y~R_{*}~x$ implies $f(x)\leq y$; indeed, if this holds then $f(x)=\text{min }x^{\downarrow}_{R}$ and so $\chi^{R_{*}}=f$. Let $y,x\in P$ such that $y~R_{*}~x$. If $y~R~x$ then $f(x)=y$ or $f(x)\leq x =y$ as desired. So suppose that $y \mathrel{\cancel R} x$. Now we have two cases. First suppose that $y~R'~x$. Then we get that there exists $z,w\in P$ such that $y=x\wedge z$ and we have the following diagram:\\
    \begin{center}
    \begin{tikzcd}[ampersand replacement=\&]
        x\arrow[r] \&
        w \\
        y=x\wedge z \arrow[u,"R'"] \arrow[r]\&
        z. \arrow[u,"R"]
    \end{tikzcd}
    \end{center}
    Because $z~R~w$ we in fact have that $f(w)=z$. Thus by \autoref{lem:diagram}, we see that $f(x)\leq y$. Now suppose that $y\mathrel{\cancel{R'}}x$. Thus $y~R_{*}~x$ must have arisen from closure under transitivity, \emph{i.e.}, we have $y~R'~z~R'~x$ for some $z\in P$. Then using the previous cases, we know that $f(x)\leq z$, and $f(z)\leq y$. Thus because $f$ is an interior operator, we get that $f(x)=f(f(x))\leq f(z)\leq y$. Hence we see that for all $y~R_{*}~x$ that $f(x)\leq y$. Therefore $\chi(\Tr(P))=\End^\circ(P)$.
\end{proof}

Now that we know the image of $\chi$, we turn to investigating the fibers of this map. Our next lemma shows the fibers are closed under joins, which guarantees each fiber has a greatest element.
\begin{lemma}\label{lem:chijoins}
Let $P$ be a lattice and let $R,R'\in \Tr(P)$, then $\chi^{R}=\chi^{R'}$ implies $\chi^{R\vee R'}=\chi^{R}=\chi^{R'}$ (where $R\vee R'$ is the join of $R,R'$ in $\Tr(P)$).
\end{lemma}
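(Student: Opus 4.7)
The plan is to exploit the characterization of $R \vee R'$ as $\langle R \cup R'\rangle$ and to propagate the property ``$y\mathbin{Q}x \implies f(x) \le y$'' through Rubin's explicit construction (\autoref{prop:gen}) of the generated transfer system. Throughout, write $f := \chi^R = \chi^{R'}$, which is an interior operator by (the proof of) \autoref{thm:interior}.

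First, I would dispatch one inequality immediately: since $R, R' \le R\vee R'$, antitonicity of $\chi$ (\autoref{prop:antitone}) gives $\chi^{R\vee R'} \le \chi^R = f$. Equivalently, $x^\downarrow_R \subseteq x^\downarrow_{R\vee R'}$ for every $x$, so the minimum of the larger downset is no greater than the minimum of the smaller one.

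The real content is the reverse inequality $f(x) \le \chi^{R\vee R'}(x)$. For this, I would prove by induction on Rubin's construction that $y\mathrel{(R\vee R')}x$ implies $f(x) \le y$; specializing to $y = \chi^{R\vee R'}(x)$ then finishes the argument. The base case $y\mathbin{R}x$ or $y\mathbin{R'}x$ is immediate from $f = \chi^R = \chi^{R'}$. For the restriction closure, suppose we have already shown $y'\mathbin{Q}z \implies f(z)\le y'$ for relations $Q$ at some stage, and let $x\le z$. For the restricted relation $(y'\wedge x)\mathbin{Q'}x$, monotonicity of $f$ gives $f(x)\le f(z) \le y'$, while contractiveness gives $f(x)\le x$; meeting yields $f(x)\le y'\wedge x$. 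For the transitive closure step, suppose $f(z)\le y$ and $f(x) \le z$ have been established; then monotonicity gives $f(f(x))\le f(z)\le y$, and idempotence collapses $f(f(x)) = f(x)$, so $f(x)\le y$.

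The one subtlety worth flagging is that \autoref{prop:gen} performs the three closures (reflexivity, then restriction, then transitivity) in a fixed order rather than iteratively, so I should confirm that the ``propagation'' argument survives this one-shot procedure. Reflexivity is automatic since $R, R'$ are partial orders, so the real work is that the propagation inequality is preserved through a single restriction closure of $R\cup R'$ and then through a single transitive closure; my inductive steps above handle exactly these two stages, so no iteration is required. The main conceptual point, and the reason the lemma is true at all, is the triple use of the interior operator axioms on $f$: monotonicity for restriction, contractiveness for meeting with $x$, and idempotence for transitivity.
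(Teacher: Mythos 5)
Your proof is correct, but it takes a genuinely different route from the paper's. The paper argues by contradiction: assuming the minimum of some $x^\downarrow_{R\vee R'}$ drops below $\chi^R(x)$, it notes that $R\cup R'$ is already closed under restriction (so by \autoref{prop:gen} only transitivity adds new relations), extracts an intermediate element $w$ with $z~R~w~R'~x$, and contradicts minimality by showing $z$ must already be the $R$-minimum (hence the $R'$-minimum) of $w^\downarrow$. You instead propagate the invariant ``$y \mathrel{Q} x \Rightarrow f(x)\le y$'' forward through Rubin's construction applied to the generating set $R\cup R'$ --- which is precisely the engine of the paper's proof of \autoref{thm:interior}, redeployed with a different seed relation and with the hypothesis $\chi^R=\chi^{R'}$ supplying the base case. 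Your version is direct rather than by contradiction, handles transitive chains of arbitrary length cleanly by induction on chain length (the paper's proof as written assumes a single intermediate $w$, which strictly speaking needs a word for longer alternating chains), and generalizes verbatim to joins of arbitrary families of transfer systems sharing a characteristic function. The cost is that you invoke all three interior-operator axioms for $f=\chi^R$ (monotonicity, contractiveness, idempotence), whereas the paper's argument runs on minimality of downsets alone; but these axioms are already established before \autoref{thm:interior}, so nothing circular occurs. One small point worth making explicit if you write this up: the base case uses that $y~R~x$ gives $f(x)=\chi^R(x)=\min x^\downarrow_R\le y$ and symmetrically for $R'$, which is exactly where $\chi^R=\chi^{R'}$ enters --- you state this but it deserves emphasis as the only place the hypothesis is used.
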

\begin{proof}
Suppose for contradiction that $\chi^{R\vee R'}\neq \chi^R$. Since $R\leq R \vee R'$ and $\chi$ is an antitone map, it must be that $\chi^{R\vee R'}< \chi^R$. Thus there is some $x\in P$ such that $z= \text{min }x^{\downarrow}_{R\vee R'}< \text{min }x^{\downarrow}_{R}=\text{min }x^{\downarrow}_{R'}=y$. In particular, this means $z$ is $R\vee R'$ related to $x$, but $z~\cancel R~x$ and $z~\cancel{R'} ~x$.

The join $R \vee R'$ is the transfer system generated by $R$ and $R'$, and \autoref{prop:gen} implies that the only new relations in $\langle R, R' \rangle$ are those added via closure under transitivity. Thus there exists some $w\in P$ such that $z~R~w~R'~x$ or $z~R'~w~R~x$. Without loss of generality assume $z~R~w~R'~x$.

We claim $z=\text{min }w^{\downarrow}_{R}$. Indeed if we had $z'~R~w$ where $z'<z$ then $z'~(R\vee R')~x$ would contradict the minimality of $z$. Hence $z= \text{min }w^{\downarrow}_{R}$. Since $\chi^R=\chi^{R'}$ we then have $z= \text{min }w^{\downarrow}_{R'}$ which gives that $z~R'~w$. Thus $z~R'~x$ contradicting the minimality of $y$. So we get that $\text{min }x^{\downarrow}_{R\vee R'}=\text{min }x^{\downarrow}_{R}=\text{min }x^{\downarrow}_{R'}$ for all $x\in P$, as required.
\end{proof}

It turns out that the maxima of the fibers of $\chi$ are exactly the saturated transfer systems of \autoref{defn:sat}:

\begin{theorem}\label{thm:maxsat}
Each fiber of $\chi\colon \Tr(P)\to \End^\circ(P)$ has a greatest element; furthermore, the set of greatest elements of fibers of $\chi$ is exactly the set of saturated transfer systems on $P$.
\end{theorem}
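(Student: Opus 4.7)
My plan is to exhibit, for each interior operator $f\in \End^\circ(P)$, an explicit transfer system that will simultaneously be the greatest element of the fiber $\chi^{-1}\{f\}$ and a saturated transfer system; the converse direction will then show that every saturated transfer system arises this way. Specifically, I define
\[ R^f := \{(y,x)\in P\times P : f(x)\le y\le x\}. \]

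I would first verify that $R^f$ is a transfer system refining $\le$ and that $\chi^{R^f}=f$. Reflexivity follows from $f(x)\le x$, and antisymmetry is automatic since $R^f$ refines $\le$. Transitivity leverages both monotonicity and idempotency of $f$: from $f(x)\le y$ I apply $f$ to obtain $f(x)=f(f(x))\le f(y)$, so if $f(y)\le z$ then $f(x)\le z$ as required. The restriction axiom uses contractivity and monotonicity: if $f(x)\le y\le x$ and $y'\le x$, then $f(y')\le y'$ and $f(y')\le f(x)\le y$, so $f(y')\le y\wedge y'$. The identity $\chi^{R^f}(x)=\min\{y:f(x)\le y\le x\}=f(x)$ is then immediate. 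To see $R^f$ is the greatest element of $\chi^{-1}\{f\}$, I observe that any $R\in \Tr(P)$ with $\chi^R=f$ and $y~R~x$ must satisfy $f(x)=\chi^R(x)\le y\le x$, placing $(y,x)$ in $R^f$. This establishes the first claim of the theorem; alternatively, \autoref{lem:chijoins} combined with finiteness of $\Tr(P)$ would directly produce a greatest element in each nonempty fiber.

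Next, I would show $R^f$ is saturated: if $x~R^f~y$ and $x~R^f~z$ with $y\le z$, then $f(z)\le x\le y$ together with $y\le z$ places $(y,z)$ in $R^f$ by definition. For the converse, I would take a saturated transfer system $R$, set $f=\chi^R$, and prove $R=R^f$. The inclusion $R\subseteq R^f$ is already established. For the reverse inclusion, given $(y,x)\in R^f$, I apply restriction to the relation $f(x)~R~x$ with $y\le x$ to obtain $(f(x)\wedge y)~R~y$; since $f(x)\le y$ this simplifies to $f(x)~R~y$. Invoking saturation on $f(x)~R~y\le x$ and $f(x)~R~x$ then yields $y~R~x$, as desired. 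The main subtlety lies in the transitivity verification for $R^f$, where idempotency of $f$ is genuinely needed (in contrast to contractivity and monotonicity, which drive reflexivity and restriction); beyond that, saturation interacts almost tautologically with the explicit formula defining $R^f$.
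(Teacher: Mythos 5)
Your proof is correct, but it takes a genuinely different route from the paper's. The paper obtains the greatest element of each fiber abstractly (fibers are closed under joins, \autoref{lem:chijoins}, plus finiteness) and then argues that the saturated hull $R_{\mathrm{sat}}$ of any $R$ lies in the same fiber and is maximal there, the maximality being established by a somewhat delicate contradiction argument. You instead write down the greatest element explicitly: $R^f = \{(y,x) : f(x)\le y\le x\}$ for each interior operator $f$, verify directly that it is a saturated transfer system with $\chi^{R^f}=f$, observe that any $R$ with $\chi^R=f$ is contained in $R^f$ (so $R^f$ is the fiber's maximum), and close the loop by showing a saturated $R$ satisfies $R = R^{\chi^R}$ via one restriction and one application of the 2-out-of-3 condition. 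All the individual verifications check out: idempotency is indeed exactly what transitivity of $R^f$ requires, and the step $f(x)~R~x$ in the converse direction is justified by \autoref{prop:strat}(a). What your approach buys is an explicit closed-form description of the maximal element of each fiber and, as a byproduct, an independent proof of the surjectivity of $\chi$ onto $\End^\circ(P)$ (since $\chi^{R^f}=f$), making \autoref{lem:chijoins} and the paper's contradiction argument unnecessary; it also makes the bijection between interior operators and saturated transfer systems completely transparent. What the paper's route buys is the additional structural statement, used elsewhere, that the saturated hull of any $R$ lies in the same fiber as $R$, which your argument does not directly address (though it follows from your description once one knows $R\le R_{\mathrm{sat}}\le R^{\chi^R}$ and that $R^{\chi^R}$ is saturated).
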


\begin{proof}
Since $P$ is a finite lattice, so is $\Tr(P)$. Since each fiber of $\chi$ is finite, \autoref{lem:chijoins} shows that each fiber contains a greatest element. If $R\in \Tr(P)$, let $R_{\text{sat}}$ be the minimal saturated transfer system containing $R$. Then for $R\in \Tr(P)$, it suffices to show that $\operatorname{max}\{R'\in \Tr(P):\chi^{R'}=\chi^{R}\}=R_{\text{sat}}$. We first show that $\chi^{R_{\text{sat}}}=\chi^{R}$. Notice that the only new condition for saturated transfer systems is that if $x~R~y\leq z$ and $x~R~z$, then $y~R~z$. Because $x\leq y$, adding that $y~R~z$ leaves $\text{min }x^{\downarrow}_{R_{\text{sat}}}$ the same. Thus we see that $\chi^{R}=\chi^{R_{\text{sat}}}$.

It remains to show that $R_{\text{sat}}$ is maximal among those $R'\in \Tr(P)$ such that $\chi^{R'}=\chi^{R}$. Suppose that $R'>R_{\text{sat}}$ with $\chi^{R'}=\chi^{R}$. Then let $x~R'~y$ be such that $(x,y)\not\in R,R_{\text{sat}}$. This is equivalent to saying that for all $q~R~x$, $q\not\mathrel{R} y$. Let $z=\chi^{R}(y)=\chi^{R'}(y)$. The reader may check from definitions that $x$ and $z$ are incomparable, whence $x\wedge z< x,z$. By restriction closure, we have $x\wedge z~R'~z$ which by transitivity tells us that $z\wedge x ~R'~ y$. And in fact because $z\wedge x < x,z$ we get that this violates the minimality of $z$. Thus in every case there is a contradiction, so no such $R'$ exists. Hence we get that $\text{max }\{R'\in \Tr(P):\chi^{R'}=\chi^{R}\}=R_{\text{sat}}$.
\end{proof}

\begin{remark}
Restricting $\chi$ to saturated transfer systems  provides an antitone bijection between saturated transfer systems on $P$ and interior operators on $P$. The fact that these structures are in bijection was first observed in unpublished work of Balchin--MacBrough--Ormsby \cite{closure}. Their method is quite different. Indeed, they produce a monotone Galois connection $(F,G)$ with
\[
\begin{aligned}
    F\colon 2^P&\longrightarrow \Tr(P)\\
    S&\longmapsto \langle x\le \top\mid x\in S\rangle
\end{aligned}
\]
and
\[
\begin{aligned}
    G\colon \Tr(P)&\longrightarrow 2^P\\
    R&\longmapsto \{x\in P\mid x~R~\top\}.
\end{aligned}
\]
This restricts to a Galois correspondence between images. It turns out that $F(2^P)$ consists of \emph{cosaturated} transfer systems (those generated by relations involving $\top$) and $G(\Tr(P))$ consists of \emph{Moore families} --- the collections of `closed sets' for \emph{closure operators} on $P$ (monotone endomorphisms that are idempotent and extensive). It is further the case that cosaturated transfer systems on a finite lattice $P$ are in bijection with saturated transfer systems on $P^{\mathrm{op}}$, while closure operators on $P$ are in bijection with interior operators on $P^{\mathrm{op}}$. Replacing $P$ with $P^{\mathrm{op}}$ supplies a bijection between interior operators on $P$ and saturated transfer systems on $P$ which ultimately produces the same bijection found here.
\end{remark}

\begin{remark}\label{rmk:closure}
Interior operators on a finite lattice $P$ are in natural bijection with \emph{interior systems} (submonoids of $(P,\vee)$) and comonads on $P$ (viewed as a category). If $P$ is self-dual (\emph{e.g.}, if $P$ is the subgroup lattice of an Abelian group), then interior operators are further in bijection with closure operators, submonoids of $(P,\wedge)$, and monads on $P$.

In case $P$ is the finite (self-dual) Boolean lattice $[1]^n$, all of these structures are counted by OEIS entry A102896, with values
\[
  1,\quad 2,\quad 7,\quad 61,\quad 2\ 480,\quad 1\ 385\ 552,\quad 75\ 973\ 751\ 474,\quad 14\ 087\ 648\ 235\ 707\ 352\ 472,
\]
for $n=0,1,\ldots,7$. No additional values of this sequence are known, but its base-2 logarithm is known to grow at the rate of $\binom{n}{\lfloor n/2\rfloor}$ (see \cite{Kleitman}). This provides a lower bound on the asymptotic growth of $|\Tr([1]^n)|$. We leave it to future research to study the relative sizes of $|\Tr(P)|$ and $|\End^\circ(P)|$ for classes of finite lattices $P$.
\end{remark}

\autoref{thm:maxsat} gives an excellent way of classifying and identifying maximal elements in the fibers. It can also be shown that there will be minimal elements for these fibers, but the description of these is less nice than simply taking the saturated hull.

\begin{lemma}\label{lem:chimeets}
    Let $P$ be a lattice and let $R,R'\in \Tr(P)$, then $\chi^{R}=\chi^{R'}\implies \chi^{R\wedge R'}=\chi^{R}=\chi^{R'}$.
\end{lemma}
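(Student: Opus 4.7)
The approach is much more direct than the join case (\autoref{lem:chijoins}), because the meet of two transfer systems is literally the intersection of their relations: $y\mathrel{(R\wedge R')} x$ iff $y\mathrel R x$ and $y\mathrel{R'} x$. So the plan is simply to pin down $\chi^{R\wedge R'}(x)$ by bracketing it from above and below using the common value $\chi^R(x)=\chi^{R'}(x)$.

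First, I would invoke antitonicity (\autoref{prop:antitone}): since $R\wedge R'\le R$ in $\Tr(P)$, we have $\chi^{R\wedge R'}\ge \chi^R$ pointwise. Concretely, $x^\downarrow_{R\wedge R'}\subseteq x^\downarrow_R$, so the minimum of the smaller set is at least as large. This gives one inequality ``for free'' and is the easy half.

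For the reverse inequality, fix $x\in P$ and let $y=\chi^R(x)=\chi^{R'}(x)$. Then $y\mathrel R x$ by definition of $\chi^R$, and likewise $y\mathrel{R'} x$ by definition of $\chi^{R'}$. Since $R\wedge R'$ is the intersection of $R$ and $R'$ as relations, it follows immediately that $y\mathrel{(R\wedge R')} x$, so $y\in x^\downarrow_{R\wedge R'}$ and therefore $\chi^{R\wedge R'}(x)\le y=\chi^R(x)$. Combining with the previous paragraph gives $\chi^{R\wedge R'}(x)=\chi^R(x)=\chi^{R'}(x)$, as $x$ was arbitrary.

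There is essentially no obstacle here; the only thing one has to be careful about is the direction of refinement versus the direction of the antitone inequality. Unlike the join case, no appeal to \autoref{prop:gen} is needed, because meets in $\Tr(P)$ do not introduce any new relations, so the $R$- and $R'$-witnesses of $y$ being the common minimum are directly witnesses in $R\wedge R'$ as well.
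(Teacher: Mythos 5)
Your proof is correct and is precisely the argument the paper has in mind: its entire proof is the one-line remark that the claim is ``a straightforward consequence of $R\wedge R'$ being the intersection of (relations in) $R$ and $R'$,'' and your bracketing of $\chi^{R\wedge R'}(x)$ via antitonicity on one side and membership of the common minimum in the intersection on the other is exactly the fleshed-out version of that remark.
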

\begin{proof}
This is a straightforward consequence of $R\wedge R'$ being the intersection of (relations in) $R$ and $R'$.
\end{proof}

\begin{theorem}\label{thm:minchi}
Each fiber of $\chi\colon \Tr(P)\to \End^\circ(P)$ has a least element; namely for $R\in \Tr(P)$,
\[
  \min \chi^{-1}\{\chi^R\} = \tilde{R}=\langle (x,y)\mid\chi^{R}(y)=x \text{ or }x=y\rangle.
\]
\end{theorem}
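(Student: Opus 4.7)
The plan is to first obtain existence of a least element of each fiber as a formal consequence of \autoref{lem:chimeets}, then identify that least element with $\tilde R$ by a two-sided containment argument. Existence is immediate: $\Tr(P)$ is a finite lattice, each fiber of $\chi$ is nonempty (by choosing any $R$ in it), and \autoref{lem:chimeets} shows fibers are closed under binary meets, so the meet of all elements of a fiber is its least element.

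For the explicit description, set $f = \chi^R$ and $S = \{(x,y)\in P\times P\mid f(y)=x \text{ or } x=y\}$, so that $\tilde R = \langle S\rangle$. The first step is to verify $\tilde R \in \chi^{-1}\{f\}$. By definition of $\chi^R$, for every $y\in P$ we have $f(y) \mathrel{R} y$, so $S\subseteq R$; since $R$ is a transfer system we get $\tilde R = \langle S\rangle \subseteq R$, and antitonicity (\autoref{prop:antitone}) gives $f = \chi^R \le \chi^{\tilde R}$ pointwise. For the reverse inequality, for each $x\in P$ the pair $(f(x),x)$ is a generator of $\tilde R$, hence $f(x)\in x^\downarrow_{\tilde R}$ and so $\chi^{\tilde R}(x) \le f(x)$. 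Combining these yields $\chi^{\tilde R} = f$.

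The second step is to show that $\tilde R$ is contained in every $R'\in \chi^{-1}\{f\}$. Fix such an $R'$. For every $y\in P$, $\chi^{R'}(y) = f(y)$ means $f(y) = \min y^\downarrow_{R'}$, and in particular $f(y)\mathrel{R'} y$. Thus $S\subseteq R'$, and since $R'$ is a transfer system the minimality clause in \autoref{defn:gen} yields $\tilde R = \langle S\rangle \subseteq R'$. Combined with the previous step this gives $\tilde R = \min \chi^{-1}\{f\}$, as claimed.

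I do not expect any serious obstacle here: the content is essentially bookkeeping, with the key conceptual observation being that the relations $\{(f(y),y)\}_{y\in P}$ form a minimal generating set for a transfer system with characteristic function $f$. The only place requiring a small sanity check is that passing to $\langle S\rangle$ via \autoref{prop:gen} (reflexive/restriction/transitive closure) neither enlarges $x^\downarrow_{\tilde R}$ below $f(x)$ nor escapes $R$, and both facts are immediate from $S\subseteq R$ and the sandwich $f\le \chi^{\tilde R}\le f$ computed above.
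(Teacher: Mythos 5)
Your proof is correct and follows essentially the same route as the paper: existence of the least element via \autoref{lem:chimeets}, then showing $\tilde R$ lies in the fiber and is contained in every member of it. The only place you diverge is in verifying $\chi^{\tilde R}=\chi^R$: the paper reruns the case analysis from the proof of \autoref{thm:interior}, whereas your sandwich $f\le\chi^{\tilde R}\le f$ --- using $S\subseteq R$ together with antitonicity (\autoref{prop:antitone}) for the lower bound and the generators $(f(x),x)$ for the upper bound --- is a cleaner shortcut that works equally well.
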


\begin{proof}
By \autoref{lem:chimeets} and the fact that $P$ is a finite lattice, we know that such a least element exists. By the same argument given in the proof of \autoref{thm:interior}, we have that $\chi^{\tilde{R}}=\chi^{R}$. To show that $\tilde{R}$ is minimal, suppose for contradiction that there exists some $R'\in\Tr(P)$ such that $R'<\tilde{R}$ and $\chi^{R'}=\chi^{R}$. Then there must exist $x,y\in P$ such that $\chi^{R}(y)=x$ but $x\not\mathrel{\tilde{R}}y$. Thus we see that $\chi^{R'}(y)\ne x=\chi^{R}(y)$, a contradiction.
\end{proof}

\begin{remark}
In their totality, the results of this section demonstrate how $\chi$ gives instructions for decomposing $\Tr(P)$ into a collection of disjoint intervals. Associated with each interior operator $f\in \End^\circ(P)$ is the fiber $\chi^{-1}\{f\}$ which is of the form $[R,R'] = \{R''\in \Tr(P)\mid R\le R''\le R'\}$ where $R$ is a transfer system of the type given in \autoref{thm:minchi}, and $R'$ is the saturated hull of $R$.
\end{remark}

\section{Modular lattices and matchstick games}\label{sec:matchstick}

The ``matchstick game'' was introduced by Hafeez--Marcus--Ormsby--Osorno in \cite{hmoo} to enumerate the saturated transfer systems on rectangular lattices. Their results hinge on showing saturated transfer systems are in bijection with special subsets of covering relations called ``saturated covers''. We generalize the matchstick game to modular lattices. 

\begin{defn}
A \emph{modular lattice} is a lattice $(M, \leq )$ where all $a, b, x\in M$ satisfy the \emph{modular law}:
\[
a \leq b \implies a \vee (x \wedge b) = (a \vee x) \wedge b.
\]
\end{defn}

While subgroup lattices are \emph{not} modular in general, it is well-known that the lattice of normal subgroups of a group is always modular; in particular, if $G$ is Abelian (or Dedekind), then $\Sub(G)$ is modular.

\begin{defn}\label{defn:satcov}
Let  $(M, \leq )$ be a modular lattice and let $Q\subseteq {\le}$ be a subset of covering relations for $M$. We call $Q$ a \emph{saturated cover} when the following conditions hold:
\begin{enumerate}
    \item[(1)] For $x, y\in M$, if $x ~Q ~(x \vee y)$ then $(x \wedge y) ~Q~ y$.
    \item[(2)] Suppose that $x$ and $y$ are covered by $x\vee y$ and cover $x\wedge y$ (we call such a tetrad a \emph{covering diamond}); if three of the four covering relations between $x, y, x \wedge y$, and $x \vee y$ are in $Q$, then the fourth covering relation is in $Q$ as well.
\end{enumerate} 
\end{defn}

\begin{example}
The diagram in \autoref{fig:satcovers} depicts all of the saturated covers for the lattice $[2]^{*3}\cong \Sub(C_2\times C_2)$. (Here $[2] = \{0<1<2\}$ and the fusion operation $*$ is defined in \autoref{defn:fusion}.) We will return to this example when discussing \autoref{thm:fusion}. The reader may also jump to \autoref{ex:cube} to see the saturated covers for the cube $[1]^3$.

\begin{figure}[ht]
\newcommand{\fusion}{
        \coordinate (000) at (0:0); 
        \coordinate (100) at (135:1); 
        \coordinate (001) at (45:1); 
        \coordinate (010) at (90:{sqrt(2)/2}); 
        \coordinate (111) at ($(100) + (001)$);
    
        \foreach \i in {(000),(100),(010),(001),(111)}{
            \draw[white, fill =white] \i circle (.1);
            \draw[black,fill = black] \i circle (.035);
        }
}

\newcommand{\trlp}[2]{
    \draw[line width=0.1mm] (#1)--(#2);
}

\newcommand{\betikzz}[1]{
    \begin{tikzpicture}[scale = 0.85]
        #1
    \end{tikzpicture}
}

\begin{center}
\begin{tikzcd}[row sep=4em, column sep=6em]
    & 
    \begin{bmatrix}\betikzz{\fusion \trlp{000}{010}\trlp{010}{111}\trlp{000}{001}\trlp{100}{111}\trlp{000}{100}\trlp{001}{111}}\end{bmatrix}\\
    \begin{bmatrix}\betikzz{\fusion \trlp{000}{010}\trlp{000}{100}\trlp{001}{111}}\end{bmatrix} \arrow[ur]&
    \begin{bmatrix}\betikzz{\fusion \trlp{000}{100}\trlp{000}{001}\trlp{010}{111}}\end{bmatrix}\arrow[u] &
    \begin{bmatrix}\betikzz{\fusion \trlp{000}{010}\trlp{000}{001}\trlp{100}{111}}\end{bmatrix} \arrow[ul]\\
    & 
    \begin{bmatrix}\betikzz{\fusion \trlp{000}{001}\trlp{000}{100}\trlp{000}{010}}\end{bmatrix} \arrow[uu,bend left=35]\\
    \begin{bmatrix}\betikzz{\fusion \trlp{000}{001}\trlp{000}{010}}\end{bmatrix} \arrow[uu] \arrow[ur] &
    \begin{bmatrix}\betikzz{\fusion \trlp{000}{001}\trlp{000}{100}}\end{bmatrix}\arrow[u] \arrow[uu,bend right=35]&
    \begin{bmatrix}\betikzz{\fusion \trlp{000}{010}\trlp{000}{001}}\end{bmatrix} \arrow[uu] \arrow[ul]\\
    \begin{bmatrix}\betikzz{\fusion \trlp{000}{100}}\end{bmatrix} \arrow[ur] \arrow[u]&
    \begin{bmatrix}\betikzz{\fusion \trlp{000}{010}}\end{bmatrix} \arrow[ul] \arrow[ur]&
    \begin{bmatrix}\betikzz{\fusion \trlp{000}{001}}\end{bmatrix} \arrow[ul] \arrow[u]\\
    &
    \begin{bmatrix}\betikzz{\fusion}\end{bmatrix} \arrow[ur] \arrow[u] \arrow[ul]
\end{tikzcd}
\end{center}
\caption{Saturated covers for the lattice $[2]^{*3}\cong \Sub(C_2\times C_2)$.}
\label{fig:satcovers}
\end{figure}
\end{example}

We now show that the set of covering relations for any saturated transfer system $R$ on a finite modular lattice $(M,\leq)$ forms a saturated cover. We will use the following lemma, which is standard and may be checked via an elementary argument.
\begin{lemma}\label{lemma:mod}
Let $(M,\leq)$ be a modular lattice. For any $x,y\in M$, if $x\vee y$ covers $x$, then $y$ covers $x\wedge y$.\hfill\qedsymbol
\end{lemma}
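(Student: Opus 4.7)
The plan is to prove the contrapositive-style statement by assuming an element $z$ strictly between $x \wedge y$ and $y$ and forcing $z$ to coincide with one of the endpoints, using the covering hypothesis at the top of the diamond together with a single application of the modular law.

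First, I would set up the standard interval-sliding: suppose $x \wedge y \le z \le y$ and consider the element $x \vee z$, which satisfies $x \le x \vee z \le x \vee y$. Since $x \vee y$ covers $x$, the covering hypothesis leaves only two possibilities, $x \vee z = x$ or $x \vee z = x \vee y$, and my goal reduces to showing that these force $z = x \wedge y$ or $z = y$ respectively.

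The first case is immediate: $x \vee z = x$ gives $z \le x$, and combined with $z \le y$ and $x \wedge y \le z$ this pins $z = x \wedge y$. The second case is where the modular law does its work. Since $z \le y$, I would apply modularity with $a = z$ and $b = y$ to obtain
\[
  z \vee (x \wedge y) = (z \vee x) \wedge y.
\]
The left-hand side collapses to $z$ because $x \wedge y \le z$, while the right-hand side becomes $(x \vee y) \wedge y = y$ by the case assumption, so $z = y$.

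There is no real obstacle here: the only delicate point is keeping track of which element plays the role of $a$ versus $b$ in the modular law, and making sure the two endpoint-equalities on either side of the equation $z \vee (x \wedge y) = (z \vee x) \wedge y$ are each justified by one of the two hypotheses ($x \wedge y \le z$ on the left, and the case assumption on the right). Once those bookkeeping steps are in place, the argument is three lines, which is consistent with the lemma's description as "elementary."
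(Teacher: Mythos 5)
Your proof is correct and is precisely the ``elementary argument'' the paper alludes to without writing down (it is the standard half of the diamond isomorphism theorem, transporting the interval $[x\wedge y, y]$ up to $[x, x\vee y]$ via $z\mapsto x\vee z$ and using modularity to recover $z$). The only bookkeeping point worth adding is that $x\wedge y < y$ strictly: if $x\wedge y = y$ then $y\le x$ and $x\vee y = x$ could not cover $x$, so your two cases really do establish a covering relation rather than an equality.
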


\begin{lemma}\label{lemma:3of4}
Let $R$ be a saturated transfer system on a modular lattice $M$ and let $R_{cov}$ denote the covering relations of $M$ that are in $R$. If three of the four edges in a covering diamond formed by $x, y, x \wedge y, x \vee y\in M$ exist in $R_{cov}$, then the fourth edge is in $R_{cov}$ as well.
\end{lemma}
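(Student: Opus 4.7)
The plan is to do a case analysis on which of the four covering relations in the diamond is missing from $R_{cov}$. By the symmetric roles of $x$ and $y$, there are really only two cases: either a ``bottom'' edge (say $(x\wedge y)\to x$) is missing while the other three are in $R_{cov}$, or a ``top'' edge (say $x\to (x\vee y)$) is missing while the other three are in $R_{cov}$. Since the remaining edge is, by hypothesis, a covering relation of $M$, it suffices to show that it lies in $R$.

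In the bottom-missing case, I would first combine $(x\wedge y)~R~y$ and $y~R~(x\vee y)$ via transitivity to obtain the ``long diagonal'' $(x\wedge y)~R~(x\vee y)$. Then restriction along $x\le x\vee y$ gives $((x\wedge y)\wedge x)~R~x$, and since $x\wedge y\le x$ this simplifies to the desired $(x\wedge y)~R~x$. In the top-missing case, transitivity of $(x\wedge y)~R~y$ with $y~R~(x\vee y)$ again yields $(x\wedge y)~R~(x\vee y)$. Now I would invoke the defining 2-out-of-3 property of saturated transfer systems (\autoref{defn:sat}) with the chain $(x\wedge y)~R~x\le (x\vee y)$ together with $(x\wedge y)~R~(x\vee y)$ to conclude $x~R~(x\vee y)$, as desired.

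I expect no real obstacle: the argument is a direct unwinding of the transfer system axioms plus the saturation condition. Notably, modularity does not enter the proof itself --- it is built into the hypothesis that $\{x,y,x\wedge y,x\vee y\}$ forms a genuine covering diamond (cf.\ \autoref{lemma:mod}). The only mild subtlety is bookkeeping: checking that in each case the new relation is indeed a covering relation, which is automatic since the four elements in question already form a covering diamond in $M$.
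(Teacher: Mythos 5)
Your proof is correct and takes essentially the same route as the paper's: both arguments derive the long diagonal $(x\wedge y)~R~(x\vee y)$ by transitivity and then recover the missing edge using restriction (for a missing bottom edge) or the 2-out-of-3 saturation rule (for a missing top edge), with modularity playing no role beyond guaranteeing the covering diamond. The only difference is organizational --- you case-split on which edge is absent, while the paper case-splits on which complete path through the diamond is present.
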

\begin{proof}
If $x,y,x\vee y,$ and $x\wedge y$ form a covering diamond in $M$ with 3 out of 4 edges in $R_{cov}$, then either $(x\wedge y)~R_{cov}~x~R_{cov}~(x\vee y)$ or $(x\wedge y)~R_{cov}~y~R_{cov}(x\vee y)$. In the first case, the restriction rule implies that $(x\wedge y)~R_{cov}~y$, and in the second case we get $(x\wedge y)~R_{cov}~x$. In either case, transitivity implies $(x\wedge y)~R~(x\vee y)$, and then the 2-out-of-3 rule for saturated transfer systems gives us the final edge of the covering diamond.
\end{proof}

\begin{prop}\label{prop:sys_to_cov}
If $R$ is a saturated transfer system on a modular lattice $M$ and $R_{cov}$ is the collection of covering relations (of $M$) in $R$, then $R_{cov}$ is a saturated cover.
\end{prop}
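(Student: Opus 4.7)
The proposition asserts two closure conditions on $R_{cov}$, and condition (2) — the 3-out-of-4 rule on covering diamonds — has already been established as \autoref{lemma:3of4}. So the entire task reduces to verifying the restriction condition (1): assuming $x~R_{cov}~(x\vee y)$, show that $(x\wedge y)~R_{cov}~y$.

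My plan is to split this into two sub-claims: that $y$ covers $x\wedge y$ in $M$, and that $(x\wedge y)~R~y$ holds. The first is immediate from \autoref{lemma:mod}, which uses modularity of $M$ to pass a covering relation ``across'' the diamond: since by hypothesis $x\vee y$ covers $x$, the lemma gives that $y$ covers $x\wedge y$. The second is a direct application of the restriction axiom in \autoref{defn:tr}: from $x~R~(x\vee y)$ and $y\le x\vee y$, closure under restriction yields $(x\wedge y) = (x\wedge y)~R~y$. Combining these two gives $(x\wedge y)~R_{cov}~y$, as desired.

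There is no genuine obstacle here; the only subtle point is being careful to cite \autoref{lemma:mod} (which is where modularity of $M$ is actually used) to promote the $R$-relation $(x\wedge y)~R~y$ to a covering $R_{cov}$-relation. In a general lattice, restriction closure would still produce $(x\wedge y)~R~y$, but $y$ need not cover $x\wedge y$, so the output would not land in $R_{cov}$. The modular hypothesis on $M$ is therefore essential at precisely this point, and the proof amounts to assembling \autoref{lemma:mod}, the restriction axiom, and \autoref{lemma:3of4} in two or three lines.
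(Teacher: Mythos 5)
Your proof is correct and is essentially the paper's own argument: condition (1) of \autoref{defn:satcov} follows from the restriction axiom together with \autoref{lemma:mod} (which is exactly where modularity enters), and condition (2) is \autoref{lemma:3of4}. You have merely spelled out in more detail what the paper states in two sentences, and your remark about why modularity is needed to land in $R_{cov}$ rather than just $R$ is accurate.
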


\begin{proof}
Condition (1) of \autoref{defn:satcov} is an immediate consequence of the restriction rule and \autoref{lemma:mod}. Condition (2) follows from \autoref{lemma:3of4}.
\end{proof}

We now prove that any saturated cover $Q$ on a finite modular lattice $(M,\leq)$ generates a saturated transfer system $\langle Q\rangle$ on $M$. Our method relies on the existence of a \emph{grading} on $M$ in the sense of the following definition.

\begin{defn}
A \emph{graded poset} is a poset $(P,\leq)$ equipped with a rank function $\rho\colon P\to\mathbb{N}$ such that:
\begin{enumerate}
    \item[(1)] if $x<y$, then $\rho(x)<\rho(y)$, and
    \item[(2)] if $y$ covers $x$, then $\rho(y)=\rho(x)+1$
\end{enumerate}
\end{defn}

The following result is standard, and follows from \emph{Dedekind's modularity criterion}: modular lattices are precisely those lattices with no pentagonal sublattices. (See, for instance, \cite[Theorem 4.4]{blyth}.)

\begin{prop}
If $(M,\leq)$ is a finite modular lattice, then $M$ is a graded poset with rank function $\rho\colon M\to\mathbb{N}$ that maps each element $x$ to the number of covering relations required to travel from the minimal element of $M$ to $x$.\hfill\qedsymbol
\end{prop}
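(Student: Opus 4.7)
The plan is to construct $\rho$ as the length of a maximal chain from the bottom $\hat{0}$ to $x$, and the key point is showing this length is independent of the chosen chain. The engine behind this is the so-called \emph{diamond property} of modular lattices: if $y$ covers $x\wedge y$, then $x\vee y$ covers $x$ (and symmetrically). I would establish this first from the modular law: any $z$ with $x<z\le x\vee y$ satisfies $z = z\wedge(x\vee y) = x\vee(z\wedge y)$ by modularity (using $x\le z$), and since $x\wedge y \le z\wedge y \le y$ with $y$ covering $x\wedge y$, we get $z\wedge y\in\{x\wedge y,y\}$, forcing $z=x$ or $z=x\vee y$.

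Next, I would prove the Jordan--Dedekind chain condition: any two maximal chains between comparable elements $a\le b$ have the same length. Suppose we have chains $a = x_0 \lessdot x_1 \lessdot \cdots \lessdot x_m = b$ and $a = y_0 \lessdot y_1 \lessdot \cdots \lessdot y_n = b$. Induct on $\min(m,n)$. If $x_1 = y_1$, apply the inductive hypothesis from $x_1$ to $b$. Otherwise set $z = x_1 \vee y_1$; by the diamond property applied to the covering pairs above $a$, $z$ covers both $x_1$ and $y_1$. Pick any maximal chain from $z$ to $b$ of some length $k$; then $x_1 \lessdot z$ prefixed to it gives a maximal chain $x_1$-to-$b$ of length $k+1$, and by induction on the $x_1$-to-$b$ chains we get $m-1 = k+1$, i.e., $m = k+2$. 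Symmetrically $n = k+2$, so $m=n$.

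With Jordan--Dedekind in hand, I would define $\rho(x)$ to be the common length of all maximal chains from $\hat{0}$ to $x$ (well-defined, and finite since $M$ is finite). The two rank-function axioms are then immediate: if $x<y$, concatenating a maximal chain from $\hat{0}$ to $x$ with any maximal chain from $x$ to $y$ (of positive length) gives $\rho(x)<\rho(y)$; and if $y$ covers $x$, the chain from $x$ to $y$ has length exactly $1$, yielding $\rho(y)=\rho(x)+1$.

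The main obstacle is the diamond property and the inductive step in Jordan--Dedekind, but both are classical and short once modularity is invoked in the right place. Because the paper cites \cite{blyth} and flags this as standard, I would keep the exposition brief --- essentially stating the diamond property, remarking that Jordan--Dedekind follows by the standard induction, and then defining $\rho$ and checking the two axioms in one line each.
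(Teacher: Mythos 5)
Your proof is correct, but note that the paper does not actually prove this proposition: it is stated with a citation to \cite[Theorem 4.4]{blyth} and framed as a consequence of Dedekind's no-pentagon criterion, so you have supplied a self-contained argument where the paper defers to the literature. Your route is the classical one: first the covering (diamond) property --- which is the order-dual of the paper's \autoref{lemma:mod}, and your derivation of it from the modular law with $a=x$, $b=z$, $c=y$ is exactly right --- and then the Jordan--Dedekind chain condition by induction on $\min(m,n)$. The one place worth a sentence more of care is the inductive step in the case $x_1\ne y_1$: when you apply the hypothesis to the two chains from $y_1$ to $b$ (of lengths $n-1$ and $k+1$), you need $\min(n-1,k+1)<\min(m,n)$, which holds because the \emph{first} application already gave $k+1=m-1\le n-1$, so $\min(n-1,k+1)=m-1=\min(m,n)-1$; this ordering of the two applications matters and should be made explicit. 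With that noted, the definition of $\rho$ as the common chain length and the verification of the two grading axioms are immediate, exactly as you say. The trade-off between the two approaches is the usual one: the paper's citation keeps the exposition short and leans on a standard reference, while your argument makes the paper self-contained at the cost of half a page, and has the side benefit of isolating the diamond property, which is also the engine behind \autoref{lemma:mod} used elsewhere in \autoref{sec:matchstick}.
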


Given a grading on a poset, we get a natural notion of distance between elements:

\begin{defn}
If $(P,\le)$ is a graded poset with rank function $\rho$ and $x\le z$ are elements of $P$, we say the \emph{length of the interval $[x,z]$}, or the \emph{distance from $x$ to $z$}, is
\[
    \ell[x,z] := \rho(z)-\rho(x).
\]
\end{defn}

The length function on a finite modular lattice gives us a way to perform induction when proving the following crucial lemma.

\begin{lemma}\label{lemma:sat}
Suppose $(M,\leq)$ is a finite modular lattice, $Q$ is a saturated cover on $M$, $x\le y\le w\le z\in M$, and $x~\langle Q\rangle~z$. Then $y~\langle Q\rangle~ w$.
\end{lemma}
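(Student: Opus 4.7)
The plan is to induct on $\ell[x,z]$, relying on two structural facts about $\langle Q\rangle$. First, condition (1) of \autoref{defn:satcov} is precisely the restriction axiom applied to a covering relation, and (by \autoref{lemma:mod}) restrictions of covering relations along elements of a modular lattice are again covering relations, so $Q$ itself is already closed under restriction. \autoref{prop:gen} then tells us that $\langle Q\rangle$ is just the transitive closure of $Q$ together with reflexive relations. Second, since each $Q$-edge is a covering relation in the graded lattice $M$, a relation $x~\langle Q\rangle~z$ is equivalent to the existence of a $Q$-chain of covering relations $x=u_0~Q~u_1~Q~\cdots~Q~u_n=z$ with $n=\ell[x,z]$. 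I may therefore work with such a chain throughout.

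The base cases $\ell[x,z]\le 1$ are immediate. For the inductive step, if $w<z$ then the restriction axiom for the transfer system $\langle Q\rangle$ gives $x~\langle Q\rangle~w$ with $\ell[x,w]<n$, and the inductive hypothesis applied to the sandwich $x\le y\le w$ finishes. So I may assume $w=z$; the case $y=x$ is trivial, so I also assume $y>x$. If $u_1\le y$, then the tail $u_1~Q~\cdots~Q~u_n$ shows $u_1~\langle Q\rangle~z$ with $\ell[u_1,z]=n-1$, and induction with sandwich $u_1\le y\le z$ yields $y~\langle Q\rangle~z$. If instead $u_1\not\le y$, then $u_1\wedge y=x$ and \autoref{lemma:mod} says that $u_1\vee y$ covers $y$. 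When $u_1\vee y<z$, restriction gives $x~\langle Q\rangle~(u_1\vee y)$ with $\ell[x,u_1\vee y]=1+\ell[x,y]<n$, so induction with sandwich $x\le y\le u_1\vee y$ yields $y~\langle Q\rangle~(u_1\vee y)$, and induction applied to $u_1~\langle Q\rangle~z$ (length $n-1$) with sandwich $u_1\le u_1\vee y\le z$ yields $(u_1\vee y)~\langle Q\rangle~z$; transitivity concludes.

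The remaining case $u_1\vee y=z$ is the main obstacle, because the natural sub-intervals are no longer strictly smaller in a useful configuration. When $n\ge 3$, I pivot to the penultimate element $u_{n-1}$ of the chain: if $y\le u_{n-1}$, induction applied to $x~\langle Q\rangle~u_{n-1}$ (length $n-1$) with sandwich $x\le y\le u_{n-1}$ produces $y~\langle Q\rangle~u_{n-1}$, which concatenates with $u_{n-1}~Q~z$. If $y\not\le u_{n-1}$, modularity forces $y\vee u_{n-1}=z$ and gives an element $y^-:=y\wedge u_{n-1}$ covered by $y$; two inductive applications then suffice, first producing $y^-~\langle Q\rangle~u_{n-1}$ (and hence $y^-~\langle Q\rangle~z$), and then exploiting $\ell[y^-,z]=2<n$ to apply the inductive hypothesis to $y^-~\langle Q\rangle~z$ with sandwich $y^-\le y\le z$, yielding $y~\langle Q\rangle~z$.

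The only sub-case that cannot be reduced to a strictly smaller instance is $n=2$ with $u_1\vee y=z$; here $\ell[x,y]=1$ forces $y$ to cover $x$, so the four elements $x,u_1,y,z$ form a genuine covering diamond. Three of its edges are known to lie in $Q$: $x~Q~u_1$ and $u_1~Q~z$ come from the original chain, and $x~Q~y$ because the restriction $x~\langle Q\rangle~y$ is realized by a $Q$-chain of length one. Condition (2) of \autoref{defn:satcov} then supplies the missing fourth edge $y~Q~z$, completing the induction. This single application of the $3$-out-of-$4$ axiom is the essential combinatorial input provided by the saturated-cover definition; everything else in the argument is propagation of known relations via restriction, transitivity, and modularity.
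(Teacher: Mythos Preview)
Your proof is correct and rests on the same scaffold as the paper's—strong induction on $\ell[x,z]$, reduction to $w=z$ via restriction, and one essential appeal to the $3$-out-of-$4$ axiom on a covering diamond—but the internal organisation is genuinely different. The paper first isolates a uniform claim: every element $e\in[x,z]$ with $\ell[x,e]=n$ satisfies $e~\langle Q\rangle~z$, established by applying the $3$-out-of-$4$ rule to the diamond $(z_n\wedge e,\,z_n,\,e,\,z)$; it then builds an explicit ascending path $y=y_0<y_1<\cdots$ (via successive joins with the chain elements $z_i$) carrying an arbitrary $y$ up to that top layer, and splices two inductive facts together. Your argument instead pivots between the bottom edge $u_1$ and the top edge $u_{n-1}$ of the chosen $Q$-chain, recursing until the only irreducible configuration is the $n=2$ diamond, where $3$-out-of-$4$ is applied once. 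This avoids the path-construction bookkeeping and keeps the diamond axiom confined to a single base case, at the cost of a slightly more nested case split (your $n\ge 3$ sub-case must invoke the already-proved $n=2$ instance). One point worth making explicit for the reader: in your sub-case $u_1\vee y=z$, $y\not\le u_{n-1}$, the assertion $\ell[y^-,z]=2$ also needs that $z$ \emph{covers} $y$, which follows from the standing hypothesis $u_1\vee y=z$ together with $u_1\wedge y=x$ via the diamond isomorphism; you only record that $y$ covers $y^-$.
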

\begin{proof}
Fix $x\in M$. We perform induction on $\ell[x,z]$. The base case is true since $\ell[x,z] = 0$ implies $x=z$, and $\langle Q\rangle$ is reflexive.

For the induction hypothesis, fix $n\ge 0$ and assume the claim holds for any $z\ge w\ge y\ge x\in M$ such that $x~\langle Q\rangle~ z$ and $\ell[x,z]\leq n$. Now, say we have $z\ge x\in M$ such that $x~\langle Q\rangle~ z$ and $\ell[x,z]=n+1$. Since $x~\langle Q\rangle~ z$, there exists a path of covering relations
\[
  x=z_0~Q~z_1~Q~z_2~Q~\cdots~Q~z_{n+1}=z
\]
such that $\ell[x,z_i]=i$ for all $i$. For any $e\in[x,z]$ such that $\ell[x,e]=n$, we claim that $e~\langle Q\rangle~ z$. To see why this is true, note that if $e=z_n$ then $z_n~Q~z_{n+1}$ ensures $e~\langle Q\rangle~ z$. If $e\neq z_n$, then by the restriction rule, $x~\langle Q\rangle~ z$ and $e\leq z$ implies $(x\wedge e)~\langle Q\rangle~ e$. Since $x\leq e$, this means $x~\langle Q\rangle~ e$. We know $e$ and $z_n$ are incomparable because they are both distance $n$ from $x$. We also know $x\leq(z_n\wedge e)$ by the definition of a meet. Now, we have $z_n\wedge e,z_n\in[x,z_n]$ where $x~\langle Q\rangle~z_n$, $(z_n\wedge e)\leq z_n$, and $\ell[x,z_n]=n$, so by the induction hypothesis, $(z_n\wedge e)~\langle Q\rangle~ z_n$. The same argument can be used to show that $(z_n\wedge e)\leq e\in[x,e]$ implies $(z_n\wedge e)~\langle Q\rangle~ e$. Since $z_n,e,z_n\wedge e$, and $z_n\vee e=z$ form a covering diamond in $Q$ with three of four edges, it follows that the fourth edge is in $Q$, \emph{i.e.}, $e~Q~z$. This proves that all elements in $[x,z]$ that are distance $n$ from $x$ are $\langle Q\rangle$ related to $z$. 

Now fix $y,w\in[x,z]$ such that $y\leq w$. We need to prove that $y~\langle Q\rangle~ w$. We first consider the case where
\[
  \ell[x,y]\leq n\qquad\text{and}\qquad \ell[x,w]=n+1.
\]
Then $w=z$ and we need to prove $y~\langle Q\rangle~ z$. If $\ell[x,y]=n$, our argument from the previous paragraph ensures that $y~\langle Q\rangle~ z$, so we may assume $\ell[x,y]=i$ where $i<n$. We claim there exists $y'\in [x,z]$ such that $\ell[x,y']=n$ and $y\leq y'$. To see this, we will build path of covering relations
\[
  y=y_0<y_1<y_2<\cdots <y_{n-i}=y'
\]
in $[x,z]$ to connect $y$ and $y'$. Again let
\[
  x=z_0~Q~z_1~Q~z_2~Q~\cdots~Q~z_{n+1}=z
\]
be a chain of covering relations in $Q$. Our rule for constructing the path of $y_j$'s is as follows: 
\begin{enumerate}
    \item[(i)] Start by setting $j=0$.
    \item[(ii)] If $y_j=z_{i+j}$, then set $y_k=z_{i+k}$ for all $k>j$ and we are done; if not, then move on to step (iii).
    \item[(iii)] Given that $y_j\neq z_{i+j}$, we know that $y_j$ is incomparable to $z_{i+j}$ as they have the same rank. Set $y_{j+1}=y_j\vee z_{i+j}$, increment the value of $j$ by one, and go back to step (ii).
\end{enumerate}
We repeat the process above until we either satisfy step (ii), ending the process, or we increment $j$ enough times to reach $j=n-i$, at which point we stop the process and get a sequence where $y_{j+1}=y_j\vee z_{i+j}$ for all $j<n-i$. In either case, the resulting sequence connects $y$ to an element $y'\in[x,z]$ with distance $n$ from $x$. Since the restriction rule forces $x~\langle Q\rangle~ y'$, the fact that $y\leq y'\in[x,y']$ implies that $y~\langle Q\rangle~ y'$ by the inductive hypothesis. But of course, we know that $y'~\langle Q\rangle~ z$ from earlier observations regarding elements with distance $n$ from $x$, so by transitivity we have $y~\langle Q\rangle~ z$.

We now consider the case where both $\ell[x,y]$ and $\ell[x,w]$ are at most $n$. 
By restriction, $x~\langle Q\rangle~ w$. Thus, by the inductive hypothesis, $y\leq w\in[x,w]$ implies $y~\langle Q\rangle~ w$.

Finally, if $\ell[x,y]=\ell[x,w]=n+1$, then $y=w=z$ and this case is handled by the reflexivity of $\langle Q\rangle$. This completes the induction step and the proof.
\end{proof}

With this lemma in hand, we are now ready to show that saturated covers generate saturated transfer systems.

\begin{prop}\label{prop:cov_to_sys}
If $Q$ is a saturated cover on a finite modular lattice $(M,\leq)$, then $\langle Q\rangle$ is a saturated transfer system.
\end{prop}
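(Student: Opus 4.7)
The plan is short because the essential combinatorics has been absorbed into \autoref{lemma:sat}. Since $\langle Q\rangle$ is a transfer system by construction (see \autoref{defn:gen} and \autoref{prop:gen}), the only remaining obligation is to verify the saturation axiom from \autoref{defn:sat}.

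To that end, I would start from an instance of saturation: suppose $x~\langle Q\rangle~y$ and $x~\langle Q\rangle~z$ with $y\le z$. Because $\langle Q\rangle$ refines $\le$, we automatically have $x\le y$, so $x\le y\le z\le z$ sits inside the interval $[x,z]$. I would then invoke \autoref{lemma:sat} on the chain $x\le y\le z\le z$ (that is, with the ``$w$'' of the lemma specialized to $z$); the remaining hypothesis $x~\langle Q\rangle~z$ is given by assumption, so the lemma immediately yields $y~\langle Q\rangle~z$. This is exactly the saturation condition, so $\langle Q\rangle$ is a saturated transfer system.

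I do not anticipate any real obstacle in writing this out — the argument fits in a few lines. All the genuine difficulty (juggling the restriction condition (1) and the 3-out-of-4 condition (2) in \autoref{defn:satcov} against the grading on $M$ and performing induction on interval length) has already been handled inside the proof of \autoref{lemma:sat}. The present proposition amounts to recognizing that the ``interval-closure'' statement proved there is precisely the input needed to upgrade ``transfer system'' to ``saturated transfer system''.
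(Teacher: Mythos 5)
Your proposal is correct and matches the paper's proof essentially verbatim: both reduce the proposition to the single nontrivial case of the saturation axiom and dispatch it by applying \autoref{lemma:sat} to the chain $x\le y\le z$ inside the interval $[x,z]$, using only the hypothesis $x~\langle Q\rangle~z$. Nothing further is needed.
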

\begin{proof}
By \autoref{defn:gen}, $\langle Q\rangle$ is a transfer system. \autoref{lemma:sat} proves that $\langle Q\rangle$ is saturated because given $x<y<z\in M$, the only case of the 2-out-of-3 rule that is not handled by restriction or transitivity is the case where $x~\langle Q\rangle~ y$ and $x~\langle Q\rangle~ z$. In this case, we have $y\leq z\in[x,z]$, so \autoref{lemma:sat} says that $y~\langle Q\rangle~ z$.
\end{proof}

We need one more lemma before proving the main theorem of this section.

\begin{lemma}\label{lemma:Qres}
If $Q$ is a saturated cover on a finite modular lattice $(M,\le)$, then $Q$ is closed under restriction (ignoring identity relations forced by the restriction rule).
\end{lemma}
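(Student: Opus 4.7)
The plan is to leverage condition (1) of \autoref{defn:satcov} together with the fact that $Q$ consists only of covering relations. Condition (1) is exactly the restriction rule, but restricted to the case where $z = x\vee y$. So the whole task is to show that in our situation, the covering assumption \emph{forces} $z$ to coincide with $x\vee y$ (modulo the excluded identity case).

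More precisely, suppose $x~Q~z$ and $y\le z$; I want to conclude $(x\wedge y)~Q~y$. Because $x~Q~z$ and $Q$ consists of covering relations, $z$ covers $x$. Split into two cases. If $y\le x$, then $x\wedge y = y$, which gives an identity relation $y~Q~y$ — this is the case we are instructed to ignore. Otherwise $y\not\le x$, so $x < x\vee y$. On the other hand $y\le z$ together with $x\le z$ gives $x\vee y\le z$, so the chain $x < x\vee y \le z$ collapses to $x\vee y = z$ because $z$ covers $x$. Now apply condition (1) of \autoref{defn:satcov} directly: since $x~Q~(x\vee y)$, we obtain $(x\wedge y)~Q~y$, as desired.

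One should also note (for internal consistency, since $Q$ contains only covering relations) that $y$ actually covers $x\wedge y$; this follows from \autoref{lemma:mod} applied to the fact that $x\vee y = z$ covers $x$. The main potential obstacle — and the whole insight of the argument — is recognizing that in a covering situation the interval $[x,z]$ is too short to accommodate any intermediate element, so the join $x\vee y$ must coincide with $z$ and condition (1) applies verbatim. Nothing else is really going on, and no induction on length is needed here, in contrast to the proof of \autoref{lemma:sat}.
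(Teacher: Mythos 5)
Your proof is correct and follows essentially the same route as the paper's: both arguments use the fact that $Q$ consists of covering relations to force the chain $x < x\vee y \le z$ to collapse to $x\vee y = z$, at which point condition (1) of \autoref{defn:satcov} applies verbatim (the paper phrases this as a proof by contradiction, but the content is identical). Your explicit handling of the $y\le x$ case and the remark that \autoref{lemma:mod} guarantees $y$ covers $x\wedge y$ are both consistent with the paper's treatment.
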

\begin{proof}
Assume to the contrary that there are elements $b,\ell,r,t\in M$ such that $r~Q~t$, $\ell\leq t$, $b=\ell\wedge r\neq\ell$, but $b~{\cancel Q}~\ell$. Since $t$ covers $r$, we know $r<t$ such that there is no element $w\in M$ with $r<w<t$. Then, given any $e\in M$ such that $e\geq r$ and $e\geq\ell$, if $e=r$ then $\ell\leq r$ and $\ell\wedge r=\ell$, a contradiction. If $e\neq r$, then $e\geq t$. This means $t=\ell\vee r$, so $r~Q~t$ forces $b~Q~\ell$ by condition (1) of \autoref{defn:satcov}, a contradiction.
\end{proof}

We can now prove that saturated transfer systems on finite modular lattices are in bijection with saturated covers. It is this theorem which we view as a ``matchstick game'' whose solutions (saturated covers) enumerate saturated transfer systems on finite modular lattices. (We refer to this as a ``matchstick game'' because one can envision trying to lay down matchsticks  --- \emph{i.e.}, covering relations --- on the lattice in a way that satisfies the rules of \autoref{defn:satcov}.)

\begin{theorem}\label{thm:matchstick}
Let $Q$ be a subset of covering relations on a finite modular lattice $(M, \leq )$. Then, $Q$ is the set of covering relations within a saturated transfer system if and only if $Q$ is a saturated cover, and this correspondence provides a bijection between saturated covers and saturated transfer systems on $M$.
\end{theorem}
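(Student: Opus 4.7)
The plan is to split the statement into the biconditional characterization (of which subsets of covering relations arise as $R_{cov}$ for some saturated transfer system $R$) and the subsequent bijection. Nearly all ingredients are already in place, so the proof amounts to assembling them and supplying one additional calculation.

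For the forward direction of the biconditional, if $Q = R_{cov}$ for a saturated transfer system $R$, then $Q$ is a saturated cover by \autoref{prop:sys_to_cov}. For the reverse direction, given a saturated cover $Q$, \autoref{prop:cov_to_sys} shows that $\langle Q\rangle$ is a saturated transfer system on $M$, so it suffices to verify that the covering relations of $M$ lying in $\langle Q\rangle$ are exactly $Q$. The containment $Q\subseteq (\langle Q\rangle)_{cov}$ is immediate since $Q$ already consists of covering relations of $M$ and $Q\subseteq \langle Q\rangle$.

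The reverse containment $(\langle Q\rangle)_{cov} \subseteq Q$ is the key calculation. I would apply \autoref{prop:gen}, which constructs $\langle Q\rangle$ in three stages: reflexive closure, restriction closure, and then transitive closure. The reflexive closure only adds identity relations, which are never covering relations of $M$. By \autoref{lemma:Qres}, the restriction closure introduces no new non-identity relations beyond those already in $Q$. Finally, any genuinely new relation produced by transitive closure comes from a chain $x = x_0 \mathrel{Q} x_1 \mathrel{Q} \cdots \mathrel{Q} x_k = y$ of $k \geq 2$ non-identity $Q$ relations; since each $Q$-relation is a covering relation of $M$ and $M$ is a finite modular lattice (hence graded with rank function $\rho$), each step of the chain advances the rank by exactly one, giving $\rho(y) = \rho(x) + k \geq \rho(x) + 2$. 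Hence $y$ does not cover $x$ in $M$, so no new covering relation appears and $(\langle Q\rangle)_{cov} = Q$.

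To upgrade this to the bijection, I need to check that the assignments $R \mapsto R_{cov}$ and $Q \mapsto \langle Q\rangle$ are mutually inverse. The composite $R\mapsto R_{cov}\mapsto \langle R_{cov}\rangle = R$ is exactly \autoref{prop:sat_det_cov}, while the composite $Q\mapsto\langle Q\rangle\mapsto (\langle Q\rangle)_{cov} = Q$ was just established. The main obstacle throughout is the rank-based transitivity argument above, which is the one step that genuinely uses the modularity of $M$ (to guarantee the existence of a well-behaved rank function on $M$).
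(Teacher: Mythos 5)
Your proof is correct and follows essentially the same route as the paper: \autoref{prop:sys_to_cov} and \autoref{prop:cov_to_sys} for the two directions, \autoref{lemma:Qres} to control the restriction-closure stage, and \autoref{prop:sat_det_cov} for mutual inverseness. The only difference is that you spell out, via the rank function on the graded modular lattice, why transitive closure of covering relations cannot create new covering relations --- a step the paper's proof leaves implicit --- which is a welcome clarification rather than a departure.
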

\begin{proof}
Given a saturated cover $Q$, \autoref{lemma:Qres} tells us that $\langle Q\rangle$ is generated from $Q$ by first enforcing reflexivity on $Q$, and then enforcing transitivity on $Q$. Hence, $Q$ is indeed the set of covering relations of $\langle Q\rangle$. Then, \autoref{prop:cov_to_sys} states that $\langle Q\rangle$ is a saturated transfer system. 

Given a saturated transfer system with covering relations $Q$, \autoref{prop:sys_to_cov} tells us that $Q$ is a saturated cover.

Finally, by \autoref{prop:sat_det_cov}, saturated transfer systems are generated by their covering relations, so the assignments $Q\mapsto \langle Q\rangle$ and $R\mapsto R_{cov}$ are mutually inverse.
\end{proof}

\begin{example}\label{ex:cube}
\autoref{tab:satcov} lists all of the saturated covers for $[1]^3\cong \Sub(C_p\times C_q\times C_r)$ for $p,q,r$ distinct primes. Given a saturated cover $R$ on $[1]^3$, let $R_t$ denote the restriction of $R$ to the ``top'' face (with final coordinate $1$), and let $R_b$ denote the restriction of $R$ to the ``bottom'' face (with final coordinate $1$), both thought of as a saturated covers of $[1]^2$. The restriction rule for saturated covers implies that $R_t\le R_b$. This allows us to organize our enumeration according to the bottom face, and these are listed in the first column of the table. Each row of the table corresponds to a different choice of $R_t$ refining the bottom face. In the first row of the table we see all possible legal arrangements of vertical covering relations, and the subsequent rows only include those choices of verticals that are compatible with the given $R_t\le R_b$. Note that the total number of saturated covers for $[1]^3$ is $61$, which is indeed the number of interior operators on $[1]^3$ (see \autoref{rmk:closure}).
\end{example}

\newcommand{\squaresetup}{
    \coordinate (00) at (0,0);
    \coordinate (10) at (135:1);
    \coordinate (01) at (45:1);
    \coordinate (11) at ($(10)+(01)$);
    
    \draw[gray] (00) -- (10) -- (11) -- (01)--(00);
    \foreach \i in {(00),(01),(10),(11)}{
        \draw[white,fill = white] \i circle (.1);
        \draw[gray,fill = gray] \i circle (.05);
    }
    \draw (0,2) node {}; 
}

\newcommand{\cube}{ 
        \coordinate (000) at (0:0); 
        \coordinate (100) at (135:1); 
        \coordinate (001) at (45:1); 
        \coordinate (010) at (90:{sqrt(2)/2}); 
        \coordinate (110) at ($(100) + (010)$);
        \coordinate (101) at ($(100) + (001)$);
        \coordinate (011) at ($(010) + (001)$);
        \coordinate (111) at ($(100) + (010) + (001)$);

        \draw[gray] (000) -- (001);
        \draw[gray] (000) -- (010);
        \draw[gray] (000) -- (100);
        \draw[gray] (100) -- (110);
        \draw[gray] (100) -- (101);
        \draw[gray] (001) -- (101);
        \draw[gray] (001) -- (011);
        \draw[white, ultra thick] (010) -- (110);
        \draw[white, ultra thick] (010) -- (011);
        \draw[gray] (010) -- (110);
        \draw[gray] (010) -- (011);
        \draw[gray] (111) -- (011);
        \draw[gray] (111) -- (101);
        \draw[gray] (111) -- (110);

        \foreach \i in {(000),(100),(010),(001),(011),(101),(110),(111)}{
            \draw[white, fill =white] \i circle (.1);
            \draw[gray,fill = gray] \i circle (.05);
        }
}

\newcommand{\tlp}[2]{
    \draw[ultra thick,line cap =round] (#1)--(#2);
}

\newcommand{\betikz}[1]{ 
    \begin{tikzpicture}[scale = .5]
        #1
    \end{tikzpicture}
}

\begin{center}
    \begin{longtable}{c|cccccc}
    \caption{Saturated covers for $[1]^3$ organized according to bottom layer. Bolded covering relations are those included in each saturated cover.}\label{tab:satcov}\\
        Bottom &  \multicolumn{6}{c}{Saturated covers with this bottom layer}\\\hline
        \betikz{\squaresetup}  & 
        \betikz{\cube}&
        \betikz{\cube \tlp{000}{010}}&
        \betikz{\cube\tlp{100}{110}\tlp{000}{010}}&
        \betikz{\cube\tlp{001}{011}\tlp{000}{010}}&
        \betikz{\cube\tlp{001}{011}\tlp{100}{110}\tlp{000}{010}}&
        \betikz{\cube\tlp{001}{011}\tlp{100}{110}\tlp{000}{010}\tlp{101}{111}}
        \\\hline

        \betikz{\squaresetup \tlp{00}{01}}& 
        \betikz{\cube\tlp{000}{001}} &
        \betikz{\cube\tlp{000}{001}\tlp{000}{010}}&
        \betikz{\cube\tlp{000}{001}\tlp{000}{010}\tlp{100}{110}} &&&
        \\ &
        \betikz{\cube\tlp{000}{001}\tlp{011}{010}} &&&
        \betikz{\cube\tlp{000}{001}\tlp{011}{010}\tlp{000}{010}\tlp{001}{011}} &
        \betikz{\cube\tlp{000}{001}\tlp{011}{010}\tlp{000}{010}\tlp{001}{011}\tlp{100}{110}}&
        \betikz{\cube\tlp{000}{001}\tlp{011}{010}\tlp{000}{010}\tlp{001}{011}\tlp{100}{110}\tlp{101}{111}}     
        \\\hline

        \betikz{\squaresetup \tlp{00}{10}}&
        \betikz{\cube\tlp{000}{100}} &
        \betikz{\cube\tlp{000}{100}\tlp{000}{010}}&&
        \betikz{\cube\tlp{000}{100}\tlp{000}{010}\tlp{001}{011}} &&
        \\ &
        \betikz{\cube\tlp{000}{100}\tlp{110}{010}} &&
        \betikz{\cube\tlp{000}{100}\tlp{110}{010}\tlp{000}{010}\tlp{100}{110}}&&
        \betikz{\cube\tlp{000}{100}\tlp{110}{010}\tlp{000}{010}\tlp{100}{110}\tlp{001}{011}}&
        \betikz{\cube\tlp{000}{100}\tlp{110}{010}\tlp{000}{010}\tlp{100}{110}\tlp{001}{011}\tlp{101}{111}} \\\hline

        \betikz{\squaresetup \tlp{00}{10}\tlp{00}{01}}&
        \betikz{\cube \tlp{001}{000}\tlp{100}{000}}& 
        \betikz{\cube \tlp{001}{000}\tlp{100}{000}\tlp{000}{010}}&
        \\ &
        \betikz{\cube \tlp{001}{000}\tlp{100}{000}\tlp{110}{010}}&&
        \betikz{\cube \tlp{001}{000}\tlp{100}{000}\tlp{110}{010}\tlp{100}{110}\tlp{000}{010}}
        \\ &
        \betikz{\cube\tlp{001}{000}\tlp{100}{000}\tlp{010}{011}}&&&
        \betikz{\cube\tlp{001}{000}\tlp{100}{000}\tlp{010}{011}\tlp{000}{010}\tlp{001}{011}}
        \\ &
        \betikz{\cube\tlp{001}{000}\tlp{100}{000}\tlp{010}{110}\tlp{010}{011}}&&&&
        \betikz{\cube\tlp{001}{000}\tlp{100}{000}\tlp{010}{110}\tlp{010}{011}\tlp{000}{010}\tlp{100}{110}\tlp{001}{011}}&
        \betikz{\cube\tlp{001}{000}\tlp{100}{000}\tlp{010}{110}\tlp{010}{011}\tlp{000}{010}\tlp{100}{110}\tlp{001}{011}\tlp{101}{111}}

        \\\hline
        \betikz{\squaresetup \tlp{00}{01}\tlp{10}{11}}&
        \betikz{\cube\tlp{000}{001}\tlp{100}{101}}&
        \betikz{\cube\tlp{000}{001}\tlp{100}{101}\tlp{000}{010}}&
        \betikz{\cube\tlp{000}{001}\tlp{100}{101}\tlp{000}{010}\tlp{100}{110}}&
        \\ &
        \betikz{\cube\tlp{000}{001}\tlp{100}{101}\tlp{010}{011}}&&&
        \betikz{\cube\tlp{000}{001}\tlp{100}{101}\tlp{010}{011}\tlp{000}{010}\tlp{001}{011}}&
        \betikz{\cube\tlp{000}{001}\tlp{100}{101}\tlp{010}{011}\tlp{000}{010}\tlp{001}{011}\tlp{100}{110}}
        \\ &
        \betikz{\cube\tlp{000}{001}\tlp{100}{101}\tlp{010}{011}\tlp{110}{111}}&&&
        \betikz{\cube\tlp{000}{001}\tlp{100}{101}\tlp{010}{011}\tlp{110}{111}\tlp{000}{010}\tlp{001}{011}}&&
        \betikz{\cube\tlp{000}{001}\tlp{100}{101}\tlp{010}{011}\tlp{110}{111}\tlp{000}{010}\tlp{001}{011}\tlp{100}{110}\tlp{101}{111}}

        \\\hline
        \betikz{\squaresetup \tlp{00}{10}\tlp{01}{11}}&
        \betikz{\cube\tlp{000}{100}\tlp{001}{101}}&
        \betikz{\cube\tlp{000}{100}\tlp{001}{101}\tlp{000}{010}}&&
        \betikz{\cube\tlp{000}{100}\tlp{001}{101}\tlp{000}{010}\tlp{001}{011}}
        \\ &
        \betikz{\cube\tlp{000}{100}\tlp{001}{101}\tlp{010}{110}} &&
        \betikz{\cube\tlp{000}{100}\tlp{001}{101}\tlp{010}{110}\tlp{000}{010}\tlp{100}{110}} && 
        \betikz{\cube\tlp{000}{100}\tlp{001}{101}\tlp{010}{110}\tlp{000}{010}\tlp{100}{110}\tlp{001}{011}}
        \\ &
        \betikz{\cube\tlp{000}{100}\tlp{001}{101}\tlp{010}{110}\tlp{011}{111}} &&
        \betikz{\cube\tlp{000}{100}\tlp{001}{101}\tlp{010}{110}\tlp{011}{111}\tlp{000}{010}\tlp{100}{110}} &&&
        \betikz{\cube\tlp{000}{100}\tlp{001}{101}\tlp{010}{110}\tlp{011}{111}\tlp{000}{010}\tlp{100}{110}\tlp{001}{011}\tlp{101}{111}}

        \\\hline
        \betikz{\squaresetup \tlp{00}{01}\tlp{10}{11}\tlp{01}{11}\tlp{00}{10}}& 
        \betikz{\cube\tlp{000}{001}\tlp{000}{100}\tlp{100}{101}\tlp{001}{101}}&
        \betikz{\cube\tlp{000}{001}\tlp{000}{100}\tlp{100}{101}\tlp{001}{101}\tlp{000}{010}}
        \\ &
        \betikz{\cube\tlp{000}{001}\tlp{000}{100}\tlp{100}{101}\tlp{001}{101}\tlp{010}{110}} &&
        \betikz{\cube\tlp{000}{001}\tlp{000}{100}\tlp{100}{101}\tlp{001}{101}\tlp{010}{110}\tlp{000}{010}\tlp{100}{110}}
        \\ &
        \betikz{\cube\tlp{000}{001}\tlp{000}{100}\tlp{100}{101}\tlp{001}{101}\tlp{010}{011}} &&&
        \betikz{\cube\tlp{000}{001}\tlp{000}{100}\tlp{100}{101}\tlp{001}{101}\tlp{010}{011}\tlp{000}{010}\tlp{001}{011}}
        \\ &
        \betikz{\cube\tlp{000}{001}\tlp{000}{100}\tlp{100}{101}\tlp{001}{101}\tlp{010}{110}\tlp{010}{011}} &&&&
        \betikz{\cube\tlp{000}{001}\tlp{000}{100}\tlp{100}{101}\tlp{001}{101}\tlp{010}{110}\tlp{010}{011}\tlp{100}{110}\tlp{000}{010}\tlp{001}{011}}
        \\ &
        \betikz{\cube\tlp{000}{001}\tlp{000}{100}\tlp{100}{101}\tlp{001}{101}\tlp{010}{110}\tlp{011}{111}} &&
        \betikz{\cube\tlp{000}{001}\tlp{000}{100}\tlp{100}{101}\tlp{001}{101}\tlp{010}{110}\tlp{011}{111}\tlp{000}{010}\tlp{100}{110}}
        \\ &
        \betikz{\cube\tlp{000}{001}\tlp{000}{100}\tlp{100}{101}\tlp{001}{101}\tlp{010}{011}\tlp{110}{111}} &&&
        \betikz{\cube\tlp{000}{001}\tlp{000}{100}\tlp{100}{101}\tlp{001}{101}\tlp{010}{011}\tlp{110}{111}\tlp{000}{010}\tlp{001}{011}}
        \\ &
        \betikz{\cube\tlp{000}{001}\tlp{000}{100}\tlp{100}{101}\tlp{001}{101}\tlp{010}{011}\tlp{010}{110}\tlp{110}{111}\tlp{011}{111}} &&&&&
        \betikz{\cube\tlp{000}{001}\tlp{000}{100}\tlp{100}{101}\tlp{001}{101}\tlp{010}{011}\tlp{010}{110}\tlp{110}{111}\tlp{011}{111}\tlp{000}{010}\tlp{100}{110}\tlp{001}{011}\tlp{101}{111}}
    \end{longtable}
\end{center}

\begin{remark}
Our method of record-keeping in the previous example also suggests an enumeration of saturated transfer systems on lattices of the form $P\times [1]$ in terms of certain structures on $P$. Indeed, each saturated cover $R$ for $P\times [1]$ restricts to an interval $R_t\le R_b$ of saturated covers for $P$. Given such an interval, one must enumerate the collections of ``vertical'' covering relations that are closed under restriction and still satisfy 3-out-of-4. These vertical relations can be put in bijection with antichains in $P$ (subsets of mutually incomparable elements) satisfying certain rules. We have not yet been able to turn this into an effective tool for enumeration, so we leave further details to the reader.
\end{remark}

\section{Categorical properties of transfer systems}\label{sec:cat}

We now turn to some categorical properties of transfer systems, leading to a new recursion (\autoref{thm:fusion}) for transfer systems on a particular pushout of two lattices which we call the \emph{fusion} (see \autoref{defn:fusion}). For completeness, we begin this discussion by considering the status of $\Tr$ as a functor.

The assignment sending a bounded lattice $P$ to its lattice of transfer systems $\Tr(P)$ turns out to be a functor, though one needs to be careful with morphisms. Given a monotone map of lattices $f\colon P \to Q$, we can form $\Tr(f)\colon \Tr(P)\to \Tr(Q)$ by setting
\[
  [\Tr(f)](R)=\langle (f(x),f(y)) \mid x,y\in P, x~R~y\rangle
\]
where $R\in \Tr(P)$ and $\langle ~\rangle$ is as in \autoref{defn:gen}. It is clear $\Tr(id_P)=id_{\Tr(P)}$, but it is unclear if $\Tr(f \circ g)=\Tr(f)\circ \Tr(g)$ for composable maps $f$ and $g$. In fact, it turns out that if we only require monotone maps, then we could have extra relations in $\Tr(f)\circ \Tr(g)(R)$ that may not be present in $\Tr(f \circ g)(R)$. We illustrate this in \autoref{ex:composablemaps}.

\begin{example}\label{ex:composablemaps}
    Consider the following composition of monotone maps of lattices:
    \begin{center}
    \begin{tikzcd}[row sep=1.5em, column sep=1.5em]
    &
    \cdot \arrow[rrrrrr,blue,"f"{blue}]&
    &
    &
    &
    &
    &
    \cdot \arrow[rrrrrr,red,"g"{red}]&
    &
    &
    &
    &
    &
    \cdot \\
    \cdot \arrow[ur,dash] \arrow[rrrrrr,blue,bend right=20]&
    & 
    \cdot \arrow[ul,dash] \arrow[rrrrrr,blue,bend left=20]&
    &
    &
    &
    \cdot \arrow[ur,dash]\arrow[rrrrrr,red,bend right=20]&
    &
    \cdot \arrow[ul,dash] \arrow[rrrrrr,red,bend left=20]&
    &
    &
    &
    \cdot \arrow[ur,dash]&
    & 
    \cdot \arrow[ul,dash]\\
    &
    \cdot \arrow[ul,dash] \arrow[ur,dash] \arrow[blue,drrrrrr]&
    &
    &
    &
    &
    &
    \cdot \arrow[ul,dash] \arrow[ur,dash] \arrow[red,drrrrrr]&
    &
    &
    &
    &
    &
    \cdot \arrow[ul,dash] \arrow[ur,dash]\\
    &
    &
    &
    &
    &
    &
    &
    \cdot \arrow[u,dash] \arrow[red,drrrrrr]&
    &
    &
    &
    &
    &
    \cdot \arrow[u,dash]\\
    &
    &
    &
    &
    &
    &
    &
    &
    &
    &
    &
    &
    &
    \cdot \arrow[u,dash]
    \end{tikzcd}
    \end{center}
\end{example}
Consider the transfer system $R$ on $[1]\times [1]$ shown below. One can verify $\Tr(g\circ f)(R)$ and $\Tr(g)\circ \Tr(f)(R)$ are given by the illustrated transfer systems. In particular, note the extra relations added in $\Tr(g) \circ \Tr(f)(R)$ due to the intermediate closure operation.
\begin{center}
\begin{multicols}{3}
\begin{center}
\underline{$R\in \Tr([1]\times[1])$}
\end{center}
\columnbreak
\begin{center}
\underline{$\Tr(g \circ f)(R)$}
\end{center}
\columnbreak
\begin{center}
\underline{$\Tr(g)\circ \Tr(f)(R)$}
\end{center}
\end{multicols}
\end{center}
\begin{center}
\begin{multicols}{3}
\begin{center}
\begin{tikzcd}[row sep=1.5em, column sep=1.5em]
    &
    \cdot \\
    \cdot &
    &
    \cdot \arrow[ul,dash]\\
    &
    \cdot \arrow[ul,dash]
\end{tikzcd}
\end{center}
\columnbreak
\begin{center}
\begin{tikzcd}[row sep=1.5em, column sep=1.5em]
    &
    \cdot \\
    \cdot &
    &
    \cdot \arrow[ul,dash]\\
    &
    \cdot \arrow[ul,dash]\\
    &
    \cdot\\
    &
    \cdot \arrow[uuul,dash] \arrow[uu,dash,bend right=30] \arrow[u,dash]
\end{tikzcd}
\end{center}
\columnbreak
\begin{center}
\begin{tikzcd}[row sep=1.5em, column sep=1.5em]
    &
    \cdot \\
    \cdot &
    &
    \cdot \arrow[ul,dash]\\
    &
    \cdot \arrow[ul,dash]\\
    &
    \cdot \arrow[u,dash] \arrow[uul,dash]\\
    &
    \cdot \arrow[uuul,dash] \arrow[uu,dash,bend right=30] \arrow[u,dash]
\end{tikzcd}
\end{center}
\end{multicols}
\end{center}

This example shows that $\Tr(f\circ g)\neq \Tr(f) \circ \Tr(g)$ for general monotone maps $f$, $g$. It turns out that the phenomenon observed above is essentially the only obstruction to preservation of composition. To see the general case, let $P,Q,L$ be lattices and $f \colon P \to Q$, $g \colon Q \to L$ be monotone maps. Also let $R \in \Tr(P)$. We will use $fR$ to denote the relation $\{(f(x),f(y)) \mid x,y\in P, x~R~y\} \cup \{(z,z): z\in Q\}$ (not taking restrictive or transitive closure) and $f_{*}R$ to denote the relation $\langle fR \rangle=\Tr(f)(R)$. Consider a restriction diagram in $P$ of the form
\begin{center}
\begin{tikzcd}[ampersand replacement=\&]
    x \arrow[r] \&
    z \\
    x\wedge y \arrow[u,"R"] \arrow[r]\&
    y. \arrow[u, "R"]
\end{tikzcd}
\end{center}

Then we can step by step find all relations in $L$ that are added by either $\Tr(g \circ f)$ or $\Tr(g)\circ \Tr(f)$ and compare. Doing this, we obtain the following diagrams:

\begin{center}
\begin{multicols}{2}
\begin{center}
\underline{$\Tr(g \circ f)$}
\end{center}
\columnbreak
\begin{center}
\underline{$\Tr(g)\circ \Tr(f)$}
\end{center}
\end{multicols}
\vspace{0.4cm}
\begin{multicols}{2}
    \begin{center}
    \begin{tikzcd}[ampersand replacement=\&,row sep=1.2em, column sep=0.8em,transform canvas={scale=0.85}]
    \&
    \&
    gfx \arrow[r] \&
    gfz \\
    \&
    \&
    gfx\wedge gfy \arrow[u,"(gf)_{*}R"] \arrow[r]\&
    gfy \arrow[u, "gfR"]\\
    \&
    g(fx\wedge fy) \arrow[uur,bend left=30] \arrow[urr, bend right=30] \arrow[ur] \\
    gf(x\wedge y) \arrow[uuurr,bend left=40, "gfR"] \arrow[uurrr, bend right=40] \arrow[ur, "(gf)_{*}R"swap] \arrow[uurr, bend right=30, "(gf)_{*}"swap]
    \end{tikzcd}
    \end{center}
    \columnbreak
    \begin{center}
    \begin{tikzcd}[ampersand replacement=\&,row sep=1.2em, column sep=0.8em,transform canvas={scale=0.85}]
    \&
    \&
    gfx \arrow[r] \&
    gfz \\
    \&
    \&
    gfx\wedge gfy \arrow[u,"(gf)_{*}R"] \arrow[r]\&
    gfy \arrow[u, "gfR"]\\
    \&
    g(fx\wedge fy) \arrow[uur,red, bend left=30,"gf_{*}R" red] \arrow[urr, bend right=30] \arrow[ur, "g_{*}f_{*}R" red, red] \\
    gf(x\wedge y) \arrow[uuurr,bend left=40, "gfR"] \arrow[uurrr, bend right=40] \arrow[ur, "gf_{*}R", "(gf)_{*}R"swap] \arrow[uurr, bend right=30, "(gf)_{*}"swap]
\end{tikzcd}   
\end{center}
\end{multicols}
\end{center}
\vspace{2cm}

So if we compare, in the $\Tr(g)\circ\Tr(f)$ case, we get two extra maps due to the intermediate closure; 
$g(fx\wedge fy) \xrightarrow{g_{*f_{*R}}}gfx\wedge gfy$ and $g(fx \wedge fy) \xrightarrow{gf_{*}R} gfx$ as seen in red. Requiring our monotone maps to preserve meets is sufficient to get rid of this issue because everything collapses to the typical restriction square in $L$ of the form

\begin{center}
\begin{tikzcd}[ampersand replacement=\&]
    gfx \arrow[r] \&
    gfz \\
    gfx\wedge gfy \arrow[u,"(gf)_{*}R"] \arrow[r]\&
    gfy. \arrow[u, "gfR"]
\end{tikzcd}
\end{center}

It turns out that requiring our maps to preserve meets is enough to guarantee functorality. (The reader should note the maps $f$ and $g$ fail to preserve meets in \autoref{ex:composablemaps}.) Let $\mathbf{Lat}_m$ and $\mathbf{Lat}_{\wedge}$ be the categories of (bounded) lattices with monotone maps and meet preserving monotone maps, respectively. We have the following functorality statement.

\begin{theorem}\label{thm:functor}
The assignment $\Tr\colon \mathbf{Lat}_{\wedge}\to \mathbf{Lat}_{m}$ taking $P$ to $\Tr(P)$ and a meet-preserving monotone map $f\colon P\to Q$ to
\[
\begin{aligned}
    \Tr(f)\colon \Tr(P)&\longrightarrow \Tr(Q)\\
    R&\longmapsto \langle (f(x),f(y))\mid x,y\in P, x~R~y\rangle
\end{aligned}
\]
is a functor.
\end{theorem}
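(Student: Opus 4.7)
The plan is to verify the three functor axioms. That $\Tr(f) \colon \Tr(P) \to \Tr(Q)$ is well-defined and monotone follows immediately from the definition: $\langle fR \rangle$ is by construction a transfer system, and $R \subseteq R'$ implies $fR \subseteq fR'$ and hence $\langle fR \rangle \subseteq \langle fR' \rangle$. Preservation of the identity, $\Tr(\id_P) = \id_{\Tr(P)}$, is similarly immediate: since $R$ is already closed under the transfer system axioms, $\langle R \rangle = R$. The substantive content is preservation of composition: for composable $f \colon P \to Q$ and $g \colon Q \to L$ in $\mathbf{Lat}_\wedge$ and any $R \in \Tr(P)$, one must show that $\Tr(g \circ f)(R) = \Tr(g)(\Tr(f)(R))$.

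I would establish this equality by double inclusion. The forward inclusion $\langle gfR \rangle \subseteq \langle g \langle fR \rangle \rangle$ requires no meet-preservation: $gfR \subseteq g\langle fR \rangle \subseteq \langle g \langle fR \rangle \rangle$, and since the rightmost set is a transfer system, it contains the closure $\langle gfR \rangle$. For the reverse inclusion, because $\langle gfR \rangle$ is itself a transfer system, it suffices to check the containment $g \langle fR \rangle \subseteq \langle gfR \rangle$. I would prove this by structural induction on Rubin's three-step construction of $\langle fR \rangle$ from \autoref{prop:gen}: one starts with $fR$, closes under reflexivity, then under restriction, and finally under transitivity. The base case and the reflexivity and transitivity steps are routine, in each case leveraging the corresponding closure property of $\langle gfR \rangle$.

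The main obstacle is the restriction step, which is precisely the interaction diagnosed by \autoref{ex:composablemaps}. Suppose inductively that $(u,v) \in \langle fR \rangle$ satisfies $(g(u), g(v)) \in \langle gfR \rangle$, and let $w \le v$ be any element of $Q$; the restriction closure adds $(u \wedge w, w)$ to $\langle fR \rangle$, and its image under $g$ is $(g(u \wedge w), g(w)) = (g(u) \wedge g(w), g(w))$ precisely because $g$ preserves meets. Since $g(w) \le g(v)$ by monotonicity, the restriction axiom for $\langle gfR \rangle$ applied to the inductive pair $(g(u), g(v))$ yields $(g(u) \wedge g(w), g(w)) \in \langle gfR \rangle$, closing the induction. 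Without the meet-preservation hypothesis, $g(u \wedge w)$ and $g(u) \wedge g(w)$ can differ, and the diagnostic diagram following \autoref{ex:composablemaps} identifies this as the exact mechanism by which spurious relations appear in $\Tr(g) \circ \Tr(f)(R)$ beyond those present in $\Tr(g \circ f)(R)$.
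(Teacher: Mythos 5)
Your proof is correct and takes exactly the route the paper intends: the paper leaves this argument to the reader with only the hint that \autoref{prop:gen} is the key ingredient, and your double-inclusion argument with induction over Rubin's three-step closure supplies precisely the omitted details. In particular, you correctly isolate the restriction step as the one place where meet-preservation of $g$ is needed, matching the diagnosis in the discussion surrounding \autoref{ex:composablemaps}.
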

\begin{proof}
We leave the proof of this to the reader, noting that \autoref{prop:gen} is a useful ingredient.
\end{proof}

\begin{remark}
It is not the case that $\Tr$ is an endofunctor on $\mathbf{Lat}_{\wedge}$. To see why, consider the example $f\colon [2]\to [1]\times [1]$ given by $f(0)=(0,0)$, $f(1)=(1,0)$, $f(2)=(1,1)$. One can verify $f$ is meet-preserving, but $\Tr(f)$ is not.
\end{remark}

The fact that $\Tr$ is a functor grants us new categorical territory to explore. We record a quick result here, and we make a remark on products at the end of the section.

\begin{prop}
    The functor $\Tr\colon \mathbf{Lat}_{\wedge}\to \mathbf{Lat}_{m}$ is not essentially surjective, \emph{i.e.}, not every lattice is isomorphic to the lattice of transfer systems of some lattice.
\end{prop}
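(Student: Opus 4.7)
The plan is to exhibit the three-element chain $[2] = \{0 < 1 < 2\}$ as a lattice not in the essential image of $\Tr$. Since every three-element bounded lattice is isomorphic to $[2]$ (the middle element must be comparable to both extremes), it suffices to show that $|\Tr(P)| \neq 3$ for every finite lattice $P$.

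The key step is to establish the cardinality bound $|\Tr(P)| \geq |P|$. For each $x \in P$, define $R_x := \langle (\bot, x)\rangle$, the transfer system generated by the single relation from the minimum $\bot$ of $P$ to $x$. By \autoref{prop:gen}, $R_x$ is built by enforcing reflexivity, then restriction, then transitivity. Restriction forces $(\bot \wedge y, y) = (\bot, y) \in R_x$ for every $y \leq x$, and no further relations arise (transitivity produces nothing new, since $\bot$ is the only non-reflexive source). Hence $R_x$ consists of the reflexive pairs together with $\{(\bot, y) : y \leq x\}$. Since the set $\{y \in P : \bot~R_x~y\}$ is precisely the down-set of $x$, and down-sets determine their maxima, the transfer systems $R_x$ are pairwise distinct as $x$ ranges over $P$. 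This yields $|\Tr(P)| \geq |P|$.

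To finish, I would proceed by cases on $|P|$. If $|P| \leq 3$, then $P$ is isomorphic to one of the one-point lattice, $[1]$, or $[2]$, giving $|\Tr(P)| = 1$, $2$, or $5$ respectively---the last of these is visible in \autoref{fig:tamari}. If $|P| \geq 4$, the bound above forces $|\Tr(P)| \geq 4$. In no case does $|\Tr(P)|$ equal $3$, so $[2]$ is not isomorphic to $\Tr(P)$ for any finite lattice $P$, and $\Tr$ fails to be essentially surjective.

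The main obstacle is justifying the cardinality bound, which amounts to an explicit computation of $R_x$ via \autoref{prop:gen} together with the observation that distinct elements of $P$ have distinct principal down-sets. Both are elementary, so I do not expect serious difficulty in executing the proof; one could also swap in $[1]^2$ as the counterexample lattice at the cost of a small-case computation showing $|\Tr([3])|, |\Tr([1]^2)| \neq 4$.
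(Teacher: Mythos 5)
Your proof is correct and targets the same counterexample as the paper, which merely asserts that no lattice $P$ satisfies $\Tr(P)\cong[2]$ and leaves the details to the reader. Your bound $|\Tr(P)|\ge|P|$, obtained from the pairwise distinct principal transfer systems $R_x=\langle(\perp,x)\rangle$ computed via \autoref{prop:gen}, together with the small-case check $|\Tr([2])|=5$, is a clean and complete way to supply those omitted details.
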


\begin{proof}
One can show there exists no lattice $P$ such that $\Tr(P)\cong [2]$. We leave the details to the reader.
\end{proof}

\begin{remark}
    It is not known if $\Tr$ is essentially injective, \emph{i.e.}, if there are nonisomorphic $P,Q$ such that $\Tr(P)\cong \Tr(Q)$.
\end{remark}

We now define a binary operation $*$ on two (bounded) lattices $P,Q$ that we call \emph{fusion}. This corresponds to simply gluing together the top and bottom elements in $P$ and $Q$ while leaving everything else unchanged.  In \autoref{sec:CpCp}, we will see that this operation arises naturally when considering a certain class of subgroup lattices.

\begin{defn}\label{defn:fusion}
Let $(P,\leq_{P}),(Q,\leq_{Q})$ be two bounded lattices with $\top,\perp$ as the top and bottom elements respectively. We assume that $P\cap Q=\{\top,\perp\}$. The \emph{fusion} of $P$ and $Q$ is the bounded lattice with underlying set $P*Q :=P\cup Q$ and order defined by ${\le_{P*Q}} := {\leq_{P}}\cup {\leq_{Q}}$.
\end{defn}
\begin{example}
The following diagram depicts the Hasse diagram of the fusion of two lattices in terms of the original Hasse diagrams.

    \begin{center}
    \begin{tikzcd}[row sep=0.75em, column sep=0.75em]
    &
    \top \\
    \cdot \arrow[ur,dash,blue] &
    &
    \cdot\arrow[ul,dash,blue] \\
    \cdot \arrow[u,dash,blue]&
    &
    \cdot \arrow[u,dash,blue]\\
    &
    \perp \arrow[ur,dash,blue] \arrow[ul,dash,blue]
    \end{tikzcd}
    \quad
    $*$
    \quad
    \begin{tikzcd}[row sep=0.75em, column sep=0.75em]
    \top \\
    \cdot \arrow[u,dash,red]\\
    \cdot \arrow[u,dash,red]\\
    \perp \arrow[u,dash,red]
    \end{tikzcd}
    \quad
    $=$
    \quad
    \begin{tikzcd}[row sep=0.75em, column sep=0.75em]
    &
    &
    \top \\
    \cdot \arrow[urr,dash,blue] &
    \cdot\arrow[ur,dash,blue] &
    &
    \cdot \arrow[ul,red,dash]\\
    \cdot \arrow[u,dash,blue]&
    \cdot \arrow[u,dash,blue]&
    &
    \cdot \arrow[u,dash,red]\\
    &
    &
    \perp \arrow[ul,dash,blue] \arrow[ull,dash,blue] \arrow[ur,dash,red]
    \end{tikzcd}
    \end{center}
\end{example}
We record two categorical facts about fusion here, leaving the details to the interested reader.

\begin{prop}
We have the following categorical properties of fusion:
\begin{enumerate}[(a)] 
    \item The fusion $P*Q$ is the pushout of the diagram $Q\xleftarrow{\top\mapsfrom 1, \perp\mapsfrom 0}[1]\xrightarrow{0\mapsto \perp, 1\mapsto \top}P$ in $\mathbf{Lat}_{m}$.
    \item $(\mathbf{Lat}_{m},*,[1])$ is symmetric monoidal.\hfill \qedsymbol
\end{enumerate} 
\end{prop}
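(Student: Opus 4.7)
The plan is to prove part (a) by directly verifying the universal property of the pushout, and then deduce part (b) by observing that all coherence data can be taken to be identities on underlying sets.

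For (a), the candidate cocone consists of the natural inclusions $\iota_P\colon P \to P*Q$ and $\iota_Q\colon Q \to P*Q$. Each is monotone because $\leq_{P*Q}$ restricts to $\leq_P$ on $P$ and to $\leq_Q$ on $Q$, and the two inclusions agree on the image of $[1]$ since both send $0$ and $1$ to the common $\perp$ and $\top$ of $P*Q$. For the universal property, given monotone maps $f_P\colon P \to L$ and $f_Q\colon Q \to L$ with $f_P(\perp) = f_Q(\perp)$ and $f_P(\top) = f_Q(\top)$, define $h\colon P*Q \to L$ piecewise by $h|_P = f_P$ and $h|_Q = f_Q$. The compatibility hypothesis makes $h$ well-defined on $P \cap Q = \{\perp,\top\}$. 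Monotonicity of $h$ follows from that of $f_P$ and $f_Q$ because every nontrivial relation in $\leq_{P*Q} = \leq_P \cup \leq_Q$ lies in one of the two summands; uniqueness is immediate since $h$ is forced on $P$ and $Q$.

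For (b), each piece of the symmetric monoidal data can be chosen to be the identity on underlying sets. The unit axiom holds on the nose because $[1] = \{\perp < \top\}$ contributes no elements beyond the bounds already present in $P$, so $P*[1] = P = [1]*P$. Symmetry holds because $P \cup Q = Q \cup P$ and $\leq_P \cup \leq_Q = \leq_Q \cup \leq_P$. Associativity holds because both $(P*Q)*R$ and $P*(Q*R)$ have underlying set $P \cup Q \cup R$ (with pairwise intersections all equal to $\{\perp,\top\}$) and combined order $\leq_P \cup \leq_Q \cup \leq_R$. With all associators, unitors, and the braiding taken to be identities, the pentagon, triangle, and hexagon axioms are then automatic.

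The main subtlety is bookkeeping rather than mathematics: the definition of fusion presupposes $P \cap Q = \{\perp,\top\}$, a convention that must be arranged by passing to isomorphic copies whenever one wishes to form an iterated fusion. This is cleanest to handle via the pushout characterization from (a): since the pushout is defined up to unique isomorphism, any set-theoretic overlap can be absorbed into the canonical comparison maps, and the on-the-nose identifications used in (b) then become canonical isomorphisms whose coherence is guaranteed by the universal property.
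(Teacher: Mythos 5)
The paper itself leaves this proposition unproved (``leaving the details to the interested reader''), so there is no argument to compare against; judged on its own terms, your verification of part (a) is correct and complete: the piecewise map $h$ is well defined by the cocone condition, monotone because ${\le_{P*Q}}={\le_P}\cup{\le_Q}$, and clearly unique, and the universal property is checked against arbitrary objects of $\mathbf{Lat}_m$, which is all that is needed.

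Part (b) has a genuine gap: you verify the unit, symmetry, associativity, and coherence identities at the level of objects, but you never address the fact that a monoidal structure requires $*$ to be a \emph{bifunctor} $\mathbf{Lat}_m\times\mathbf{Lat}_m\to\mathbf{Lat}_m$, i.e.\ you must define $f*g$ for morphisms and check functoriality. This is the only nontrivial point, and it actually fails for arbitrary monotone maps: given $f\colon P\to P'$ and $g\colon Q\to Q'$, the obvious piecewise definition of $f*g$ requires $f(\perp_P)$ and $g(\perp_Q)$ to coincide in $P'*Q'$, which happens only when both are extremal. Concretely, take $P=Q=P'=Q'=[2]$, $f$ the constant map at $1$, and $g=\mathrm{id}$; then $\iota_{P'}\circ f$ and $\iota_{Q'}\circ g$ disagree at $\perp$, so the pushout universal property from (a) induces no map $f*g$, and ad hoc fixes such as $(f*g)(\perp)=f(\perp)\wedge g(\perp)$ destroy functoriality. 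This is the same phenomenon the paper flags in the remark immediately following the proposition (fusion is \emph{not} the coproduct in $\mathbf{Lat}_m$ because morphisms need not preserve bounds). The statement of (b) is salvageable --- and your object-level checks then suffice --- if one restricts to the subcategory of $\mathbf{Lat}_m$ whose morphisms preserve $\top$ and $\perp$, where $f*g$ is well defined piecewise and functorial; as written, your proof (and arguably the proposition) is missing exactly this point. Your closing paragraph on absorbing the set-theoretic convention $P\cap Q=\{\top,\perp\}$ into canonical isomorphisms via the pushout is the right instinct, but note that once the structure maps are only canonical isomorphisms rather than identities, the coherence axioms are no longer ``automatic'' by fiat --- they follow from uniqueness in the universal property of (a), which is worth saying explicitly.
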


We caution the reader that $P*Q$ is not the pushout of $Q\leftarrow [1]\to P$ in $\mathbf{Lat}_\wedge$. While we do have a diagram
    \begin{center}
    \begin{tikzcd}
        \left[1\right] \arrow[r,] \arrow[d] &
        P \arrow [d,"\iota_{P}"]\\
        Q \arrow[r,"\iota_{Q}"] &
        P*Q
    \end{tikzcd}
    \end{center}
in $\mathbf{Lat}_\wedge$ (where $\iota_{P},\iota_{Q}$ are the obvious embeddings), it is not the case that maps out of $P*Q$ given by the universal property in $\mathbf{Lat}_m$ will necessarily preserve meets.

\begin{remark}
One might expect the fusion to be the coproduct of $P$ and $Q$ in $\mathbf{Lat}_{m}$, but it is easy to construct examples to show this is not the case. However, this can be formulated as a coproduct if, in our category, we also restrict morphisms to preserve top and bottom elements, i.e. $\top \mapsto \top$ and $\perp \mapsto \perp$. In this case, $[1]$ would be initial, so we get a coproduct. 
\end{remark}

In order to determine the structure of $\Tr(P*Q)$ in terms of $\Tr(P)$ and $\Tr(Q)$, we will need the following lemma. In order to parse its statement, we need to extend the notion of transfer systems to posets that do not necessarily have meets. We make the convention here (as in \cite[Definition 2.3]{lifting}) that transfer systems on a poset $(P,\le)$ are partial orders $R$ on $P$ refining  $\le$  such that $x~R~y$ and $z\le y$ implies that for all maximal $w\le x,z$ we have $w~R~z$. Of course, when $x\wedge z$ exists in the above setup, it is the unique $w$ required to satisfy $w~R~z$, so this is compatible with the definition for lattices.

\begin{lemma}\label{lem:botelttsbij}
Transfer systems $R$ on a lattice $P$ such that ${\perp}~R~a$ for all $ a \in P$ are in bijection with transfer systems on $P \smallsetminus \{ \perp\}$.
\end{lemma}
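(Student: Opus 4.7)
The plan is to construct explicit inverse maps between the two sets. In one direction, given a transfer system $R$ on $P$ with $\perp~R~a$ for all $a\in P$, define $F(R) := R \cap ((P\smallsetminus\{\perp\})\times(P\smallsetminus\{\perp\}))$. In the other direction, given a transfer system $R'$ on $P\smallsetminus\{\perp\}$, define
\[
  G(R') := R' \cup \{(\perp,a)\mid a\in P\}.
\]
Once we verify that $F$ and $G$ land in the claimed sets, mutual inverseness is immediate: $F\circ G = \id$ holds because $G$ adds precisely the $\perp$-relations stripped by $F$, and $G\circ F = \id$ holds because the hypothesis $\perp~R~a$ means every pair involving $\perp$ is already of the form added by $G$.

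The first substantive check is that $F(R)$ is a transfer system on $P\smallsetminus\{\perp\}$. Reflexivity, antisymmetry, transitivity, and refinement of $\le$ are inherited from $R$. For restriction, suppose $x~F(R)~z$ and $y\le z$ with $x,y,z\in P\smallsetminus\{\perp\}$. If $x\wedge_P y \ne \perp$, then $x\wedge_P y$ is the unique maximal common lower bound of $x$ and $y$ in $P\smallsetminus\{\perp\}$ (any common lower bound in $P\smallsetminus\{\perp\}$ is $\le x\wedge_P y$ in $P$), and the restriction rule for $R$ gives $(x\wedge_P y)~R~y$, hence $(x\wedge_P y)~F(R)~y$. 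If $x\wedge_P y = \perp$, then there are no common lower bounds of $x$ and $y$ in $P\smallsetminus\{\perp\}$ at all, so the restriction condition (in the form recalled just before the lemma) is vacuously satisfied.

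Next I verify $G(R')$ is a transfer system on $P$ with the required $\perp$-property. The $\perp$-property holds by construction, and the partial order axioms are immediate since $\perp$ is already minimal in $P$. For restriction, suppose $x~G(R')~z$ and $y\le z$. If $x=\perp$ then $x\wedge y = \perp$ and $\perp~G(R')~y$ by construction; if $y=\perp$ then $x\wedge y = \perp = y$ and we use reflexivity. Otherwise $x,y,z\in P\smallsetminus\{\perp\}$ and $x~R'~z$. If $x\wedge_P y = \perp$ we invoke the $\perp$-clause again; if $x\wedge_P y \ne \perp$, then as in the previous paragraph $x\wedge_P y$ is the unique maximal common lower bound of $x,y$ in $P\smallsetminus\{\perp\}$, so the transfer-system axiom for $R'$ gives $(x\wedge_P y)~R'~y$, hence $(x\wedge_P y)~G(R')~y$.

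The main (mild) obstacle is keeping straight the two notions of meet and restriction, since $P\smallsetminus\{\perp\}$ need not be a lattice, which is exactly why the convention recalled before the lemma (phrased in terms of maximal common lower bounds) is needed. The essential observation making the argument go through in both directions is the dichotomy above: when $x\wedge_P y = \perp$ the restriction axiom on $P\smallsetminus\{\perp\}$ is vacuous, while when $x\wedge_P y \ne \perp$ it coincides with the lattice-theoretic meet in $P$. Everything else is bookkeeping.
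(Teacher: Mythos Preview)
Your proof is correct and follows exactly the same approach as the paper: both define the mutually inverse maps $R\mapsto R|_{P\smallsetminus\{\perp\}}$ and $Q\mapsto Q\cup\{(\perp,a)\mid a\in P\}$. The paper simply states these maps and leaves the verification that they are well-defined and mutually inverse to the reader, whereas you have carefully carried out those checks, in particular the dichotomy on whether $x\wedge_P y=\perp$ needed to handle restriction on the non-lattice $P\smallsetminus\{\perp\}$.
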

\begin{proof}
Let the former set of transfer systems be denoted $T$. Given a transfer system $Q$ on $P\smallsetminus \{\perp\}$, let $\tilde Q$ denote the same relations on $P$ along with all relations from $\perp$. The reader may check that the assignments
\[
\begin{aligned}
    T &\longleftrightarrow \Tr(P\smallsetminus\{\perp\})\\
    R &\longmapsto R|_{P\smallsetminus\{\perp\}}\\
    \tilde Q &\longmapsfrom Q
\end{aligned}
\]
are mutually inverse.
\end{proof}

We now come to the main theorem of this section, which enumerates transfer systems on the fusion $P*Q$ of finite lattices $P,Q$ in terms of smaller posets. Recall that, given a transfer system $R$ on $P$, the \emph{minimal fibrant} element of $P$ (relative to $R$) is the (necessarily unique) least element $a\in P$ such that $a~R~\top$. In other words, the minimal fibrant is $\chi^R(\top)$. We write $\Tr_a(P)$ for the collection of transfer systems on $P$ with minimal fibrant $a$.

\begin{theorem}\label{thm:fusion}
Let $P,Q$ be finite lattices with fusion lattice $P*Q$. Then
\begin{multline*}
  |\Tr(P*Q)|
  =|\Tr(P\smallsetminus \{\top\})||\Tr(Q\smallsetminus \{\top\})| +|\Tr(P\smallsetminus \{\perp\})| |\Tr(Q\smallsetminus \{\perp\})|\\
  +\sum_{a\in P\smallsetminus \{\top,\perp\}}|\Tr_a(P)||\Tr(Q\smallsetminus \{\top,\perp\})| 
  +\sum_{b\in Q\smallsetminus \{\top,\perp\}}|\Tr(P\smallsetminus \{\top,\perp\})| |\Tr_b(Q)|.
\end{multline*}
\end{theorem}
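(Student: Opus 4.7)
The plan is to stratify $\Tr(P*Q)$ by the value of the minimal fibrant $m := \chi^R(\top)$. Because $P \cap Q = \{\top, \perp\}$ and elements of $P\smallsetminus \{\top,\perp\}$ are incomparable in $P*Q$ to elements of $Q\smallsetminus\{\top,\perp\}$, the element $m$ lies in exactly one of the four disjoint sets $\{\top\}$, $\{\perp\}$, $P\smallsetminus\{\top,\perp\}$, $Q\smallsetminus\{\top,\perp\}$, and I expect these cases to contribute, in order, the four summands on the right-hand side. The key structural observation is that when $x,y$ both lie in $P$ (or both lie in $Q$), the meet $x\wedge y$ computed in $P*Q$ agrees with the meet in $P$ (resp.~$Q$), whereas the meet of an element of $P\smallsetminus\{\top,\perp\}$ with one of $Q\smallsetminus\{\top,\perp\}$ collapses to $\perp$. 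Consequently $R|_P$ and $R|_Q$ are genuine transfer systems on $P$ and $Q$, and $R$ is recovered from this pair together with any cross-relations forced by restriction.

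When $m=\top$, no non-identity relation of $R$ ends at $\top$, so $R$ decouples into an independent pair of transfer systems on $P\smallsetminus\{\top\}$ and $Q\smallsetminus\{\top\}$ (which share only $\perp$, and the relations out of $\perp$ are freely split between the two components). This yields Term~1. When $m=\perp$, the relation $\perp~R~\top$ together with restriction along every $a\le\top$ forces $\perp~R~a$ for every $a\in P*Q$; then $R|_P$ and $R|_Q$ are transfer systems with $\perp$ related to everything, and \autoref{lem:botelttsbij} identifies such pairs with $\Tr(P\smallsetminus\{\perp\})\times \Tr(Q\smallsetminus\{\perp\})$, giving Term~2.

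The main technical case, and the expected main obstacle, is $m=a\in P\smallsetminus\{\top,\perp\}$. Tautologically $R|_P\in \Tr_a(P)$. The crucial deduction is that $a~R~\top$ restricted along any $b\in Q\smallsetminus\{\top\}$ computes the meet $a\wedge b$ in $P*Q$, which is $\perp$ by the meet collapse noted above; hence $\perp~R~b$ for every such $b$. Simultaneously, minimality of $a$ forbids any $b\in Q\smallsetminus\{\top\}$ from $R$-relating to $\top$, since that would require $a\le b$, forcing $b=\top$. So $R|_Q$ is pinned: $\top$ is isolated and $\perp$ is related to every element of $Q\smallsetminus\{\top\}$. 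Discarding the isolated $\top$ and invoking (the evident extension to bounded posets of) \autoref{lem:botelttsbij} places such data in bijection with $\Tr(Q\smallsetminus\{\top,\perp\})$; summing over $a$ produces Term~3. The case $m\in Q\smallsetminus\{\top,\perp\}$ is completely symmetric and contributes Term~4.

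The remaining bookkeeping step is to verify that, conversely, any data of the four described types glues via the forced cross-relations (identity on $\top$, or $\perp$ related to everything, or the asymmetric pattern forced by a non-extremal $m$) to a genuine transfer system on $P*Q$. This reduces, via \autoref{prop:gen}, to checking restriction and transitivity in a handful of sub-cases; it is routine once one notes that every mixed meet in $P*Q$ collapses to $\perp$ and every relation is either purely in $P$, purely in $Q$, or among the forced cross-relations.
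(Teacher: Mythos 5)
Your proposal is correct and follows essentially the same route as the paper: stratify by the minimal fibrant $\chi^R(\top)$, observe that mixed meets in $P*Q$ collapse to $\perp$ so the restrictions to $P$ and $Q$ decouple, and invoke \autoref{lem:botelttsbij} to strip off the pinned extremal elements in each of the four cases. Your treatment is if anything slightly more careful than the paper's (e.g.\ in spelling out why minimality of a non-extremal fibrant $a\in P$ isolates $\top$ on the $Q$ side, and in flagging the converse gluing step), but there is no substantive difference in the argument.
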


\begin{proof}
We partition the enumeration by minimal fibrants. Suppose that $\top$ is the minimal fibrant for a transfer system on $P*Q$. Then as only $\top$ is related to itself, no relations in $P \smallsetminus \{\top\}$ can impose relations on $Q \smallsetminus \{\top\}$ by restriction. Thus we get all possible transfer systems by choosing transfer systems on $P \smallsetminus \{ \top \}$ and $Q \smallsetminus \{ \top \}$, accounting for the first term. Now suppose $\perp$ is the minimal fibrant for a transfer system on $P*Q$. By restriction, we then have that $\perp$ is related to all other elements. Then by \autoref{lem:botelttsbij}, the number of transfer systems is equal to $|Tr((P*Q) \smallsetminus \{\perp\})|$. As no element in $P \smallsetminus \{ \perp\}$ can be related to an element in $Q \smallsetminus \{\perp \}$ and vice versa, we can again choose transfer systems on the two and combine them, giving us the second term, $|\Tr(P\smallsetminus \{\perp\})||\Tr(Q\smallsetminus \{\perp\})|$.

Finally, assume the minimal fibrant is an element $a \in P \smallsetminus \{\top,\perp\}$. Since elements in $P \smallsetminus \{\top,\perp\}$ are never related to elements in $Q \smallsetminus \{\top,\perp\}$, we can pick a transfer system in $\Tr_a(P)$ arbitrarily. Then by restriction, $q~R~\perp$ for any $q \in Q \smallsetminus \{\top,\perp\}$, none of which are related to $\top$. Thus by \autoref{lem:botelttsbij}, we can pick $|\Tr(Q\smallsetminus \{\top,\perp\})|$ many transfer systems on this side. Summing over all possible $a$, we get the third term, $\sum_{a\in P\smallsetminus \{\top,\perp\}}^{}|\Tr_a(P)||\Tr(Q\smallsetminus \{\top,\perp\})|$. Repeating the process for a non-extremal minimal fibrant in $Q$ gives us the fourth term and thus the desired enumeration.
\end{proof}

Applying this theorem to the case of finite linear orders and invoking \cite[Theorem 20]{bbr} gives the following corollary. We write $[n] = \{0<1<\cdots<n\}$ for the finite total order on $n+1$ elements, and $\Cat(n) = \frac{1}{n+1}\binom{2n}{n}$ for the $n$-th Catalan number.

\begin{corollary}\label{cor:numbertsfusion}
For $m,n\ge 0$, the number of transfer systems on $[m]*[n]$ is
\begin{align*}
  |\Tr([m]*[n])|=2\operatorname{Cat}(n)\operatorname{Cat}(m)&+\operatorname{Cat}(n-1)(\operatorname{Cat}(m+1)-2\operatorname{Cat}(m))\\
  &+\operatorname{Cat}(m-1)(\operatorname{Cat}(n+1)-2\operatorname{Cat}(n)).
\end{align*}
In particular,
\[
  |\Tr([n]*[n])|=2\left(\operatorname{Cat}(n)^{2}+\operatorname{Cat}(n-1)(\operatorname{Cat}(n+1)-2\operatorname{Cat}(n))\right).
\]
\hfill\qedsymbol
\end{corollary}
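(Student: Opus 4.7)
The plan is to specialize \autoref{thm:fusion} to $P = [m]$ and $Q = [n]$ and reduce each ingredient to Catalan numbers via \cite[Theorem 20]{bbr}, which identifies $|\Tr([k])|$ with $\Cat(k+1)$.

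First I would record the order isomorphisms $[m]\smallsetminus\{\top\} \cong [m-1]$, $[m]\smallsetminus\{\perp\} \cong [m-1]$, and $[m]\smallsetminus\{\top,\perp\} \cong [m-2]$, and the analogous facts for $[n]$. Combined with \cite[Theorem 20]{bbr}, this gives $|\Tr([m]\smallsetminus\{\top\})| = |\Tr([m]\smallsetminus\{\perp\})| = \Cat(m)$ and $|\Tr([m]\smallsetminus\{\top,\perp\})| = \Cat(m-1)$, and likewise for $[n]$. These evaluations dispatch the first two terms in \autoref{thm:fusion}, contributing $2\Cat(m)\Cat(n)$ in total.

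The substantive step is evaluating the sum $\sum_{a \in [m]\smallsetminus\{\top,\perp\}}|\Tr_a([m])|$. Because the minimal fibrant $\chi^R(\top)$ stratifies $\Tr([m])$ by fibers indexed by elements of $[m]$, this sum equals $\Cat(m+1) - |\Tr_\top([m])| - |\Tr_\perp([m])|$. A transfer system on $[m]$ has minimal fibrant $\top$ exactly when no proper element is $R$-related to $\top$, so such systems are in bijection with $\Tr([m-1])$, yielding $|\Tr_\top([m])| = \Cat(m)$. For $|\Tr_\perp([m])|$, the relation $\perp~R~\top$ forces $\perp~R~x$ for every $x \in [m]$ by restriction, and \autoref{lem:botelttsbij} then puts such transfer systems in bijection with $\Tr([m]\smallsetminus\{\perp\})$, giving $|\Tr_\perp([m])| = \Cat(m)$ as well. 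Thus the third term in \autoref{thm:fusion} evaluates to $\Cat(n-1)\bigl(\Cat(m+1) - 2\Cat(m)\bigr)$, and by the symmetric argument applied to $[n]$ the fourth term reduces to $\Cat(m-1)\bigl(\Cat(n+1) - 2\Cat(n)\bigr)$.

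Summing the four contributions yields the stated formula, and the $m = n$ case is then immediate by collecting symmetric terms. The only step beyond routine bookkeeping is the computation of the two extremal minimal-fibrant counts $|\Tr_\top([m])|$ and $|\Tr_\perp([m])|$, but both are direct consequences of the restriction axiom together with \autoref{lem:botelttsbij}; everything else is a matter of substituting Catalan values into \autoref{thm:fusion}.
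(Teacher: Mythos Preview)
Your proposal is correct and follows exactly the route the paper intends: the paper offers no detailed proof, merely stating that the corollary follows by applying \autoref{thm:fusion} to finite total orders and invoking \cite[Theorem 20]{bbr}, and you have simply filled in those details. In particular, your evaluation of $\sum_{a\notin\{\top,\perp\}}|\Tr_a([m])|$ via the stratification by minimal fibrant and the identifications $|\Tr_\top([m])|=|\Tr_\perp([m])|=\Cat(m)$ is the expected computation.
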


\begin{remark}
As a final comment on categorical results, we note that $\Tr$ does not take products to products, but one may check that the canonical map $\varphi\colon \Tr(P\times Q)\to \Tr(P)\times \Tr(Q)$ is split by
\[
\begin{aligned}
  \psi:\operatorname{Tr}(P)\times\operatorname{Tr}(Q) &\longrightarrow \operatorname{Tr}(P \times Q)\\
  (R,T)&\longmapsto \{((p,q),(p',q'))\mid p~R~p' \text{ and } q~T~q'\}.
\end{aligned}
\]
Beware, though, that a similar construction does not work for fusion: there is no canonical map $\Tr(P*Q)\to \Tr(P)*\Tr(Q)$. Given the difficulty of handling colimits in lattice categories, it remains unclear to us whether $\Tr$ might be a left adjoint.
\end{remark}

\section{Transfer systems for rank two elementary Abelian groups}\label{sec:CpCp}

We can now apply our investigations to a case of interest in equivariant homotopy theory. Let $G = C_p\times C_p$ be a rank $2$ elementary Abelian $p$-group ($p$ prime). The subgroup lattice for $G$ is isomorphic to an iterated fusion of $[2] = \{0<1<2\}$ with itself as illustrated in \autoref{fig:CpCp}. This fact is well-known, but we include a proof for completeness.

\begin{lemma}\label{lem:cpcpisfusion}
The subgroup lattice of $C_p\times C_p$ is isomorphic to the $(p+1)$-fold iterated fusion of $[2]$ with itself, \emph{i.e.},
\[
  \operatorname{Sub}(C_{p}\times C_{p})\cong [2]^{*(p+1)}.
\]
\end{lemma}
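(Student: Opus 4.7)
The plan is to exploit the vector space structure on $C_{p}\times C_{p}$: viewed as a two-dimensional $\mathbb{F}_{p}$-vector space, its subgroups are exactly its $\mathbb{F}_{p}$-subspaces, which by dimension fall into three classes. The zero subspace contributes the trivial subgroup $\{e\}$ (the bottom $\perp$), the whole space contributes $C_{p}\times C_{p}$ (the top $\top$), and the one-dimensional subspaces contribute the proper nontrivial subgroups. A standard count of nonzero vectors modulo nonzero scalars gives $(p^{2}-1)/(p-1)=p+1$ one-dimensional subspaces, and for any two distinct such subspaces $H\ne K$, one has $H\cap K=\{0\}$ and $H+K=C_{p}\times C_{p}$ by dimension. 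Hence the $p+1$ subgroups of order $p$ are pairwise incomparable in $\Sub(C_{p}\times C_{p})$, each covers $\perp$, and each is covered by $\top$.

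Next, I would unpack the structure of the iterated fusion $[2]^{*(p+1)}$. Using associativity of $*$ (justified by the symmetric monoidal structure on $(\mathbf{Lat}_{m},*,[1])$ recorded earlier) together with \autoref{defn:fusion}, iterated fusion produces a bounded lattice with a single bottom $\perp$, a single top $\top$, and $p+1$ middle elements $m_{1},\ldots,m_{p+1}$, where each $m_{i}$ satisfies only $\perp<m_{i}<\top$. No relations appear between $m_{i}$ and $m_{j}$ for $i\ne j$, because the defining order $\le_{P*Q}=\le_{P}\cup\le_{Q}$ never compares non-extremal elements coming from different summands.

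Finally, I would produce the isomorphism $\varphi\colon [2]^{*(p+1)}\to\Sub(C_{p}\times C_{p})$ by sending $\perp$ to $\{e\}$, $\top$ to $C_{p}\times C_{p}$, and choosing any bijection between $\{m_{1},\ldots,m_{p+1}\}$ and the set of order-$p$ subgroups. Since the covering relations match on the two sides and both lattices have the same underlying Hasse diagram, $\varphi$ is an order isomorphism and hence a lattice isomorphism. The main (mild) obstacle is simply verifying incomparability of middle elements on both sides: Lagrange's theorem rules out proper containments among order-$p$ subgroups, and the explicit description of the fusion order rules out unintended comparisons between distinct $m_{i},m_{j}$.
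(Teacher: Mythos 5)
Your proposal is correct and follows essentially the same route as the paper: count the $(p^2-1)/(p-1)=p+1$ order-$p$ subgroups, note they are pairwise incomparable, and match covering relations with those of $[2]^{*(p+1)}$. The linear-algebra framing and the explicit unpacking of the iterated fusion are just more detailed versions of the same argument.
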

\begin{proof}
Each strict nontrivial subgroup $H$ is cyclic of order $p$ and generated by any of its $p-1$ nontrivial elements of $C_p\times C_p$. Thus the total number of such subgroups is $(p^2-1)/(p-1) = p+1$.

The only covering relations in the subgroup lattice are the inclusions of the trivial subgroup $e$ into any of these $p+1$ subgroups, and the inclusion of strict nontrivial subgroups into $C_p\times C_p$. Thus the subgroup lattice is isomorphic to $[2]^{*(p+1)}$.
\end{proof}

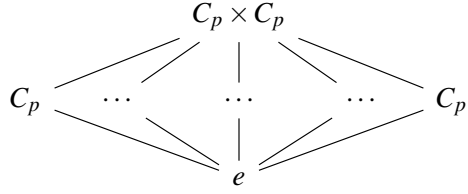
\begin{figure}
    \centering
    \begin{tikzcd}[row sep=1.2em, column sep=1.2em]
    & & C_{p}\times C_{p}\\
    C_p \arrow[urr,dash] &
    \cdots \arrow[ur,dash] &
    \cdots \arrow[u,dash] &
    \cdots \arrow[ul,dash] &
    C_p \arrow[ull,dash]\\
    & & e \arrow[ul, dash]  \arrow[ur,dash] \arrow[ull, dash]  \arrow[urr,dash] \arrow[u,dash]
    \end{tikzcd}
    \caption{The subgroup lattice of $C_p\times C_p$}
    \label{fig:CpCp}
\end{figure}

We now employ \autoref{thm:fusion} to determine the number of transfer systems on such lattices.

\begin{prop}\label{prop:interatedfusion}
For $n\ge 0$, the number of transfer systems on the $n$-fold iterated fusion of $[2]$ with itself is
\[
  |\Tr([2]^{*n})|=2^{n+1}+n
\]
where $[2]^{*0} = [1]$ and $[2]^{*1} = [2]$. Furthermore, the isomorphism type of the lattice $\Tr([2]^{*n})$ is pictured in \autoref{fig:TrCpCp} and may be described as a ``bottom'' $n$-cube $B\cong [1]^n$, a ``middle'' discrete set of $n$-elements $M$, and a ``top'' $n$-cube $T\cong [1]^n$ where the only covering relations not internal to one of the $n$-cubes are of the following forms:
\begin{enumerate}[(i)]
    \item each element of $B$ covered by $\max B$ is also covered by exactly one element of $M$,
    \item each element of $T$ covering $\min T$ also covers exactly one element of $M$,
    \item $\min T$ covers $\max B$.
\end{enumerate}
\end{prop}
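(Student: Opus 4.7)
The plan is to directly parametrize and classify transfer systems on $L_n := [2]^{*n}$. The lattice $L_n$ consists of $\bot$, $\top$, and $n$ pairwise-incomparable middle elements $a_1, \ldots, a_n$ with $\bot < a_i < \top$, satisfying $a_i \wedge a_j = \bot$ and $a_i \vee a_j = \top$ for $i \neq j$. Each transfer system $R$ on $L_n$ is encoded by a triple $(r, s, t) \in \{0,1\}^n \times \{0,1\}^n \times \{0,1\}$, where $r_i$, $s_i$, and $t$ indicate whether $\bot \mathrel{R} a_i$, $a_i \mathrel{R} \top$, and $\bot \mathrel{R} \top$ respectively.

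I would first derive the closure constraints on such triples: restriction of $a_i \mathrel{R} \top$ along $a_j \leq \top$ (for $j \neq i$) gives (C1) $s_i = 1 \Rightarrow r_j = 1$; restriction of $\bot \mathrel{R} \top$ along $a_i \leq \top$ gives (C2) $t = 1 \Rightarrow r_i = 1$ for all $i$; and transitivity gives (C3) $r_i = s_i = 1 \Rightarrow t = 1$. By \autoref{prop:gen}, valid transfer systems correspond exactly to triples satisfying (C1)--(C3). Partitioning by $(|s|, t)$, every valid triple lies in the \emph{bottom cube} $B = \{(r, \mathbf{0}, 0) : r \in \{0,1\}^n\}$, the \emph{middle layer} $M$ of $n$ triples with $s = e_{i_0}$, $t = 0$, $r_{i_0} = 0$, and $r_j = 1$ for $j \neq i_0$, or the \emph{top cube} $T = \{(\mathbf{1}, s, 1) : s \in \{0,1\}^n\}$; any other combination (e.g., $|s| \geq 2$ with $t = 0$, or $|s| = 1$ with $r_{i_0} = 1$ and $t = 0$) is ruled out by (C1) and (C3). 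The counts $|B| = |T| = 2^n$ and $|M| = n$ yield $|\Tr(L_n)| = 2^{n+1} + n$.

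For the lattice structure, refinement restricts to the coordinate order on $r$ in $B$ and on $s$ in $T$, so each is $[1]^n$; $M$ is an antichain since $R_{i_0}$ and $R_{i_1}$ (with $i_0 \neq i_1$) contain the incomparable relations $a_{i_0} \mathrel{R} \top$ and $\bot \mathrel{R} a_{i_0}$ respectively. To identify all cross-category covers, I would check each pair-type. For $B$-to-$T$ covers: $R^B \leq R^T$ always, and the only cover is $\max B \lessdot \min T$, since any $R^B < \max B$ yields the intermediate $\max B$, and any $R^T > \min T$ admits an intermediate in $T$ obtained by removing a generator from $s$; this gives (iii). For $B$-to-$M$: $R^B \leq R_{i_0}$ requires $r^B_{i_0} = 0$, and the unique cover is $R^B_{i_0}$ with $r_j = 1$ for $j \neq i_0$, since closing $R^B_{i_0} \cup \{a_{i_0} \mathrel{R} \top\}$ produces $R_{i_0}$ exactly, while smaller $R^B$'s admit intermediates $R^B \cup \{\bot \mathrel{R} a_j\} \in B$; this gives (i). For $M$-to-$T$: $R_{i_0} \leq R^T$ requires $s_{i_0} = 1$, and the unique cover is $R^T_{\{i_0\}}$ with $s = e_{i_0}$; adding $\bot \mathrel{R} a_{i_0}$ or $\bot \mathrel{R} \top$ alone to $R_{i_0}$ forces the other via (C3) or (C2), and any $R^T$ with $|s| \geq 2$ admits the intermediate obtained by restricting $s$ to $\{i_0\}$; this gives (ii).

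The main obstacle is the bookkeeping in the cross-category cover analysis: ensuring that no spurious cover is missed requires producing an explicit intermediate triple satisfying (C1)--(C3) in each ruled-out case. The coupling between $\bot \mathrel{R} \top$ and the $\bot \mathrel{R} a_i$ imposed by (C2) and (C3) is what makes these intermediate triples easy to construct and is the mechanism underlying the rigid $B$-$M$-$T$ stratification of $\Tr(L_n)$.
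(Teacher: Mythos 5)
Your proof is correct, but it takes a genuinely different route from the paper. You classify transfer systems on $[2]^{*n}$ from scratch: since the only non-reflexive relations available are $\perp~R~a_i$, $a_i~R~\top$, and $\perp~R~\top$, you encode each candidate as a triple $(r,s,t)$ and show that the restriction and transitivity axioms reduce exactly to your constraints (C1)--(C3), from which the $B$/$M$/$T$ trichotomy, the count $2^{n+1}+n$, and the covering relations (i)--(iii) all follow by direct inspection of coordinates. The paper instead obtains the count by specializing the fusion recursion of \autoref{thm:fusion} to $[2]^{*(n+1)} = [2]^{*n}*[2]$ (after observing $|\Tr_a([2]^{*n})|=1$ for non-extremal $a$), and then \emph{describes} the sets $B$, $M$, $T$ and uses the already-established count to conclude that they exhaust $\Tr([2]^{*n})$; your argument needs no such counting step because the case analysis on $(|s|,t)$ is itself exhaustive. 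Your approach is more elementary and self-contained --- it bypasses \autoref{thm:fusion} and \autoref{lem:botelttsbij} entirely and makes the cover analysis transparent (e.g., the $M$-to-$T$ covers differ in two coordinates, with both intermediate triples killed by (C2) and (C3)) --- whereas the paper's route is designed to showcase the fusion recursion, which scales to fusions of lattices more complicated than $[2]$ where a hands-on parametrization would be infeasible. Two minor remarks: your appeal to \autoref{prop:gen} is not really needed (sufficiency of (C1)--(C3) is just a direct check of the axioms, since the only length-two chains are $\perp < a_i < \top$); and when you argue $M$ is an antichain, the cleaner statement is that $R_{i_0}$ contains $a_{i_0}~R~\top$ but not $\perp~R~a_{i_0}$ while $R_{i_1}$ contains the reverse, which is what your coordinates say in any case.
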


\begin{figure}
    \newcommand{\trcv}[2]{
    \draw[line width=0.1mm] (#1) to[out=140,in=-140] (#2);
}

\newcommand{\fusion}{
        \coordinate (000) at (0:0); 
        \coordinate (100) at (135:1); 
        \coordinate (001) at (45:1); 
        \coordinate (010) at (90:{sqrt(2)/2}); 
        \coordinate (111) at ($(100) + (001)$);
    
        \foreach \i in {(000),(100),(010),(001),(111)}{
            \draw[white, fill =white] \i circle (.1);
            \draw[black,fill = black] \i circle (.035);
        }
}

\newcommand{\trlp}[2]{
    \draw[line width=0.1mm] (#1)--(#2);
}

\newcommand{\betikzz}[1]{
    \begin{tikzpicture}[scale = 0.7]
        #1
    \end{tikzpicture}
}

\begin{center}
\adjustbox{scale=0.9,center}{
    \begin{tikzcd}[row sep=2em, column sep=6em]
    & 
    \begin{bmatrix}\betikzz{\fusion \trlp{000}{010}\trlp{010}{111}\trlp{000}{001}\trlp{100}{111}\trlp{000}{100}\trlp{001}{111}\trcv{000}{111}}\end{bmatrix}&\vphantom{\Bigg|}\ar[ddd, start anchor = north, end anchor = south,xshift=3em,no head,decorate,decoration = {brace},very thick,"T" right = 3pt]\\

    \begin{bmatrix}\betikzz{\fusion \trlp{000}{010}\trlp{000}{100}\trlp{001}{111}\trlp{000}{001}\trlp{010}{111}\trcv{000}{111}}\end{bmatrix} \arrow[ur]&
    \begin{bmatrix}\betikzz{\fusion \trlp{000}{100}\trlp{000}{001}\trlp{000}{010}\trlp{100}{111}\trlp{001}{111}\trcv{000}{111}}\end{bmatrix}\arrow[u] &
    \begin{bmatrix}\betikzz{\fusion \trlp{000}{010}\trlp{000}{001}\trlp{100}{111}\trlp{000}{100}\trlp{010}{111}\trcv{000}{111}}\end{bmatrix} \arrow[ul]\\

    \begin{bmatrix}\betikzz{\fusion \trlp{000}{010}\trlp{000}{100}\trlp{001}{111}\trlp{000}{001}\trcv{000}{111}}\end{bmatrix} \arrow[ur] \arrow[u]&
    \begin{bmatrix}\betikzz{\fusion \trlp{000}{100}\trlp{000}{001}\trlp{010}{111}\trlp{000}{010}\trcv{000}{111}}\end{bmatrix}\arrow[ur] \arrow[ul] &
    \begin{bmatrix}\betikzz{\fusion \trlp{000}{010}\trlp{000}{001}\trlp{100}{111}\trlp{000}{100}\trcv{000}{111}}\end{bmatrix} \arrow[ul] \arrow[u]\\

    & 
    \begin{bmatrix}\betikzz{\fusion \trlp{000}{001}\trlp{000}{100}\trlp{000}{010}\trcv{000}{111}}\end{bmatrix} \arrow[ul] \arrow[u] \arrow[ur]&\vphantom{Mid}\\

    \begin{bmatrix}\betikzz{\fusion \trlp{000}{010}\trlp{000}{100}\trlp{001}{111}}\end{bmatrix} \arrow[uu] &
    \begin{bmatrix}\betikzz{\fusion \trlp{000}{100}\trlp{000}{001}\trlp{010}{111}}\end{bmatrix} \arrow[uu,bend left=45]&
    \begin{bmatrix}\betikzz{\fusion \trlp{000}{010}\trlp{000}{001}\trlp{100}{111}}\end{bmatrix} \arrow[uu]&[-70pt]\left.\begin{matrix}\textcolor{white}{0}\\\textcolor{white}{0}\\\textcolor{white}{0}\\\textcolor{white}{0}\\\textcolor{white}{0}\\\textcolor{white}{0}\end{matrix}\right\}\!\!\textcolor{white}{a}_M\\ 

    & 
    \begin{bmatrix}\betikzz{\fusion \trlp{000}{001}\trlp{000}{100}\trlp{000}{010}}\end{bmatrix} \arrow[uu,bend right=45] & \vphantom{a}\ar[ddd, start anchor = north, end anchor = south,xshift=3em,no head,decorate,decoration = {brace},"B" right = 3pt,very thick]\\

    \begin{bmatrix}\betikzz{\fusion \trlp{000}{001}\trlp{000}{010}}\end{bmatrix} \arrow[uu] \arrow[ur] &
    \begin{bmatrix}\betikzz{\fusion \trlp{000}{001}\trlp{000}{100}}\end{bmatrix}\arrow[u] \arrow[uu,bend left=45]&
    \begin{bmatrix}\betikzz{\fusion \trlp{000}{010}\trlp{000}{001}}\end{bmatrix} \arrow[uu] \arrow[ul]\\

    \begin{bmatrix}\betikzz{\fusion \trlp{000}{100}}\end{bmatrix} \arrow[ur] \arrow[u]&
    \begin{bmatrix}\betikzz{\fusion \trlp{000}{010}}\end{bmatrix} \arrow[ul] \arrow[ur]&
    \begin{bmatrix}\betikzz{\fusion \trlp{000}{001}}\end{bmatrix} \arrow[ul] \arrow[u]\\

    &
    \begin{bmatrix}\betikzz{\fusion}\end{bmatrix} \arrow[ur] \arrow[u] \arrow[ul] & \vphantom{ASDF}
\end{tikzcd}
}
\end{center}
    \caption{The Hasse diagram for $\Tr([2]^{*3})\cong \Tr(C_2\times C_2)$. Compare with the Hasse diagram for saturated covers in \autoref{fig:satcovers}.}
    \label{fig:TrCpCp}
\end{figure}

\begin{remark}
The $n=2,3$ cases of \autoref{prop:interatedfusion} appear in \cite[Figure 2]{Rubin}; the $n>3$ cases are new.
\end{remark}

\begin{proof}
For $a\in [2]^{*n}\smallsetminus \{\top,\perp\}$, note that $|\Tr_a([2]^{*n})| = 1$. Indeed, the only way a non-extremal element can be minimal fibrant for a transfer system $R$ on $[2]^{*n}$ is if $a~R~\top$ and $\perp~R~b$ for all non-extremal $b\ne a$, and there are no other non-reflexive relations. Also note that $1$ is the only non-extremal element of $[2]$, and $|\Tr([2]^{*n}\smallsetminus\{\top,\perp\})| = 1$, $|\Tr_1([2])| = 1$.

These observations allow us to specialize \autoref{thm:fusion} to get
\[
\begin{aligned}
    |\Tr([2]^{*(n+1)}| &= |\Tr([2]^{*n}*[2])| \\
    &= 2|\Tr([2]^{*n}\smallsetminus \{\top\}|+2|\Tr([2]^{*n}\smallsetminus \{\perp\})| + n+1\\
    &= 2\cdot 2^n + 2\cdot 2^n + n + 1\\
    &= 2^{n+2}+n+1
\end{aligned}
\]
where the third line holds by observing that there are $n$ independent choices of non-reflexive relations for a transfer system on $[2]^{*n}$ without one of its extremal elements. For $n\ge 1$ this is equivalent to the formula in the proposition, and small values follow by inspection.

It remains to specify the lattice structure of $\Tr([2]^{*n})$. The bottom cube $B$ consists of transfer systems whose non-reflexive relations are a subset of relations of the form $\perp~R~a$ where $a\ne \perp,\top$. The top cube $T$ consists of transfer systems with all relations $\perp~R~a$, the relation $\perp~R~\top$, and a subset of relations $a~R~\top$ (where $a\ne \perp,\top$ throughout). The intermediate transfer systems $M$ have exactly one relation $a~R~\top$ (for $a\ne \perp,\top$) and all relations $\perp~R~b$ for $b\ne a,\perp,\top$.

The sets $B,T,M$ are clearly disjoint and $|B\cup T\cup M| = 2^n+2^n+n = 2^{n+1}+n$, so this specifies all elements of $\Tr([2]^{*n})$. The covering relations (i)--(iii) now follow directly from our description of the transfer systems.
\end{proof}

The main theorem of this section is now an immediate corollary of \autoref{lem:cpcpisfusion} and \autoref{prop:interatedfusion}.

\begin{theorem}\label{thm:ranktwo}
Let $G = C_p\times C_p$ for $p$ a prime number. Then there are exactly $2^{p+2}+p+1$ $G$-transfer systems, and the lattice $\Tr(G)$ takes the form described in \autoref{prop:interatedfusion} with $n=p+1$.\hfill\qedsymbol
\end{theorem}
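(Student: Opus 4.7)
The plan is to deduce the theorem as an immediate corollary of two facts already proved in this section: first, the isomorphism $\Sub(C_p \times C_p) \cong [2]^{*(p+1)}$ from \autoref{lem:cpcpisfusion}, and second, the explicit structure and cardinality of $\Tr([2]^{*n})$ given by \autoref{prop:interatedfusion}. Since $C_p \times C_p$ is abelian, every subgroup is normal, so the conjugation-closure condition in \autoref{defn:tr} is automatically satisfied and $\Tr(C_p \times C_p)$ coincides with the lattice of transfer systems on $\Sub(C_p \times C_p)$. Specializing \autoref{prop:interatedfusion} at $n = p+1$ then yields both the count $|\Tr(C_p \times C_p)| = 2^{p+2} + p + 1$ and the Boolean-plus-middle-layer description.

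For the subgroup-lattice step, I would quickly recap the argument: every proper nontrivial subgroup of $C_p \times C_p$ is cyclic of order $p$, and each is generated by any of its $p-1$ nontrivial elements. Since $C_p \times C_p$ contains $p^2 - 1$ nontrivial elements, the number of such subgroups equals $(p^2-1)/(p-1) = p+1$. The only covering relations run from the trivial subgroup up to one of these $p+1$ intermediate subgroups, or from one of them up to the whole group, which is exactly the Hasse diagram of the iterated fusion $[2]^{*(p+1)}$. Invoking \autoref{lem:cpcpisfusion} packages this cleanly.

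The real substance is not in this corollary but in the preceding \autoref{prop:interatedfusion}, which in turn rests on the fusion recursion \autoref{thm:fusion}. The key simplification that powers that recursion — and that I would emphasize as the single non-obvious input — is the observation that $|\Tr_a([2]^{*n})| = 1$ for each non-extremal $a$: fixing a non-extremal minimal fibrant $a$ forces exactly the relation $a \mathrel{R} \top$ together with $\perp \mathrel{R} b$ for every non-extremal $b \neq a$, and permits no other non-reflexive relations. Combined with the equally easy observation $|\Tr([2]^{*n} \smallsetminus \{\top,\perp\})| = 1$, this collapses the four-term formula of \autoref{thm:fusion} to $|\Tr([2]^{*(n+1)})| = 2 \cdot 2^n + 2 \cdot 2^n + (n+1) = 2^{n+2} + n + 1$, which after reindexing with $n = p+1$ is precisely the claimed count. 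The lattice-structure assertion then follows by partitioning transfer systems on $[2]^{*(p+1)}$ according to whether the minimal fibrant is $\perp$ (giving the bottom cube $B$), is some non-extremal element (giving the middle antichain $M$), or is $\top$ with $\perp \mathrel R \top$ present (giving the top cube $T$), with the covering relations (i)–(iii) checked directly from \autoref{prop:gen}.
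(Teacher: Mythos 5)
Your proposal is correct and follows the paper's route exactly: the paper also derives \autoref{thm:ranktwo} as an immediate corollary of \autoref{lem:cpcpisfusion} and \autoref{prop:interatedfusion}, with the counting and lattice-structure work all residing in the latter. Your added remark that conjugation-closure is automatic since $C_p\times C_p$ is Abelian is a small point the paper leaves implicit, but it does not change the argument.
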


\begin{example}
We may also illustrate the action of Hill's characteristic map on $\Tr([2]^{*n})$ and highlight the saturated transfer systems (modular matchstick games) within this structure. First note that all elements of $B$ and $M$ are both saturated and are minimal elements of fibers of $\chi$ according to \autoref{thm:minchi}. As such, these transfer systems represent the singleton fibers of $\chi\colon \Tr([2]^{*n})\to \End^\circ([2]^{*n})$. The only additional fiber of $\chi$ is the top cube $T$, which is the preimage of the interior operator taking the constant value $\perp$. In particular, there are exactly $2^n+n+1$ saturated transfer systems on $[2]^{*n}$, and this is also the number of interior operators on $[2]^{*n}$. As an antitone function, $\chi\colon \Tr([2]^{*n})\to \End^\circ([2]^{*n})$ collapses $T$ to the minimal interior operator (constant on $\perp$) and is an order-reversing bijection away from $T$.
\end{example}

\bibliography{chi_matchsticks}

\newcommand{\etalchar}[1]{$^{#1}$}
\begin{thebibliography}{HMOO22}

\bibitem[BBR21]{bbr}
S.~Balchin, D.~Barnes, and C.~Roitzheim.
\newblock {$N_\infty$}-operads and associahedra.
\newblock {\em Pacific J. Math.}, 315(2):285--304, 2021.

\bibitem[BH15]{BlumbergHill}
A.J. Blumberg and M.A. Hill.
\newblock Operadic multiplications in equivariant spectra, norms, and
  transfers.
\newblock {\em Adv. Math.}, 285:658--708, 2015.

\bibitem[Bly05]{blyth}
T.~S. Blyth.
\newblock {\em Lattices and ordered algebraic structures}.
\newblock Universitext. Springer-Verlag London, Ltd., London, 2005.

\bibitem[BMO22a]{closure}
S.~Balchin, E.~MacBrough, and K.~Ormsby.
\newblock Closure operators and cosaturated transfer systems.
\newblock 2022.
\newblock unpublished note.

\bibitem[BMO22b]{bmo:CD}
S.~Balchin, E.~MacBrough, and K.~Ormsby.
\newblock The combinatorics of ${N}_\infty$ operads for ${C}_{qp^n}$ and
  ${D}_{p^n}$.
\newblock 2022.
\newblock arXiv:2209.06992.

\bibitem[BMO23]{lifting}
S.~Balchin, E.~MacBrough, and K.~Ormsby.
\newblock Lifting {$N_\infty$} operads from conjugacy data.
\newblock 2023.
\newblock accepted in Tunisioan J.~Math., arXiv:2209.06798v2.

\bibitem[BOOR22]{boor}
S.~Balchin, K.~Ormsby, A.M. Osorno, and C.~Roitzheim.
\newblock Model structures on finite total orders.
\newblock accepted in Math.~Zeit., arXiv:2109.07803v2, 2022.

\bibitem[BP21]{bp21}
P.~Bonventre and L.A. Pereira.
\newblock Genuine equivariant operads.
\newblock {\em Adv. Math.}, 381:Paper No. 107502, 133, 2021.

\bibitem[DP02]{davey_priestley_2002}
B.A. Davey and H.A. Priestley.
\newblock {\em Introduction to Lattices and Order}.
\newblock Cambridge University Press, 2 edition, 2002.

\bibitem[FOO{\etalchar{+}}22]{fooqw}
E.E. Franchere, K.~Ormsby, A.M. Osorno, W.~Qin, and R.~Waugh.
\newblock Self-duality of the lattice of transfer systems via weak
  factorization systems.
\newblock to appear in Homology, Homotopy, and Applications, 2022.

\bibitem[GW18]{gw18}
J.J. Guti\'{e}rrez and D.~White.
\newblock Encoding equivariant commutativity via operads.
\newblock {\em Algebr. Geom. Topol.}, 18(5):2919--2962, 2018.

\bibitem[HHR16]{hhr}
M.A. Hill, M.J. Hopkins, and D.C. Ravenel.
\newblock On the nonexistence of elements of {K}ervaire invariant one.
\newblock {\em Ann. of Math. (2)}, 184(1):1--262, 2016.

\bibitem[HMOO22]{hmoo}
U.~Hafeez, P.~Marcus, K.~Ormsby, and A.M. Osorno.
\newblock Saturated and linear isometric transfer systems for cyclic groups of
  order {$p^mq^n$}.
\newblock {\em Topology Appl.}, 317:Paper No. 108162, 2022.

\bibitem[Kle76]{Kleitman}
D.J. Kleitman.
\newblock Extremal properties of collections of subsets containing no two sets
  and their union.
\newblock {\em J. Combinatorial Theory Ser. A}, 20(3):390--392, 1976.

\bibitem[Rub21]{Rubin}
J.~Rubin.
\newblock Detecting {S}teiner and linear isometries operads.
\newblock {\em Glasg. Math. J.}, 63(2):307--342, 2021.

\end{thebibliography}
\bibliographystyle{alpha}

\end{document}